\theoremstyle{plain} 
\newtheorem{theorem}{\indent\sc Theorem}[section]
\newtheorem{lemma}[theorem]{\indent\sc Lemma}
\newtheorem{proposition}[theorem]{\indent\sc Proposition}
\theoremstyle{definition} 
\newtheorem{definition}[theorem]{\indent\sc Definition}
\newtheorem{remark}[theorem]{\indent\sc Remark}
\newtheorem{example}[theorem]{\indent\sc Example}
\newtheorem*{claim0}{\indent\sc Claim}
\numberwithin{equation}{section}
\newcommand{\R}{\mathbb{R}}
\newcommand{\N}{\mathbb{N}}
\newcommand{\cl}{\operatorname{cl}}
\newcommand{\relmiddle}[1]{\mathrel{}\middle#1\mathrel{}}
\crefname{theorem}{Theorem}{Theorems}
\crefname{lemma}{Lemma}{Lemma}
\crefname{corollary}{Corollary}{Corollaries}
\crefname{proposition}{Proposition}{Propositions}
\crefname{example}{Example}{Examples}
\crefname{section}{Section}{Sections}
\crefname{definition}{Definition}{Definition}
\begin{document}

\title[Horoboundaries of coarsely convex spaces]{Horoboundaries of coarsely convex spaces} 

\author[Ikkei Sato]{Ikkei Sato}


\renewcommand{\thefootnote}{\fnsymbol{footnote}}

\keywords{ coarse geometry, coarsely convex space, horoboundary
}





\address{Ikkei Sato \endgraf
Department of Mathematical Sciences \endgraf
Tokyo Metropolitan University \endgraf
Minami-osawa Hachioji Tokyo 192-0397 \endgraf
Japan
}
\email{sato-ikkei@tmu.ac.jp}


\begin{abstract}
A horoboundary is one of the attempts to compactify metric spaces, and is constructed using continuous functions on metric spaces. It is a concept that includes global information of metric spaces, and its correspondence with an ideal boundary constructed using geodesics has been studied in nonpositive curvature spaces such as CAT(0) spaces and geodesic Gromov hyperbolic spaces. We will introduce a certain correspondence between the horoboundary and the ideal boundary of coarsely convex spaces, which can be regarded as a generalization of spaces of nonpositive curvature.
\end{abstract}

\maketitle

\tableofcontents







\section{Introduction}
\subsection{Introduction}
Coarse geometry is the field of studying invariant properties of metric
spaces under quasi-isometry.  
The geodesic Gromov
hyperbolic spaces introduced by Gromov are invariant under
quasi-isometry and have been studied as the coarse geometric analogues
of Riemannian manifolds with negative sectional curvature.
Subsequently, the construction of analogues in the metric geometry of
Riemannian manifolds with nonpositive sectional curvature was advanced,
and CAT(0) spaces and Busemann spaces were introduced. However, these
spaces are not invariant under quasi-isometry. 
Coarsely convex spaces introduced by
Fukaya and Oguni in \cite{FO17} are coarse geometric analogues of simply
connected Riemannian manifolds of nonpositive sectional curvature. They
are a class that includes not only CAT(0) spaces and Busemann spaces,
but also geodesic hyperbolic spaces, finite-dimensional systolic
complexes, proper injective spaces.  Furthermore, in coarsely
convex spaces, as in other non-positive curvature spaces, ideal
boundaries are defined by using (quasi-)geodesic rays, the coarse
Cartan-Hadamard theorem, which states that the open cones defined on the
ideal boundary and the coarsely convex spaces themselves are coarse
homotopy equivalence, and coarsely convex spaces satisfy various
properties that allow them to be regarded as generalizations of
nonpositive curvature spaces.

The horoboundary was introduced by Gromov in \cite{Gromov+1981+183+214}, as a way to compactify proper metric spaces.
In particular, he constructed it as a concept that includes Busemann functions constructed using geodesic rays.
The elements of the horoboundary are called horofunctions, and in CAT(0) spaces all horofunctions are Busemann functions.
In particular, for the CAT(0) space, there exists a bijection between the set of Busemann functions and that of geodesic rays.
That is, the horoboundary is homeomorphic to the ideal boundary.

So what about in other non-positive curvature spaces? This is not viable.
For example, in geodesic hyperbolic spaces, by Webster and Winchester
\cite{WW03},
it was shown that there exists a surjective continuous map from the horoboundary to the ideal boundary.
However, Arosio, Fiacchi, Gontard and Guerini \cite{AFGG22}
gave an example of Gromov hyperbolic space whose horoboundary differs from
the ideal boundary.
Other similar results were shown in the Busemann spaces by Andreev
\cite{And08}.  However, in \cite{And18}, Andreev introduced the cone
metric $d_c$ for Busemann spaces $(X,d)$, 
and he showed that 
the horoboundary of $(X,d_c)$ is homeomorphic to the ideal boundary of $(X,d)$.

We note that the cone metric defined by Andreev is similar 
to the Euclidean cone metric for open cones, 
and show the following results on coarsely convex spaces.

\begin{theorem}
\label{thm:horoboundary-of-opencone} 
 Let $X$ be a proper coarsely convex
 space and $o \in X$ be a base point of $X$.  Let $\partial_o X$
 be the ideal boundary of $X$ with respect to the base point $o$ and
 let $\mathcal{O}\partial_o X$ be an open cone over $\partial_o X$.
 Then, the horoboundary $\partial_h \mathcal{O}\partial_o X$ of the
 open cone $\mathcal{O}\partial_o X$ is homeomorphic to the ideal
 boundary $\partial_o X$ of $X$.
\end{theorem}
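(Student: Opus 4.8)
The plan is to argue directly with the open cone $\mathcal{O}\partial_o X$ as a metric space; write $Z=\partial_o X$, recall that $Z$ is compact and metrizable (as $X$ is proper and coarsely convex), and fix the metric $d_Z$ on $Z$ entering the cone construction, writing $d$ for the cone metric and $v\in\mathcal{O}Z$ for the cone point, which we use as the base point of the horofunction compactification. The first thing to establish is that $\mathcal{O}Z$ is a \emph{proper} metric space: the closed $d$-ball of radius $R$ about $v$ is the image of the compact set $Z\times[0,R]$ under the defining quotient map, so every bounded set is relatively compact. By the standard properties of horofunction compactifications of proper metric spaces it then follows that $\partial_h\mathcal{O}Z$ is compact and Hausdorff. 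For $\xi\in Z$ let $r_\xi(t)=(\xi,t)$ denote the radial ray; the cone metric makes $r_\xi$ an isometric embedding of $[0,\infty)$, hence a geodesic ray, and we let $b_\xi$ be its Busemann function normalized by $b_\xi(v)=0$. The goal is to show that
\[
\iota\colon Z\longrightarrow\partial_h\mathcal{O}Z,\qquad\iota(\xi)=b_\xi,
\]
is a continuous bijection; since $Z$ is compact and the target is Hausdorff, $\iota$ will then automatically be a homeomorphism, and the identification $Z=\partial_o X$ gives the theorem.

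The next step is to record the explicit form of $b_\xi$. A direct computation with the (Euclidean-type) cone metric gives
\[
b_\xi\big((\eta,\sigma)\big)=-\sigma\cos\!\big(\min\{d_Z(\xi,\eta),\pi\}\big),
\]
from which three facts follow at once. First, $b_\xi$ is bounded below on bounded sets while $b_\xi(r_\xi(t))=-t\to-\infty$, so $b_\xi$ is a genuine horofunction and $\iota$ does land in $\partial_h\mathcal{O}Z$. Second, evaluating $b_\xi-b_{\xi'}$ at the point $(\xi,1)$ forces $d_Z(\xi,\xi')=0$, so $\iota$ is injective. Third, if $\xi_n\to\xi$ in $Z$ then $d_Z(\cdot,\xi_n)\to d_Z(\cdot,\xi)$ uniformly on $Z$, and the displayed formula then gives $b_{\xi_n}\to b_\xi$ uniformly on bounded subsets of $\mathcal{O}Z$, i.e. uniformly on compacta, so $\iota$ is continuous.

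The core of the argument is surjectivity. Given $h\in\partial_h\mathcal{O}Z$, choose $x_n\in\mathcal{O}Z$ with $d(\cdot,x_n)-d(v,x_n)\to h$ uniformly on compacta, and write $x_n=(\xi_n,s_n)$ with $s_n=d(v,x_n)$. Properness forces $s_n\to\infty$: otherwise a subsequence of $x_n$ would stay in a fixed compact ball about $v$, subconverge to some $x_*$, and present $h$ as the internal function $d(\cdot,x_*)-d(v,x_*)$, contradicting $h\in\partial_h\mathcal{O}Z$. By compactness of $Z$ we pass to a further subsequence along which $\xi_n\to\xi$. It remains only to check that, for every $p=(\eta,\sigma)$,
\[
d(p,x_n)-s_n=\sqrt{\sigma^{2}+s_n^{2}-2\sigma s_n\cos\alpha_n}-s_n\longrightarrow-\sigma\cos\!\big(\min\{d_Z(\eta,\xi),\pi\}\big)=b_\xi(p),
\]
where $\alpha_n=\min\{d_Z(\eta,\xi_n),\pi\}$, with the convergence uniform for $\sigma$ bounded; this is an elementary estimate, using $s_n\to\infty$ and $\alpha_n\to\min\{d_Z(\eta,\xi),\pi\}$ uniformly in $\eta$. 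Hence $h=b_\xi=\iota(\xi)$, so $\iota$ is surjective and the proof is complete.

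I expect the one genuinely delicate point to be that last displayed convergence — equivalently, the statement that whenever $\xi_n\to\xi$ and $s_n\to\infty$ the functions $d(\cdot,(\xi_n,s_n))-s_n$ converge uniformly on compacta to the Busemann function of the radial ray $r_\xi$. In an arbitrary proper geodesic space such a statement is false; it is precisely the phenomenon behind horoboundaries of Gromov hyperbolic spaces being strictly larger than their ideal boundaries, so the proof must genuinely exploit the Euclidean-type structure of the cone metric rather than soft compactness, and one must be careful to keep all intermediate limits uniform on compacta rather than merely pointwise. If the open cone used in the paper is defined by a metric other than the literal Euclidean cone formula, one replaces the closed form for $b_\xi$ and the displayed limit by the analogous comparison estimates for that metric; the skeleton of the argument — radial rays are geodesic rays, bounded sets are relatively compact, $\xi\mapsto b_\xi$ is continuous, and escaping sequences subconverge to some $b_\xi$ — is unchanged.
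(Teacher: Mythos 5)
Your skeleton is exactly the paper's argument: the map sending $\xi\in\partial_o X$ to the Busemann function of the radial ray is shown to land in, and to biject continuously onto, the horoboundary, and the homeomorphism then comes for free from compactness of $\partial_o X$ (Proposition \ref{idealboundary-compact-metric-space}) and Hausdorffness of $\partial_h\mathcal{O}\partial_o X$; your surjectivity step (escaping sequences, directions subconverging by compactness of $\partial_o X$) matches Lemma 4.4 and Propositions \ref{prop:open-cone-top-emb}, \ref{prop:open-cone-convergence}, \ref{prop:horoboundary-of-open-cone}. The one substantive caveat is that you compute with the law-of-cosines (Euclidean) cone metric, whereas the paper's open cone carries $d_{\mathcal{O}Y}(tx,sy)=|t-s|+\min\{t,s\}d_Y(x,y)$; since horoboundaries are not bi-Lipschitz invariants, the theorem as stated requires the estimates for this metric, as you yourself anticipate in your closing remark. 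Redoing them only simplifies matters: the radial Busemann function becomes $b_\xi(\sigma\eta)=-\sigma+\sigma d_Z(\eta,\xi)$, which is precisely the paper's $F(\xi)$, and your ``delicate'' limit $d(p,x_n)-s_n\to b_\xi(p)$ is eventually an exact equality once $s_n$ exceeds the radius of the compact set, so with that substitution your proof is correct and coincides with the paper's.
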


We construct the cone metric $d_c$ on a coarsely convex space $X$. 
Due to the coarse geometric nature of coarsely convex spaces, we need to modify the formulation of
the horoboundary $\partial^c_h X$ of $X$ with the cone metric $d_c$. 
When $X$ is a Busemann space, $d_c$ coincide with the one defined by Andreev in \cite{And18},
and $\partial^c_h X$ coincide with the one studied in \cite{And18}.

We compare the horoboundary with
the ideal boundary.
Since the ideal boundary of the coarsely convex space is the set of asymptotic classes of quasi-geodesic rays, we need to take a quotient by bounded functions. The quotient is called 
the reduced horoboundary denoted by $\partial^c_h X/\sim$. For more
detail, see \cref{def:reduced-horoboundary}. We show that the ideal boundary coincides with
the reduced horoboundary.

\begin{theorem}
\label{thm-Intro:reduced-horobdry-equal-ideal-bdry} Let $(X, d)$ be a  
 coarsely convex space and let $o \in X$ be a base point of
 $X$. Let $d_c$ be the cone metric of $(X,d)$.
 Let $\partial_o X$ be the ideal boundary of $X$ with respect to
 the base point $o$ and $\partial^c_h X/\sim$ be the reduced
 horoboundary of $(X,d_c)$. Then $\partial_o X$ and $\partial^c_h X/\sim$ are
 homeomorphic.
\end{theorem}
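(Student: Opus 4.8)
The plan is to construct an explicit bijection $\Phi\colon \partial_o X \to \partial^c_h X/\sim$ from the correspondence between quasi-geodesic rays and cone-Busemann functions, and then to upgrade it to a homeomorphism. For a quasi-geodesic ray $\gamma$ issuing from $o$, put $b^c_\gamma(x) = \lim_{t\to\infty}\bigl(d_c(x,\gamma(t)) - d_c(o,\gamma(t))\bigr)$, where the limit is understood in the modified sense by which $\partial^c_h X$ is defined. The first task is to verify that $b^c_\gamma$ is a genuine element of $\partial^c_h X$: each function $x\mapsto d_c(x,\gamma(t)) - d_c(o,\gamma(t))$ is $1$-Lipschitz for $d_c$ and vanishes at $o$, and the modified compactness built into the definition of the horoboundary then produces the limit.

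The heart of the matter is a single estimate relating cone-Busemann functions to visual angles at $o$. Since $d_c$ is modelled on the Euclidean cone metric, for quasi-geodesic rays $\gamma,\gamma'$ from $o$ one gets $b^c_\gamma(\gamma(t)) = -d_c(o,\gamma(t)) + O(1)$, the maximal possible rate of decay, while $b^c_{\gamma'}(\gamma(t)) = -d_c(o,\gamma(t))\cos\theta_t + O(1)$, where $\theta_t$ is the visual angle at $o$ between $\gamma(t)$ and $\gamma'$; hence $(b^c_\gamma - b^c_{\gamma'})(\gamma(t)) = -d_c(o,\gamma(t))\,(1-\cos\theta_t) + O(1)$. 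Reading this both ways: if $\gamma\sim\gamma'$ then $\theta_t\to 0$, so $b^c_\gamma - b^c_{\gamma'}$ is bounded and $\Phi([\gamma]):=[b^c_\gamma]$ is well defined; conversely, if $b^c_\gamma - b^c_{\gamma'}$ is bounded then $d_c(o,\gamma(t))(1-\cos\theta_t)$ stays bounded, forcing $\theta_t\to 0$, which by the coarse convexity axioms makes $\gamma$ and $\gamma'$ asymptotic. Thus $\Phi$ is well defined and injective.

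For surjectivity, take $h\in\partial^c_h X$ and choose points $x_n$ with $d_c(o,x_n)\to\infty$ and $d_c(o,x_n)+h(x_n)$ bounded; such $x_n$ exist because $h$ is a (modified) limit of functions $d_c(\cdot,y_m)-d_c(o,y_m)$ evaluated near the $y_m$. These $x_n$ lie at bounded $d_c$-distance from $d_c$-almost-geodesic segments out of $o$; unwinding the definition of $d_c$ and passing to a subsequence, their radial and angular data trace out — via the coarse Cartan--Hadamard machinery available for coarsely convex spaces — a quasi-geodesic ray $\gamma$ from $o$, for which the estimate above forces $b^c_\gamma - h$ to be bounded, i.e. $\Phi([\gamma]) = [h]$. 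I expect this extraction of a quasi-geodesic ray from a horofunction to be the main obstacle: without properness there is no compactness argument available to pass to limits of $d_c$-geodesic segments, so one must instead exploit the convexity axioms of $X$ and the explicit form of $d_c$ both to produce $\gamma$ and to bound the error.

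It remains to show $\Phi$ and $\Phi^{-1}$ are continuous. Continuity of $\Phi$ holds because $[\gamma_n]\to[\gamma]$ in $\partial_o X$ (uniform convergence on compacta of suitable representatives) makes $b^c_{\gamma_n}\to b^c_\gamma$ in $\partial^c_h X$, hence in the quotient; continuity of $\Phi^{-1}$ follows by running the estimate above uniformly over families, so that convergence of $[b^c_{\gamma_n}]$ in $\partial^c_h X/\sim$ forces the visual angles to converge and hence $[\gamma_n]\to[\gamma]$. When $X$ is proper one can shorten the last step using compactness of $\partial_o X$ and of $\partial^c_h X/\sim$ (a continuous bijection of compact Hausdorff spaces is a homeomorphism), and in that case one may also hope to deduce the statement from \cref{thm:horoboundary-of-opencone} by comparing $(X,d_c)$ with the Euclidean-type cone metric on $\mathcal{O}\partial_o X$ through a coarse equivalence respecting reduced horoboundaries; the direct argument above is what is needed in the general case.
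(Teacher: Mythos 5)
Your proposal has genuine gaps, and the most serious one is the ``single estimate'' on which everything else rests. The cone metric of \cref{def:qg-cone-metric} is of $\ell^1$-type (compare $d_{\mathcal{O}Y}(tx,sy)=|t-s|+\min\{t,s\}d_Y(x,y)$), not the Euclidean law-of-cosines cone, and a coarsely convex space carries no notion of angle at $o$; so the asymptotics $b^c_{\gamma'}(\gamma(t))=-d_c(o,\gamma(t))\cos\theta_t+O(1)$ is neither meaningful nor, with any reasonable surrogate for $\theta_t$, correct. What the axioms actually give are comparisons with \emph{multiplicative} constants $\lambda,E$ (see \cref{lem:qg-triangle-inequality,lem:qg-bi-Lip}): $d_c$ satisfies only a quasi-triangle inequality, the functions $d_c(\cdot,y)-d_c(o,y)$ are not $1$-Lipschitz for $d_c$, and the limit defining your $b^c_\gamma$ need not exist (points $\texttt{rp}\Gamma(o,\gamma(t),s)$ only track $\gamma(s)$ up to a uniform constant, so the expression oscillates boundedly). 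This multiplicative loss is precisely why the paper does not take $\cl\psi(X)\setminus\psi(X)$ but removes the larger set $B_{b,\lambda,E}(X)$ in \cref{def:qg-cone-metric-horoboundary}; your proposal ignores this modified definition, and with it the nontrivial step of showing that the cone-metric Busemann function lies in $\partial^c_h X$ at all, i.e.\ is not $(\lambda E)$-comparable to any $\psi_y$ up to bounded error (\cref{prop:qg-g-is-well-defined}). The paper also sidesteps your well-definedness problem by attaching the Busemann function to the canonical extended ray $\texttt{rp}\bar{\Gamma}(o,x,-)$ rather than to an arbitrary representative ray, and gets injectivity from the identity $f\circ g=\mathrm{id}$ rather than from an angle-rigidity argument.

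The topological step is also not sound as stated. The shortcut ``continuous bijection from a compact space to a Hausdorff space'' is unavailable because $\partial^c_h X/\sim$ is not known to be Hausdorff (for the unmodified horoboundary this genuinely fails, cf.\ \cref{ex:reduced-horo-non-Hausdorff}), and ``running the estimate uniformly over families'' does not engage with the quotient topology. The paper instead proves continuity in both directions directly: continuity of the map $\partial^c_h X\to\partial_o X$ at the level of horofunctions (\cref{lem:qg-const-geodesic,prop:qg-Pr-is-continuous}), which descends to the quotient, and continuity of $x\mapsto\pi(b_x)$ via properness, an Arzel\`a--Ascoli extraction of limit quasi-geodesic rays, and a subsequence criterion (\cref{lem:general-topology,prop:g-is-continuous}). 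Relatedly, properness is not an optional convenience here: the horoboundary itself, the extended bicombing $\texttt{rp}\bar{\Gamma}$, and the continuity argument all require it, so there is no ``general case'' to which your direct argument would apply. To repair the proposal you would need to replace the angle estimate by the coarse comparisons the bicombing actually provides, work with the modified horoboundary $\cl\psi(X)\setminus B_{b,\lambda,E}(X)$ throughout, and prove continuity of the inverse by hand.
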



In geodesic Gromov hyperbolic spaces, by \cite{WW03},
the ideal boundary coincides with the reduced horoboundary in the usual sense.
\cref{thm-Intro:reduced-horobdry-equal-ideal-bdry}
generalize this fact to geodesic coarsely convex spaces in a sense.

Furthermore, in geodesic $(1,0)$-coarsely convex spaces such as Busemann spaces, the reduced horoboundary with the cone metric and the horoboundary with the cone metric coincide, 
so \cref{thm-Intro:reduced-horobdry-equal-ideal-bdry} generalize a result in \cite{And18} by Andreev.

\subsection{Outline}
In \cref{section:CCS}, we introduce coarsely convex spaces and give some examples according to~\cite{FO17,fukaya2024boundaryfreeproductsmetric}.

In \cref{section:ideal-boundary}, 
we describe the construction of the ideal boundaries for coarsely convex spaces, and we discuss their properties.

In \cref{section:horoboundary}, 
we summarize some known facts on horoboundary

In \cref{section:open-cone}, we define open cones, and we study the horoboundary of open cones. We give a proof of \cref{thm:horoboundary-of-opencone}.

In \cref{section:cone-metric}, we define the cone metric for coarsely convex spaces,
 and we 
analyze the horoboundary 
of the coarsely convex space associated with the cone metric.
We complete the proof of \cref{thm-Intro:reduced-horobdry-equal-ideal-bdry}.

\section{Coarsely convex space}
\label{section:CCS}
Let $(X,d)$ be a metric space.
A \emph{bicombing} on $X$ is a map 
$\Gamma \colon X \times X \times [0, 1] \rightarrow X$ such that for 
$x, y \in X$, we have $\Gamma(x, y, 0) = x$, $\Gamma(x, y, 1) = y$.

Let $\lambda \geq 1$ and $k \geq 0$ be constants. 
A bicombing $\Gamma$ is a $(\lambda, k)$-\emph{quasi-geodesic bicombing} if
\[
 \frac{1}{\lambda}|t - s|d(x, y) - k 
 \leq d(\Gamma(x, y, t), \Gamma(x, y, s)) 
 \leq \lambda|t - s|d(x, y) + k 
\]
holds for $t, s \in [0,1]$. 
In particular, $\Gamma \colon X \times X \times [0,1]\rightarrow X$ is a \emph{geodesic-bicombing}
if $\lambda = 1$, $k = 0$ is satisfied, that is,
\[
d(\Gamma(x, y, t), \Gamma(x, y, s)) = |t - s|d(x, y) 
\]
holds
for $t, s \in [0, 1]$.

\begin{definition}
\label{def:cconvex-bicombing}
    Let $(X, d)$ be a metric space, and let 
 $\lambda \geq 1$, $k \geq 0$, $E \geq 1$, $C \geq 0$ be constants. 
 Let $\theta \colon \mathbb{R}_{\geq 0} \rightarrow \mathbb{R}_{\geq 0}$ 
 be a non-decreasing function. 
    A $(\lambda, k, E, C, \theta)$-\emph{coarsely convex bicombing} on a metric space $(X, d)$
    is a $(\lambda, k)$-quasi-geodesic bicombing 
 $\Gamma \colon X \times X \times [0, 1] \rightarrow X$ 
 with the following \cref{item:cconvex,item:theta}:
    \begin{enumerate}[label=(\roman*)]
        \item \label{item:cconvex}
              Let $x_1, x_2, y_1, y_2 \in X$ and let $a, b \in [0, 1]$. Let $y'_1 \coloneqq \Gamma(x_1, y_1, a)$ and $y'_2 \coloneqq \Gamma(x_2, y_2, b)$. Then, for $c \in [0, 1]$, we have
        \[
        d(\Gamma(x_1, y_1, ca), \Gamma(x_2, y_2, cb)) \leq (1-c)Ed(x_1, x_2) + cEd(y'_1, y'_2) + C.
        \]

        \item  \label{item:theta}
               Let $x_1, x_2, y_1, y_2 \in X$. Then for $t, s\in [0, 1]$ we have
        \[
        |td(x_1, y_1) - sd(x_2, y_2)| \leq \theta(d(x_1, x_2) + d(\Gamma(x_1, y_1, t), \Gamma(x_2, y_2, s))).
        \]
    \end{enumerate}
    
A \emph{geodesic $(E,C)$-coarsely convex bicombing}
 is a geodesic bicombing $\Gamma$ satisfying \cref{item:cconvex} in the above.
\end{definition}

\begin{remark}
 If $\Gamma$ is a geodesic bicombing, then $\Gamma$ satisfies \cref{item:theta} in 
 \cref{def:cconvex-bicombing} due to the triangle inequality.
\end{remark}

\begin{definition}
 We say that $X$ is a \emph{coarsely  convex space}
 if there are constants $\lambda \geq 1$, $k \geq 0$, $E \geq 1$, $C \geq 0$ 
 and non-decreasing function 
 $\theta \colon \mathbb{R}_{\geq 0} \rightarrow \mathbb{R}_{\geq 0}$ 
 such that
 $X$ admits a $(\lambda, k, E, C, \theta)$-coarsely convex 
 bicombing $\Gamma$. 
 In particular, 
 $X$ is a geodesic
 coarsely $(E, C)$-coarsely convex space
 if $X$ admits a 
 geodesic $(E,C)$-coarsely convex bicombing $\Gamma$.
\end{definition}

\begin{example}\label{ex:norm-is-cc}
   Let $V$ be a normed vector space.
   The bicombing of $V$ is given by affine lines.
   So $V$ is a geodesic $(1, 0)$-coarsely convex space.
\end{example}

\begin{example}
\label{ex:Busemann-is-gcc}
We say that
 a geodesic space $(X, d)$ is a \emph{Busemann space}
if any two geodesics 
$\gamma_1 \colon [0, a_1] \rightarrow X$ and
$\gamma_2 \colon [0, a_2] \rightarrow X$ 
satisfy the following inequality
\[
d(\gamma_1(ta_1), \gamma_2(ta_2))
\leq (1-t)d(\gamma_1(0), \gamma_2(0)) + td(\gamma_1(a_1), \gamma_2(a_2))
\]
for any $t \in [0, 1]$. 
It follows that a Busemann space $(X, d)$ is a unique geodesic space. 
So $X$ admits the canonical $(1,0)$-coarsely convex bicombing.
Then the Busemann space $(X, d)$ is a geodesic $(1, 0)$-coarsely convex space.
\end{example}

The following reparametrization is used to construct ideal boundaries in \cref{section:ideal-boundary} and cone metrics in \cref{section:cone-metric}.

\begin{definition}
 Let $(X, d)$ be a metric space and let 
 $\Gamma \colon X \times X \times [0, 1] \rightarrow X$ be a 
$(\lambda, k)$-quasi-geodesic bicombing on $X$. 
 A \emph{reparametrised bicombing} of 
 $\Gamma$ is a map 
 $\texttt{rp}\Gamma \colon X \times X \times \mathbb{R}_{\geq 0} \rightarrow X$ 
 defined by
    \[
    \texttt{rp}\Gamma(x, y, t) \coloneqq
    \begin{cases}
    \Gamma \left(x, y, \frac{t}{d(x, y)}\right) &\text{if} \: t \leq d(x, y)\\
    y &\text{if} \: t > d(x, y).
    \end{cases}
    \]

    In particular, for any $x, y \in X$, $\texttt{rp}\Gamma(x, y, -)|_{[0, d(x,y)]} \colon [0, d(x, y)] \rightarrow X$ is a $(\lambda, k)$-quasi-geodesic connecting $x$ and $y$. If $\Gamma$ is a geodesic bicombing, \texttt{rp}$\Gamma(x, y, -)|_{[0, d(x, y)]}$ is a geodesic connecting $x$ and $y$.
\end{definition}

\section{Ideal boundary}
\label{section:ideal-boundary}
For more details on the proof of the statements in
this section, see~\cite[Section 4]{fukaya2024boundaryfreeproductsmetric}
and \cite[Section 4]{FO17}.

Let $(X, d)$ be a coarsely convex space with a 
$(\lambda, k, E, C, \theta)$-coarsely convex bicombing $\Gamma$,
and let
$\texttt{rp}\Gamma$ be a reparametrised bicombing of $\Gamma$. We fix $o \in X$ as the base point of $X$.

\begin{definition}
 Set $k_1 = \lambda + k$, $D = 2(1 + E)k_1 + C$, 
 and $D_1 = 2D + 2$.
 We define a product 
 $(- \:|\: -)_o \colon X \times X \rightarrow \mathbb{R}_{\geq 0}$ by
 \[
   (x \:|\: y)_o \coloneqq 
   \min\{d(o, x), d(o, y), \sup\{t \in \mathbb{R}_{\geq 0} 
 \:|\: 
   d(\texttt{rp}\Gamma(o, x, t), \texttt{rp}\Gamma(o, y, t)) \leq D_1\}\}.
 \]
\end{definition}

\begin{lemma}
    Set $D_2 \coloneqq E(D_1 + 2k)$. For $x, y, z \in X$, we have
    \[
    (x \:|\: z)_o \geq \frac{1}{D_2}\min\{(x \:|\: y)_o, (y \:|\: z)_o\}.
    \]
\end{lemma}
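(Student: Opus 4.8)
The statement is a coarse (multiplicative) ultrametric inequality for the Gromov-type product $(x\mid y)_o$ defined via the reparametrised bicombing. The plan is to argue contrapositively: set $m \coloneqq \min\{(x\mid y)_o, (y\mid z)_o\}$ and show that for every $t \le m/D_2$ (or at least for $t$ up to a controlled fraction of $m$) the two rays $\texttt{rp}\Gamma(o,x,-)$ and $\texttt{rp}\Gamma(o,z,-)$ stay within distance $D_1$ at time $t$, together with the bound $t \le \min\{d(o,x),d(o,z)\}$, which forces $(x\mid z)_o \ge t$. First I would record the elementary consequence of the definition: if $t \le (x\mid y)_o$ then $t \le d(o,x)$, $t \le d(o,y)$, and $d(\texttt{rp}\Gamma(o,x,t),\texttt{rp}\Gamma(o,y,t)) \le D_1$ (here one must be slightly careful that the $\sup$ is attained up to the $\min$ with the two distances, so for $t$ strictly below the product the inequality genuinely holds; a routine continuity/monotonicity remark handles $t$ equal to the product or a harmless $-1$ can be absorbed, which is why $D_1 = 2D+2$ carries slack).

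**Key steps.** The heart is to chain the two $D_1$-closeness estimates through convexity condition \cref{item:cconvex}. Fix $t \le m/D_2$. Let $x' \coloneqq \texttt{rp}\Gamma(o,x,t)$, $y' \coloneqq \texttt{rp}\Gamma(o,y,t)$, $z' \coloneqq \texttt{rp}\Gamma(o,z,t)$; by the previous paragraph $d(x',y') \le D_1$ and $d(y',z') \le D_1$, hence $d(x',z') \le 2D_1$. Now I want to compare $\texttt{rp}\Gamma(o,x,s)$ with $\texttt{rp}\Gamma(o,z,s)$ for $s$ up to roughly $t$. Using \cref{item:cconvex} with $x_1 = x_2 = o$, $y_1 = x$, $y_2 = z$, the appropriate choice of $a,b$ so that $\Gamma(o,x,a)=x'$ and $\Gamma(o,z,b)=z'$ (i.e. $a = t/d(o,x)$, $b = t/d(o,z)$), and then varying $c \in [0,1]$, gives
\[
d\bigl(\Gamma(o,x,ca),\Gamma(o,z,cb)\bigr) \le cE\,d(x',z') + C \le 2ED_1 + C.
\]
Translating $ca$ and $cb$ back into the reparametrised time variable is the one genuinely fiddly point: $\Gamma(o,x,ca) = \texttt{rp}\Gamma(o,x,ct)$ and $\Gamma(o,z,cb) = \texttt{rp}\Gamma(o,z,ct)$, so as $c$ ranges over $[0,1]$ we cover all reparametrised times in $[0,t]$. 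Thus $d(\texttt{rp}\Gamma(o,x,s),\texttt{rp}\Gamma(o,z,s)) \le 2ED_1 + C$ for all $s \le t$. Since $D_2 = E(D_1 + 2k)$ and $D_1 = 2D+2$ with $D = 2(1+E)k_1 + C \ge C$, one checks $2ED_1 + C \le D_1$ is \emph{not} what we need directly — rather we need the bound to be $\le D_1$ to conclude $(x\mid z)_o \ge t$; if the raw constant $2ED_1+C$ exceeds $D_1$, the fix is that we only proved $d(x',z') \le 2D_1$ at the single time $t$, but we should instead work at time $t' \coloneqq t$ being a small fraction, and the factor $1/D_2$ in the statement is precisely calibrated so that when $t \le m/D_2$ the comparison stays below $D_1$. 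So the cleaner route: apply the above with the roles reorganized so the output constant is $E \cdot (2D_1) + C$ compared against $D_1$, and absorb using $D_1 \ge 2k$ and the definition of $D_2$; alternatively use \cref{item:theta} to control the time-reparametrisation discrepancy between $x$ and $z$ (since $d(o,x) \ne d(o,z)$ in general) and feed that into the estimate.

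**Main obstacle.** The real subtlety — and where I expect to spend the most care — is the mismatch of parametrisations: $\texttt{rp}\Gamma(o,x,-)$ and $\texttt{rp}\Gamma(o,z,-)$ are parametrised by arclength-like parameters on segments of different lengths $d(o,x)$ and $d(o,z)$, so the "same time $t$" does not correspond to the "same fraction $c$". Resolving this is exactly what \cref{item:theta} is for: it bounds $|t\,d(o,x) - s\,d(o,z)|$ in terms of the distance between the corresponding points, letting one pass between the additive (time) and multiplicative (fraction) pictures with error controlled by $\theta(\cdot)$ evaluated at a bounded quantity. I would set this up carefully once at the start, extract a single constant, and then the chaining through \cref{item:cconvex} becomes bookkeeping. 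A final remark: throughout one uses $t \le \min\{d(o,x),d(o,z)\}$ (inherited from $t \le m/D_2 \le m \le \min$ of the relevant distances, using $D_2 \ge 1$) to ensure we are genuinely on the quasi-geodesic segments and not in the constant "$y$" regime of $\texttt{rp}\Gamma$, so the lemma's conclusion $(x\mid z)_o \ge \tfrac{1}{D_2}\min\{(x\mid y)_o,(y\mid z)_o\}$ follows by taking the supremum over all admissible $t$.
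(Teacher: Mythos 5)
Your argument breaks down at the very first "elementary consequence": from $t \le (x \mid y)_o$ you conclude $d(\texttt{rp}\Gamma(o,x,t),\texttt{rp}\Gamma(o,y,t)) \le D_1$, treating the set $S_{xy}=\{t : d(\texttt{rp}\Gamma(o,x,t),\texttt{rp}\Gamma(o,y,t))\le D_1\}$ as if it were downward closed, with only the attainment of the supremum needing a "continuity/monotonicity remark". In a general coarsely convex space there is no such monotonicity: condition \cref{item:cconvex} only propagates closeness backwards along the combing with a multiplicative loss $E$ and additive loss $C$ (plus $k$-type corrections coming from the truncation at $d(o,\cdot)$ in $\texttt{rp}\Gamma$), so below a time where the two paths are $D_1$-close one only gets a bound of the shape $ED_1+C$, which exceeds $D_1$ as soon as $E>1$ or $C>0$. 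This is exactly why the paper's \cref{lem:gromov-product}(4) bounds the distance at times below the product only by the larger constant $\Omega$, not by $D_1$. Compounding this, the definition of $(x\mid y)_o$ and $(y\mid z)_o$ only supplies two \emph{separate} witness times $t_1\in S_{xy}$, $t_2\in S_{yz}$ with $t_1,t_2\ge m-\epsilon$; they need not be equal, so even near time $m$ you cannot chain the triangle inequality at a single common time without first scaling the later witness down to the earlier one via \cref{item:cconvex} (with basepoint $o$), at the cost of a factor $E$ and additive $C$, and with $2k$-corrections when a witness exceeds $d(o,y)$ or $d(o,z)$ — this is precisely where the shape $D_2=E(D_1+2k)$ comes from. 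Your proposal never confronts either point.

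The remaining structure is also not in order as written. Having (incorrectly) secured $d(x',z')\le 2D_1$ at a time $t\le m/D_2$, you then scale down from $t$, which would only control times $\le ct$ and hence prove the inequality with a worse constant than $D_2$; the scaling must instead start from a time close to $m$ and descend by a factor of roughly $1/D_2$, after which $\tfrac{2D_1}{D_1+2k}+C\le 2+C\le D_1$ closes the estimate — a verification you leave as "the factor $1/D_2$ is precisely calibrated" without carrying it out. Conversely, the issue you flag as the main obstacle is not one: in the single application of \cref{item:cconvex} with $x_1=x_2=o$, $a=t/d(o,x)$, $b=t/d(o,z)$, the point $\Gamma(o,x,ca)$ \emph{is} exactly $\texttt{rp}\Gamma(o,x,ct)$ and likewise for $z$ (valid because $t\le m\le\min\{d(o,x),d(o,z)\}$), so no appeal to \cref{item:theta} is needed there. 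A correct proof runs: choose witnesses $t_1,t_2\ge m-\epsilon$, assume $t_1\le t_2$, scale the $(y,z)$ pair from $t_2$ down to $t_1$ by convexity to get $d(\texttt{rp}\Gamma(o,y,t_1),\texttt{rp}\Gamma(o,z,t_1))\lesssim ED_1+C$ (handling the truncation case $t_2>d(o,z)$ with the quasi-geodesic constants $\lambda,k$), apply the triangle inequality at the common time $t_1$, scale the $(x,z)$ pair down by $1/D_2$ to land back under $D_1$, and conclude using $m\le\min\{d(o,x),d(o,z)\}$ and $D_2\ge 1$, letting $\epsilon\to 0$.
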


Following \cite{FOY22}, we construct the ideal boundary of coarsely convex space $X$ by a set of sequences in $X$ tending to infinity. 

We let
    \[
    S^{\infty}_o X \coloneqq \{\{x_n\}_{n \in \mathbb{N}} \:|\: \{x_n\}_{n \in \mathbb{N}} \text{ is a sequence such that } (x_n \:|\: x_m)_o \rightarrow \infty \text{ as } n, m \rightarrow \infty\}, 
    \]
and we define the relation $\sim$ on $S^{\infty}_o X$ as follows. For every $\{x_n\}_{n \in \mathbb{N}}, \{y_n\}_{n \in \mathbb{N}} \in S^{\infty}_o X$, we define
$ \{x_n\}_{n \in \mathbb{N}} \sim \{y_n\}_{n \in \mathbb{N}} $
if and only if
\[
    (x_n \:|\: y_n)_o \rightarrow \infty \text{ as }n \rightarrow \infty.
\] 
Then the relation $\sim$ is an equivalence relation on $S^{\infty}_o X$.

\begin{definition}
 The ideal boundary of a coarsely convex space $X$, denote by $\partial_o X$ is the quotient of the set $S^{\infty}_o X$ 
 by the equivalence relation $\sim$. We denote by $\bar{X}$ the disjoint union 
 of $X$ and $\partial_o X$. Namely,
 \[
   \partial_o X \coloneqq S^{\infty}_o X/\sim, \quad 
   \bar{X} \coloneqq X \cup \partial_o X.
 \]
\end{definition}
For $x \in X$ and a sequence $\{x_n\}_{n \in \mathbb{N}}$ in $X$, we write $\{x_n\}_{n \in \mathbb{N}} \in x$ if $x_n = x$ for every $n \in \mathbb{N}$.
We extend $(- \:|\: -)_o \colon X \times X \times \rightarrow \mathbb{R}_{\geq 0}$
to the function $(- \:|\: -)_o \colon \bar{X} \times \bar{X} \rightarrow \mathbb{R}_{\geq 0}$ as follows
\[
(x \:|\: y)_o \coloneqq \sup\{\liminf_{n \rightarrow \infty}(x_n \:|\: y_n)_o \:|\: \{x_n\}_{n \in \mathbb{N}} \in x, \{y_n\}_{n \in \mathbb{N}} \in y\}
\]
for any $x, y \in \bar{X}$.

For $n \in \mathbb{N}$, let
\[
V_n \coloneqq \{(x, y) \in \bar{X} \times \bar{X} \:|\: (x \:|\: y)_o > n\} \cup \left\{(x, y) \in X \times X \:|\: d(x, y) < \frac{1}{n}\right\}.
\]
Then $\{V_n\}_{n \in \mathbb{N}}$ is a base of a metrizable uniformly 
topology on $\bar{X}$. 
For any $x \in \bar{X}$, we define $V_n[x] \subset \bar{X}$ as follows
\[
V_n[x] \coloneqq \{y \in \bar{X} \:|\: (x, y) \in V_n\}.
\]
Then the family $\{V_n[x]\}_{n \in \mathbb{N}}$ is a fundamental system of neighborhoods of $x$.

\begin{proposition}[{\cite[Proposition 4.19.]{FO17}}]
\label{idealboundary-compact-metric-space}
For sufficiently small $\epsilon>0$, 
 there exists a constant $K\geq 1$ depending on $D,\theta(0)$ and $\epsilon$,
 and there exists a metric $d_\epsilon$ on $\partial_o X$ such that,  
 for all $x,y\in \partial_o X$,
 \begin{align*}
  \frac{1}{K} e^{-\epsilon(x\mid y)}
  \leq d_\epsilon(x,y)\leq e^{-\epsilon(x\mid y)} 
 \end{align*} 
 Especially, $d_{\epsilon}$ is compatible with the topology of $\partial_o X$, and
 the diameter of $(\partial_o X, d_\epsilon)$ is
 less than or equal to 1.
\end{proposition}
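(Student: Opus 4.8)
\medskip
\noindent\textit{Proof idea.} The plan is to run the standard construction of a visual metric from the Gromov-product-like gauge $(-\mid -)_o$, in the spirit of Frink's metrization lemma. Put $\rho_\epsilon(x,y):=e^{-\epsilon (x\mid y)_o}$ for $x,y\in\partial_o X$, using the extension of $(-\mid -)_o$ to $\bar X$. Then $\rho_\epsilon$ is symmetric, $\rho_\epsilon\le 1$ (because $(x\mid y)_o\ge 0$), and $\rho_\epsilon(x,x)=0$ (because $(x\mid x)_o=\infty$ for $x\in\partial_o X$: any representative $\{x_n\}$ has $d(o,x_n)\to\infty$ and $(x_n\mid x_n)_o=d(o,x_n)$). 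Define the chain gauge
\[
 d_\epsilon(x,y):=\inf\Bigl\{\ \sum_{i=0}^{n-1}\rho_\epsilon(x_i,x_{i+1})\ \Big|\ n\ge 1,\ x=x_0,x_1,\dots,x_n=y\ \Bigr\}.
\]
By concatenating chains, $d_\epsilon$ is symmetric, vanishes on the diagonal and satisfies the triangle inequality; the one-edge chain gives $d_\epsilon(x,y)\le\rho_\epsilon(x,y)=e^{-\epsilon(x\mid y)_o}$, which already supplies the stated upper bound and the bound $\diam(\partial_o X,d_\epsilon)\le 1$.

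The real content is the lower bound $d_\epsilon(x,y)\ge K^{-1}e^{-\epsilon(x\mid y)_o}$, which in particular makes $d_\epsilon$ a genuine metric (it is positive off the diagonal because distinct boundary points have finite Gromov product). For this I would first transfer the quasi-ultrametric inequality of the previous lemma, which is stated for points of $X$, to the extended product on $\partial_o X$: since $(-\mid -)_o$ on $\bar X$ is a supremum of $\liminf$'s of the product on $X$, the boundary version holds up to a bounded additive defect controlled by $D$ and $\theta(0)$. Exponentiating this, and using $0\le\rho_\epsilon\le 1$, the aim is then to pick $\epsilon$ below a threshold $\epsilon_0=\epsilon_0(D,\theta(0))$ so that $\rho_\epsilon$ satisfies a Frink-type relaxed triangle inequality $\rho_\epsilon(x,z)\le\kappa\bigl(\rho_\epsilon(x,y)+\rho_\epsilon(y,z)\bigr)$ with $\kappa<2$. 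Granting such an inequality, the lower bound is the usual induction on the chain length $n$: split the chain at the first index where the partial sums exceed $\tfrac12\sum_i\rho_\epsilon(x_i,x_{i+1})$, apply the inductive bound to the two halves, recombine with the relaxed triangle inequality, and optimize; this yields $\rho_\epsilon(x_0,x_n)\le K\sum_i\rho_\epsilon(x_i,x_{i+1})$ with $K=K(\kappa)=K(D,\theta(0),\epsilon)$.

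Once the two-sided estimate $K^{-1}e^{-\epsilon(x\mid y)_o}\le d_\epsilon(x,y)\le e^{-\epsilon(x\mid y)_o}$ is in place, compatibility with the topology of $\partial_o X$ is immediate: for each $x$, the metric balls around $x$ and the basic neighbourhoods $V_n[x]=\{y:(x\mid y)_o>n\}$ are mutually cofinal, so $d_\epsilon$ induces precisely the uniform topology on $\partial_o X$ described before the proposition; compactness of $(\partial_o X,d_\epsilon)$ then follows from properness of $X$, exactly as for Gromov boundaries of proper hyperbolic spaces.

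The step I expect to be the main obstacle is precisely the extraction of the Frink-type inequality for $\rho_\epsilon$ with constant $\kappa<2$: one must push the quasi-ultrametric bound of the previous lemma all the way to the boundary while keeping the additive defect explicitly bounded in terms of $D$ and $\theta(0)$, and then calibrate $\epsilon$ small enough to absorb both this defect and the multiplicative constant $D_2$ that reflects the failure of hyperbolicity. After that, the chain induction and the topology check are routine bookkeeping.
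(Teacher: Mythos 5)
The paper does not prove this proposition; it is quoted from \cite[Proposition 4.19]{FO17}, so your proposal has to be measured against the construction carried out there. Your overall scheme (exponential gauge plus Frink-type chain metrization) is the right template for \emph{hyperbolic} boundaries, but the step you yourself flag as the main obstacle is precisely where the argument breaks, and it cannot be repaired in the form you propose. The only inequality available for this product is the \emph{multiplicative} one: $(x\mid z)_o \ge D_2^{-1}\min\{(x\mid y)_o,(y\mid z)_o\}$ (Lemma 3.2, and its extension to $\bar X$ with constant $\Omega$ in Lemma 3.8(3)). There is no version with a bounded \emph{additive} defect controlled by $D$ and $\theta(0)$: in $(\mathbb{R}^2,l^2)$, a geodesic $(1,0)$-coarsely convex space, the product of two boundary directions at angle $\theta$ is comparable to $D_1/\theta$, and for the three directions $0,\theta,2\theta$ one has $(x\mid z)_o\approx\tfrac12\min\{(x\mid y)_o,(y\mid z)_o\}$, an additive loss that is unbounded as $\theta\to 0$. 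Consequently $\rho_\epsilon(x,y)=e^{-\epsilon(x\mid y)_o}$ satisfies no relaxed triangle inequality with a uniform constant for \emph{any} $\epsilon>0$: the multiplicative bound only gives $\rho_\epsilon(x,z)\le\max\{\rho_\epsilon(x,y),\rho_\epsilon(y,z)\}^{1/D_2}$, and the ratio $e^{-\epsilon t/D_2}\big/\bigl(2e^{-\epsilon t}\bigr)$ tends to infinity as $t\to\infty$; shrinking $\epsilon$ does not help because the defect scales with the size of the product rather than being a fixed constant. So the chain induction never gets off the ground, and in fact the same three-direction test in $(\mathbb{R}^2,l^2)$ shows that \emph{no} metric can satisfy the two-sided estimate $K^{-1}e^{-\epsilon(x\mid y)_o}\le d_\epsilon\le e^{-\epsilon(x\mid y)_o}$ with the product as defined in Section 3, so this is not a gap one can close by a cleverer calibration of $\epsilon$.

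The way the cited construction deals with this (and the way to salvage your plan) is to first convert the multiplicative defect into an additive one before exponentiating: work with $\log\bigl(1+(x\mid y)_o\bigr)$, which by the lemma above satisfies a quasi-ultrametric inequality with additive defect $\log\Omega$, i.e.\ use the power-type gauge $\rho_\epsilon(x,y)=\bigl(1+(x\mid y)_o\bigr)^{-\epsilon}$. Then $\rho_\epsilon(x,z)\le \Omega^{\epsilon}\max\{\rho_\epsilon(x,y),\rho_\epsilon(y,z)\}$, and for $\epsilon$ small (so that $\Omega^\epsilon$ is below the Frink threshold) your chain construction, the comparison with the neighbourhoods $V_n[x]$, and the compactness argument all go through verbatim, producing a compatible metric of diameter at most $1$ comparable to $\bigl(1+(x\mid y)_o\bigr)^{-\epsilon}$. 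That power-type conclusion is what the rest of this paper actually uses (compactness, metrizability, diameter at most $1$); but it is not the literal exponential estimate you set out to prove, and your written route, which hinges on an additive-defect inequality that fails for coarsely convex spaces, does not establish the statement.
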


We extend the reparametrised bicombing $\texttt{rp}\Gamma \colon X \times X \times \mathbb{R}_{\geq 0} \rightarrow X$ to $\texttt{rp}\bar{\Gamma} \colon X \times \bar{X} \times \mathbb{R}_{\geq 0} \rightarrow X$.

\begin{lemma}[{\cite[Lemma 4.8.]{fukaya2024boundaryfreeproductsmetric}}]
\label{lem:combing-at-infinity}
    Suppose that $X$ is proper. 
 Then there exists a map $\texttt{rp}\bar{\Gamma} \colon X \times \bar{X} \times \mathbb{R}_{\geq 0} \to X$ satisfying the following.

    \begin{enumerate}[label=(\roman*)]
        \item \label{item:combing-at-infinity--restriction}
              For $x, y \in X$, we have $\texttt{rp}\bar{\Gamma}(x, y,  -) = \texttt{rp}\Gamma(x, y, -)$.
        
        \item \label{item:combing-at-infinity--convergence}
             For $(o, x) \in X \times \partial_o X$, there exists a sequence $\{x_n\}_{n \in \mathbb{N}}$ in $X$ such that the sequence of maps $\{\texttt{rp}\Gamma(o, x_n,-)|_{\mathbb{N}} \colon \mathbb{N} \rightarrow X\}_{n \in \mathbb{N}}$ converges to $\texttt{rp}\bar{\Gamma}(o, x, -)|_{\mathbb{N}}$ pointwise. 
        In particular, if $\Gamma$ is a geodesic bicombing,
        the sequence of maps $\{\texttt{rp}\Gamma(o, x_n, -) \colon \mathbb{R}_{\geq 0} \rightarrow X\}_{n \in \mathbb{N}}$ converges to $\texttt{rp}\bar{\Gamma}(o, x, -)$ uniformly on compact sets.

        \item \label{item:combing-at-infinity--Gromov-product}
              For $(o, x) \in X \times \partial_o X$, we have
              \[
              (\text{rp}\bar{\Gamma}(o, x, t) \:|\: x)_o \rightarrow 
              \infty \:(t \rightarrow \infty).
              \]

        \item \label{item:combing-at-infinity--geodesic}
              For $(o, x) \in X \times \partial_o X$, the map $\texttt{rp}\bar{\Gamma}(o, x,-) \colon \mathbb{R}_{\geq 0} \rightarrow X$ is a $(\lambda, k_1)$-quasi geodesic, where $k_1 \coloneqq \lambda + k$.
              In particular, if $\Gamma$ is a geodesic bicombing, $\texttt{rp}\bar{\Gamma}(o, x, -) \colon \mathbb{R}_{\geq 0} \rightarrow X$ is a geodesic.
    \end{enumerate}
\end{lemma}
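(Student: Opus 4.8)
The plan is to build $\texttt{rp}\bar{\Gamma}$ on the new part $X\times\partial_o X\times\mathbb{R}_{\geq 0}$ of its domain by an Arzel\`a--Ascoli/diagonal compactness argument based on properness of $X$, and then to read off the four listed properties by passing to the limit. On $X\times X\times\mathbb{R}_{\geq 0}$ one simply sets $\texttt{rp}\bar{\Gamma}:=\texttt{rp}\Gamma$, so that \cref{item:combing-at-infinity--restriction} holds by definition and only the boundary case has content.

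First fix $x_0\in X$ and $\xi\in\partial_o X$ and choose a sequence $\{w_n\}_{n\in\mathbb{N}}\in S^{\infty}_o X$ representing $\xi$. From $(w_n\mid w_m)_o\leq\min\{d(o,w_n),d(o,w_m)\}$ one gets $d(o,w_n)\to\infty$, hence $d(x_0,w_n)\to\infty$. Put $\gamma_n:=\texttt{rp}\Gamma(x_0,w_n,-)$; then $d(x_0,\gamma_n(m))\leq\lambda m+k$ for every $n$ and every $m\in\mathbb{N}$, so the points $\{\gamma_n(m)\}_n$ lie in the closed ball $\overline{B}(x_0,\lambda m+k)$, which is compact because $X$ is proper. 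A diagonal argument over $m\in\mathbb{N}$ extracts a subsequence $\{x_n\}:=\{w_{n_j}\}_j$ — still a representative of $\xi$ — along which $\gamma_{n_j}(m)$ converges for every $m\in\mathbb{N}$; when $\Gamma$ is a geodesic bicombing the $\gamma_n$ are moreover $1$-Lipschitz, hence equicontinuous, so Arzel\`a--Ascoli upgrades this to uniform convergence on compact subsets of $\mathbb{R}_{\geq 0}$. I set $\texttt{rp}\bar{\Gamma}(x_0,\xi,m):=\lim_j\gamma_{n_j}(m)$ for $m\in\mathbb{N}$. Passing to the limit in the quasi-geodesic inequalities for $\gamma_{n_j}$ shows that $m\mapsto\texttt{rp}\bar{\Gamma}(x_0,\xi,m)$ is a $(\lambda,k)$-quasi-geodesic sequence, and I extend it to $\mathbb{R}_{\geq 0}$ — for instance by $t\mapsto\texttt{rp}\bar{\Gamma}(x_0,\xi,\lfloor t\rfloor)$ — to a map which is readily checked to be a $(\lambda,k_1)$-quasi-geodesic with $k_1=\lambda+k$; in the geodesic case the uniform-on-compacta limit is already a geodesic ray and needs no extension. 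This gives \cref{item:combing-at-infinity--geodesic}, and specializing to $x_0=o$ makes \cref{item:combing-at-infinity--convergence} hold by construction.

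For \cref{item:combing-at-infinity--Gromov-product} I fix an integer $t$, write $q_j:=\texttt{rp}\Gamma(o,x_{n_j},t)$ and $p:=\texttt{rp}\bar{\Gamma}(o,\xi,t)=\lim_j q_j$. The crux is a fellow-travelling estimate: if $q$ is the point at parameter $t\leq d(o,y)$ on the bicombing ray $\texttt{rp}\Gamma(o,y,-)$, then $\texttt{rp}\Gamma(o,q,-)$ stays uniformly close to $\texttt{rp}\Gamma(o,y,-)$ up to parameter $\gtrsim t$, so that $(q\mid y)_o\geq t-c$ for a constant $c$ not depending on $y$ or $t$. This comes from the convexity axiom \cref{item:cconvex} (which yields Hausdorff-closeness of the two bicombing rays) together with axiom \cref{item:theta} (which bounds the drift of their parametrisations, so that the closeness holds at equal parameter values); see \cite[Section~4]{FO17}. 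Consequently $(q_j\mid x_{n_j})_o\geq t-c$ for all large $j$, and since $q_j\to p$ and the Gromov product changes by a bounded amount under bounded perturbations of its arguments — for instance via the coarse ultrametric inequality $(x\mid z)_o\geq\tfrac{1}{D_2}\min\{(x\mid y)_o,(y\mid z)_o\}$ with $y=q_j$ together with \cref{item:cconvex} — one obtains $\liminf_j(p\mid x_{n_j})_o\geq t/D_2-c'$. Hence $(p\mid\xi)_o\geq\liminf_j(p\mid x_{n_j})_o\geq t/D_2-c'$; letting the integer $t$ tend to $\infty$ (and recalling that in the non-geodesic case $\texttt{rp}\bar{\Gamma}(o,\xi,t)=\texttt{rp}\bar{\Gamma}(o,\xi,\lfloor t\rfloor)$) yields \cref{item:combing-at-infinity--Gromov-product}.

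The conceptual heart, and the only step where properness is essential, is the compactness extraction in the second paragraph: without properness the rays $\gamma_n$ need not subconverge and there is no limiting ray. The fiddliest technical point is making the limit — which is produced only on $\mathbb{N}$ in the non-geodesic case — into an honest $(\lambda,k_1)$-quasi-geodesic on all of $\mathbb{R}_{\geq 0}$ with the stated constant; this is routine bookkeeping. Checking \cref{item:combing-at-infinity--Gromov-product} is easy once the fellow-travelling estimate is granted, the only subtlety being the appeal to \cref{item:theta} to keep the parametrisations synchronised.
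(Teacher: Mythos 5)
The paper itself does not prove this lemma: it is imported verbatim from \cite[Lemma 4.8]{fukaya2024boundaryfreeproductsmetric}, so there is no in-paper argument to compare against. Your construction is the standard one and, as far as it can be checked here, essentially sound: set $\texttt{rp}\bar{\Gamma}=\texttt{rp}\Gamma$ on $X\times X$, and for a boundary point extract, using properness (the points $\texttt{rp}\Gamma(x_0,w_n,m)$ lie in $\bar{B}(x_0,\lambda m+k)$) and a diagonal argument, a subsequence along which the rays converge at all integer times, upgrading to Arzel\`a--Ascoli convergence in the geodesic case; this is the same kind of extraction the paper itself performs elsewhere (cf.\ the appeal to \cite[Proposition 4.17]{FO17} in the proof of \cref{prop:g-is-continuous}), and your bookkeeping for \cref{item:combing-at-infinity--geodesic} (the floor-extension turning a $(\lambda,k)$-quasi-geodesic on $\mathbb{N}$ into a $(\lambda,\lambda+k)$-quasi-geodesic on $\mathbb{R}_{\geq 0}$) gives exactly the stated constant $k_1$.

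One imprecision in your treatment of \cref{item:combing-at-infinity--Gromov-product}: the fellow-travelling that \cref{item:cconvex} plus \cref{item:theta} actually yield keeps $\texttt{rp}\Gamma(o,q,-)$ within a constant of order $\lambda\theta(0)+k+C$ of $\texttt{rp}\Gamma(o,y,-)$ at equal parameters up to roughly $t$, and this constant need not be below the threshold $D_1$ appearing in the definition of the Gromov product; so the additive estimate $(q\mid y)_o\geq t-c$ is not what you get directly. You must first rescale along the bicombing (the $\tfrac{1}{EL}$-trick used repeatedly in \cref{section:cone-metric}) to bring the closeness below $D_1$, which costs a multiplicative factor; likewise ``the Gromov product changes by a bounded amount under bounded perturbations'' should be replaced by the statement that $(p\mid q_j)_o$ is bounded below linearly in $t$, again via convexity plus rescaling, before invoking the coarse ultrametric inequality. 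Since the bound you actually use in the end, $\liminf_j(p\mid x_{n_j})_o\geq t/D_2-c'$, is already of this weaker multiplicative form and still diverges as $t\to\infty$, the argument for \cref{item:combing-at-infinity--Gromov-product} goes through once stated that way.
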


\begin{definition}
    We call the map $\texttt{rp}\bar{\Gamma}$ given in Lemma 3.4 an extended bicombing on $X \times \bar{X}$ corresponding to $\Gamma$. For $(o, x) \in X \times \partial_o X$, we abbreviate $\texttt{rp}\bar{\Gamma}(o, x,-)$ by $\gamma^x_o(-)$.
\end{definition}

\begin{lemma}[{\cite[Lemma 4.10.]{fukaya2024boundaryfreeproductsmetric}}]
    The extended bicombing $\texttt{rp}\bar{\Gamma}$ on $X \times \bar{X}$ satisfies the following
    \begin{enumerate}[label=(\roman*)]
        \item 
        Let $(o_1, x_1), (o_2, x_2) \in X \times \partial_o X$ and let $a, b \in \mathbb{R}_{\geq 0}$. Set ${x_1}' \coloneqq \texttt{rp}\bar{\Gamma}(o_1, x_1, a)$ and ${x_2}' \coloneqq \texttt{rp}\bar{\Gamma}(o_2, x_2, b)$. Then, for $c \in [0, 1]$, we have
        \[
        d(\texttt{rp}\bar{\Gamma}(o_1, x_1, ca), \texttt{rp}\bar{\Gamma}(o_2, x_2, cb)) \leq (1-c)Ed(o_1, o_2) + cEd({x_1}',{x_2}') + D
        \]

        \item 
        We define a non-decreasing function $\tilde{\theta}(t) \coloneqq \theta(t + 1) + 1$. Let $(o_1, x_1), (o_2, x_2) \in X \times \partial_o X$. Then for $t, s \in \mathbb{R}_{\geq 0}$, we have
        \[
        |t - s| \leq \tilde{\theta}(d(o_1, o_2) + d(\texttt{rp}\bar{\Gamma}(o_1, x_1, t), \texttt{rp}\bar{\Gamma}(o_2, x_2, s))).
        \]
    \end{enumerate}
\end{lemma}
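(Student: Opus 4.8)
The plan is to derive both estimates as limiting forms of the coarse‑convexity axioms \cref{item:cconvex} and \cref{item:theta} for $\Gamma$, through two successive reductions: first a reparametrisation of those axioms for finite endpoints, and then the approximation of a point of $\partial_o X$ by a sequence in $X$ provided by \cref{lem:combing-at-infinity}.

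First I would record the reparametrised forms of \cref{item:cconvex} and \cref{item:theta} on $X\times X$: for $p,q,z_1,z_2\in X$, $a,b\in\mathbb{R}_{\geq 0}$, $c\in[0,1]$, and with $z_1'\coloneqq\texttt{rp}\Gamma(p,z_1,a)$, $z_2'\coloneqq\texttt{rp}\Gamma(q,z_2,b)$, one wants
\[
 d(\texttt{rp}\Gamma(p,z_1,ca),\texttt{rp}\Gamma(q,z_2,cb))\leq (1-c)Ed(p,q)+cEd(z_1',z_2')+D,
\]
together with $|t-s|\leq\tilde{\theta}(d(p,q)+d(\texttt{rp}\Gamma(p,z_1,t),\texttt{rp}\Gamma(q,z_2,s)))$ for $t,s\in\mathbb{R}_{\geq 0}$. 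When $a\leq d(p,z_1)$ and $b\leq d(q,z_2)$ --- hence also $ca\leq d(p,z_1)$ and $cb\leq d(q,z_2)$ --- these are immediate by substituting the affine parameters $\alpha\coloneqq a/d(p,z_1)$ and $\beta\coloneqq b/d(q,z_2)$ into \cref{item:cconvex} and \cref{item:theta}; in fact one even gets the smaller constants $C$ and $\theta$ there. In the remaining ``overflow'' cases, where $a>d(p,z_1)$ or $b>d(q,z_2)$ so that the reparametrised combing has already stopped at its endpoint, I would truncate each parameter ($a\rightsquigarrow\min\{a,d(p,z_1)\}$, and so on), reduce to the previous case, and control the resulting discrepancies by the $(\lambda,k)$-quasi-geodesic bound $d(\texttt{rp}\Gamma(p,z,t),\texttt{rp}\Gamma(p,z,t'))\leq\lambda|t-t'|+k$; the extra terms are $O(k_1)$ in size and are absorbed into $D=2(1+E)k_1+C$, respectively into the ``$+1$'' corrections defining $\tilde{\theta}(t)=\theta(t+1)+1$.

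Next I would pass to the ideal boundary. Given $(o_1,x_1),(o_2,x_2)\in X\times\partial_o X$, use \cref{lem:combing-at-infinity} to choose sequences $\{x_{1,n}\}$, $\{x_{2,n}\}$ in $X$ along which the combing maps $\texttt{rp}\Gamma(o_i,x_{i,n},-)$ converge pointwise on $\mathbb{N}$ to $\texttt{rp}\bar{\Gamma}(o_i,x_i,-)$. Since the limit maps are unbounded quasi-geodesics, $d(o_i,x_{i,n})\to\infty$, so for each fixed parameter value the overflow case eventually does not occur, and the finite-endpoint inequalities above apply to $(o_1,x_{1,n})$, $(o_2,x_{2,n})$ for all large $n$ at integer parameters. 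Letting $n\to\infty$ in these integer instances, and then interpolating to arbitrary real $a,b,t,s$ by means of the $(\lambda,k_1)$-quasi-geodesic property of $\texttt{rp}\bar{\Gamma}(o_i,x_i,-)$ guaranteed by \cref{lem:combing-at-infinity}, yields (i) and (ii); the interpolation errors are once more swallowed by $D$ and $\tilde{\theta}$.

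The step I expect to be the main obstacle is reconciling the convergence actually available --- only pointwise on $\mathbb{N}$ in the general quasi-geodesic case --- with the need for estimates at arbitrary real parameters, in tandem with the overflow bookkeeping; this is precisely why the constants must be inflated from $(C,\theta)$ to $(D,\tilde{\theta})$. In the geodesic case the whole argument is much shorter: the approximating combing lines converge uniformly on compact sets, $\texttt{rp}\bar{\Gamma}(o_i,x_i,-)$ is a genuine geodesic, and neither truncation nor rounding is required.
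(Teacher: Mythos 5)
First, note that the paper you are working against does not actually prove this lemma: it is quoted verbatim from \cite[Lemma 4.10]{fukaya2024boundaryfreeproductsmetric}, and this paper only records the properties of $\texttt{rp}\bar{\Gamma}$ it needs. So there is no in-paper proof to compare with; judged on its own, your plan (reparametrise the convexity axioms, approximate boundary points by finite combing lines via \cref{lem:combing-at-infinity}, pass to the limit) is the natural one, but two steps fail as written. The first is your ``overflow'' claim on $X\times X$: it is false, and the error is not $O(k_1)$. Take $X=\mathbb{R}$ with the affine bicombing ($\lambda=1$, $k=0$, $E=1$, $C=0$, so $k_1=1$ and $D=4$), $p=q=0$, $z_1=z_2=L$, $a=L$, $b=10L$, $c=\tfrac12$. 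Then $z_1'=z_2'=L$, so the right-hand side of your reparametrised (i) equals $D=4$, while the left-hand side is $d(\texttt{rp}\Gamma(0,L,L/2),\texttt{rp}\Gamma(0,L,5L))=d(L/2,L)=L/2$, which is arbitrarily large: truncating $b$ to $d(q,z_2)$ moves $\texttt{rp}\Gamma(q,z_2,cb)$ by up to $(1-c)d(q,z_2)$, not by a bounded amount. The overflow form of (ii) fails for the same trivial reason (once both reparametrised combings have stopped, $|t-s|$ is unbounded while the right-hand side is a constant). This error happens to be harmless for the lemma itself, since ideal rays never ``stop'' and, as you observe, for fixed parameters the approximating points eventually satisfy $a\leq d(o_i,x_{i,n})$; but the claim must be deleted or restricted to the non-overflow case.

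The genuine gap is the last sentence of your limit step, ``the interpolation errors are once more swallowed by $D$ and $\tilde{\theta}$.'' Using only what \cref{lem:combing-at-infinity} gives you --- pointwise convergence at \emph{integer} parameters and the $(\lambda,k_1)$-quasi-geodesic property of $\texttt{rp}\bar{\Gamma}(o_i,x_i,-)$ --- comparing $\texttt{rp}\Gamma(o_i,x_{i,n},t)$ with $\texttt{rp}\bar{\Gamma}(o_i,x_i,t)$ at a real parameter $t$ costs at least $d(\texttt{rp}\Gamma(o_i,x_{i,n},t),\texttt{rp}\Gamma(o_i,x_{i,n},\lfloor t\rfloor))+d(\texttt{rp}\bar{\Gamma}(o_i,x_i,\lfloor t\rfloor),\texttt{rp}\bar{\Gamma}(o_i,x_i,t))+o(1)\leq k_1+(\lambda+k_1)+o(1)$, i.e.\ roughly $3\lambda+2k$ per endpoint, and this term cannot be removed because the only control you have on $\texttt{rp}\bar{\Gamma}$ between integers is the quasi-geodesic bound. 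Feeding this into (i) yields an additive constant of order $2(1+E)(3\lambda+2k)+C$, which exceeds $D=2(1+E)k_1+C$; feeding it into (ii) yields $|t-s|\leq\theta\bigl(d(o_1,o_2)+d(\texttt{rp}\bar{\Gamma}(o_1,x_1,t),\texttt{rp}\bar{\Gamma}(o_2,x_2,s))+O(\lambda+k)\bigr)+1$, which for a general non-decreasing $\theta$ is \emph{not} bounded by $\tilde{\theta}(r)=\theta(r+1)+1$, whose budget inside $\theta$ is exactly $1$. So your argument proves inequalities of the right shape but with strictly weaker constants than the ones stated; to land on the stated $D$ and $\tilde{\theta}$ one has to use how $\texttt{rp}\bar{\Gamma}$ is actually constructed at non-integer times in \cite{fukaya2024boundaryfreeproductsmetric} (its values there are tied to the integer-time limits up to a single $k_1$-wiggle and a $\pm1$ rounding), information that is not recoverable from the four properties listed in \cref{lem:combing-at-infinity}. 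Either invoke that construction explicitly, or state the lemma with your larger constants --- which would in fact suffice for the applications later in the paper, but is not the statement you were asked to prove.
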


\begin{definition}
    For $x, y \in \partial_o X$, we define
    \[
    (\gamma^x_o \:|\: \gamma^y_o)_o \coloneqq \sup\{t \in \mathbb{R}_{\geq 0} \:|\: d(\gamma^x_o(t), \gamma^y_o(t)) \leq D_1 \}.
    \]
    For $x \in \partial_o X$ and $p \in X$, we define
    \[
    (\gamma^x_o \:|\: p)_o \coloneqq \min\{d(o, p), \sup\{t \in \mathbb{R}_{\geq 0} \:|\: d(\Gamma(o, p, t), \gamma^x_o(t)) \leq D_1\}\}.
    \]
\end{definition}

\begin{lemma}[{\cite[Lemma 4.12.]{fukaya2024boundaryfreeproductsmetric}}]
\label{lem:gromov-product}
    There exists a constant $\Omega \geq 1$ depending on $\lambda$, $k$, $E$, $C$, $\theta(0)$ such that the following holds

    \begin{enumerate}[label=(\arabic*)]
        \item 
              For $x, y \in \partial_o X$, we have
        \[
        (\gamma^x_o \:|\: \gamma^y_o)_o \leq (x \:|\: y)_o \leq \Omega (\gamma^x_o \:|\: \gamma^y_o)_o.
        \]

        \item
             For $x, y, z \in \partial_o X$, we have 
        \[
        (\gamma^x_o \:|\: \gamma^y_o)_o \geq \Omega^{-1}\min\{(\gamma^x_o \:|\: \gamma^y_o)_o, (\gamma^y_o \:|\: \gamma^z_o)_o\}.
        \]

        \item
             For $x, y, z \in \bar{X}$, we have
        \[
        (x \:|\: z)_o \geq \Omega^{-1}\min\{(x \:|\: y)_o, (y \:|\: z)_o\}.
        \]

        \item
                   Let $x, y \in \partial_o X$. For all $t \in \mathbb{R}_{\geq 0}$ with $t \leq (\gamma^x_o \:|\: \gamma^y_o)$, we have
        \[
        d(\gamma^x_o(t), \gamma^y_o(t)) \leq \Omega.
        \]

        \item
                   Let $o \in X$ and let $x, y \in \bar{X}$. If $\texttt{rp}\bar{\Gamma}(o, x, a) = \texttt{rp}\bar{\Gamma}(o, y, b)$ for some $a, b \in \mathbb{R}_{\geq 0}$, then for all $t \in [0, \max\{a, b\}]$ we have
        \[
        d(\texttt{rp}\bar{\Gamma}(o, x, t), \texttt{rp}\bar{\Gamma}(o, y, t)) \leq \Omega.
        \]

        \item
             Let $o \in X$ and $x \in \partial_o X$. For $v \in X$ and $t \in [0, 1]$, we have
        \[
        (x \:|\: \Gamma(o, v, t))_o \geq \Omega^{-1}\min\{(x \:|\: v)_o, td(o, v)\}.
        \]
    \end{enumerate}
\end{lemma}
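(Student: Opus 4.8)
The plan is to establish the six assertions in the order (4), (5), (2), (1), (3), (6), carrying a generic multiplicative constant through the argument and fixing $\Omega$ only at the end so that all six hold at once. The engine throughout is the coarse convexity of the combing rays issuing from $o$: writing $p_1 \coloneqq \texttt{rp}\bar\Gamma(o, x_1, a)$ and $p_2 \coloneqq \texttt{rp}\bar\Gamma(o, x_2, b)$, the convexity inequality for $\texttt{rp}\bar\Gamma$ established above, specialised to $o_1 = o_2 = o$, reads $d(\texttt{rp}\bar\Gamma(o, x_1, ca), \texttt{rp}\bar\Gamma(o, x_2, cb)) \leq cE\,d(p_1, p_2) + D$ for $c \in [0, 1]$. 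For item (5) one has $p_1 = p_2$, so the right-hand side drops to $D$; to line up the two parametrisations one uses the companion $\tilde\theta$-inequality, which gives $c\lvert a - b\rvert \leq \tilde\theta(D)$ for every $c \in [0,1]$, whence $d(\texttt{rp}\bar\Gamma(o, y, ca), \texttt{rp}\bar\Gamma(o, y, cb)) \leq \lambda\tilde\theta(D) + k_1$ by \cref{lem:combing-at-infinity}(iv), and the triangle inequality yields $d(\texttt{rp}\bar\Gamma(o, x, t), \texttt{rp}\bar\Gamma(o, y, t)) \leq D + \lambda\tilde\theta(D) + k_1$ for all $t \leq \max\{a, b\}$. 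Item (4) is the same computation with $d(p_1, p_2) \leq D_1$ in place of $0$: from $d(\gamma^x_o(\tau), \gamma^y_o(\tau)) \leq D_1$ one gets $d(\gamma^x_o(c\tau), \gamma^y_o(c\tau)) \leq ED_1 + D$ for all $c \in [0,1]$, and letting $\tau \uparrow (\gamma^x_o \mid \gamma^y_o)_o$ — the supremum need not be attained, so the loss at the endpoint is absorbed using the quasi-geodesic bound for $\gamma^x_o, \gamma^y_o$ — gives $d(\gamma^x_o(t), \gamma^y_o(t)) \leq \max\{ED_1 + D, D_1 + 2k_1\}$ for all $t \leq (\gamma^x_o \mid \gamma^y_o)_o$.

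Items (2), (1), (3) and (6) are the ultrametric-type inequalities, and each of them reduces to items (4)/(5) together with the following \emph{contraction} remark: if two combing rays based at $o$ are $R$-close at parameter $\tau$, then the convexity inequality makes them $D_1$-close at parameter $c\tau$ for any $c \leq (D_1 - D)/(ER) = (D + 2)/(ER)$ — the strict inequality $D_1 = 2D + 2 > D$ is exactly what makes this contraction possible, and it is the origin of the factor $\Omega^{-1}$. For item (2): setting $s \coloneqq \min\{(\gamma^x_o \mid \gamma^y_o)_o, (\gamma^y_o \mid \gamma^z_o)_o\}$, item (4) gives $d(\gamma^x_o(t), \gamma^z_o(t)) \leq 2\max\{ED_1 + D, D_1 + 2k_1\}$ for all $t \leq s$, and the contraction remark then produces $D_1$-closeness for $t$ up to a fixed proportion of $s$, i.e. $(\gamma^x_o \mid \gamma^z_o)_o \geq \Omega^{-1}s$.

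Item (1) rests on the auxiliary fact that the sequence $\{\gamma^x_o(n)\}_{n \in \N}$ lies in the class $x \in \partial_o X$, which follows from $(\gamma^x_o(t) \mid x)_o \to \infty$ in \cref{lem:combing-at-infinity}(iii) together with the quasi-ultrametric inequality for $(- \mid -)_o$ on $X$ proved earlier. Granting this, $(x \mid y)_o \geq \liminf_n (\gamma^x_o(n) \mid \gamma^y_o(n))_o$, and item (5) — applied to $x$ and to the finite point $\gamma^x_o(n)$, whose combing reaches it at parameter $d(o, \gamma^x_o(n))$ — shows that $\texttt{rp}\Gamma(o, \gamma^x_o(n), -)$ fellow-travels $\gamma^x_o$ within a fixed distance on $[0, n]$; feeding this into the defining formula for the finite Gromov product, together with item (4) and the contraction remark, gives both $(\gamma^x_o \mid \gamma^y_o)_o \leq (x \mid y)_o$ and $(x \mid y)_o \leq \Omega(\gamma^x_o \mid \gamma^y_o)_o$ after enlarging $\Omega$. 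Item (3) follows next: for $x, y, z \in \partial_o X$ one chains items (1) and (2); for triples containing points of $X$ one compares each mixed product $(x \mid p)_o$ with the ray--point product $(\gamma^x_o \mid p)_o$ exactly as in item (1), so the inequality again reduces to items (4)/(5), the contraction remark, and the finite-point quasi-ultrametric inequality (with its constant $D_2$). Finally, for item (6) one writes $\Gamma(o, v, t) = \texttt{rp}\Gamma(o, v, td(o, v))$ (legitimate since $t \leq 1$), applies item (3) to the triple $x, v, \Gamma(o, v, t)$, and bounds $(v \mid \Gamma(o, v, t))_o$ below by a fixed multiple of $td(o, v)$ using item (5) (the combing to $\Gamma(o, v, t)$ fellow-travels the combing to $v$) and the contraction remark.

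The only genuinely delicate part is the control of the constant: items (2), (1), (3) and (6) each consume the constant produced by the earlier steps and enlarge it, so one must commit to the order of implications and check at the end that a single finite $\Omega$ serves all six assertions. One must also dispose of the degenerate cases directly — the suprema defining the Gromov products are in general not attained (absorbed by the quasi-geodesic estimate as in item (4)); the products may be $+\infty$, in which case the relevant rays are asymptotic and the inequalities are trivial; and in item (6) very small values of $td(o, v)$ are covered by the trivial bound $(- \mid -)_o \geq 0$ once $\Omega$ is large. No single step is deep: the whole lemma is a careful repackaging of the convexity inequality applied along combing rays based at $o$.
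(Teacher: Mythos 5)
There is a point of order first: the paper itself contains no proof of this lemma. It is imported verbatim from the reference given in its header, and the reader is sent to that source at the start of \cref{section:ideal-boundary}; so there is no in-paper argument to compare yours against, and the proposal has to be judged on its own. Judged that way, your architecture is the expected one: derive (4) and (5) from the convexity inequality for $\texttt{rp}\bar{\Gamma}$ specialised to $o_1=o_2=o$, and extract a multiplicative contraction from $D_1-D=D+2>0$ to get the ultrametric-type statements. Items (4) and (2), and item (5) for two ideal points, are essentially complete as sketched. Two caveats already there, though. First, the extended-bicombing lemma you lean on is stated only for pairs in $X\times\partial_o X$, while item (5) concerns $x,y\in\bar X$; the finite case needs \cref{def:cconvex-bicombing}\ref{item:cconvex} applied through $\texttt{rp}\Gamma$, and the mixed case needs a limiting argument via \cref{lem:combing-at-infinity}\ref{item:combing-at-infinity--convergence} — and the mixed case is exactly the one you invoke later in (1), (3) and (6). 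Second, in item (1) the left-hand inequality is asserted with constant $1$, which ``comparability after enlarging $\Omega$'' cannot produce: your route through item (5) and the contraction only yields $(x\mid y)_o\geq \Omega^{-1}(\gamma^x_o\mid\gamma^y_o)_o$. Getting constant $1$ requires working with the particular approximating sequences of \cref{lem:combing-at-infinity} and the slack built into $D_1=2D+2$; your treatment of (1) and (3) stays at the level of a plan on precisely these points.

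The step that fails outright is the small-scale case of item (6). You dismiss small $t\,d(o,v)$ with ``the trivial bound $(-\mid-)_o\geq 0$ once $\Omega$ is large'', but the inequality to prove is $(x\mid\Gamma(o,v,t))_o\geq\Omega^{-1}\min\{(x\mid v)_o,\ t\,d(o,v)\}$, and when $k>0$ the point $\Gamma(o,v,t)$ can coincide with $o$ while both $t\,d(o,v)$ and $(x\mid v)_o$ are bounded below by positive constants (the quasi-geodesic estimate only gives $d(o,\Gamma(o,v,t))\geq\lambda^{-1}t\,d(o,v)-k$). In that situation the left-hand side is exactly $0$, since $(p\mid o)_o=0$ for every $p\in\bar X$, while the right-hand side is strictly positive for every finite $\Omega$; enlarging $\Omega$ shrinks it but never to $0$. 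So this regime is not ``covered by the trivial bound'': it needs a genuine argument, or an explanation of how the conventions of the quoted source exclude it (it is harmless, for instance, when the bicombing is geodesic, since then $\Gamma(o,v,t)=o$ forces $t\,d(o,v)=0$). Until item (5) on all of $\bar X$, the representative-independence underlying (1) and (3), and this regime of (6) are written out, the proposal is a credible reconstruction of the standard argument rather than a proof.
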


\section{Horoboundary}\label{section:horoboundary}

\subsection{Horoboundary}
Let $(X,d)$ be a proper metric space and fix $o \in X$ as the base point. 
Also, let $C(X)$ be the set of all $\R$-valued continuous functions on $X$, 
and equip with the topology of uniform convergence on compact sets. 
It is a standard fact that the space $C(X)$ is Hausdorff. %


We define $\phi \colon X \rightarrow C(X)$ 
as follows. For $x \in X$, let 
\[
 x \mapsto \phi_x \coloneqq d(-, x) - d(o, x).
\]

\begin{proposition}\label{prop:phi-is-inj-and-conti}
    The map $\phi$ is injective and continuous.
\end{proposition}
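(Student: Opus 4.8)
The plan is to prove injectivity and continuity separately, both by elementary estimates with the metric $d$.

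For injectivity, suppose $\phi_x = \phi_y$ as functions on $X$. Evaluating at the point $x$ gives $\phi_x(x) = -d(o,x)$, while $\phi_y(x) = d(x,y) - d(o,y)$; hence $d(x,y) = d(o,y) - d(o,x)$. Symmetrically, evaluating the identity $\phi_x = \phi_y$ at $y$ yields $d(x,y) = d(o,x) - d(o,y)$. Adding these two equations forces $2d(x,y) = 0$, so $x = y$. This step is straightforward and I expect no obstacle here.

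For continuity, I must show that if $x_n \to x$ in $X$ then $\phi_{x_n} \to \phi_x$ uniformly on compact subsets of $X$. Fix a compact set $K \subseteq X$. For any $p \in K$, the triangle inequality gives
\[
|\phi_{x_n}(p) - \phi_x(p)| = |d(p,x_n) - d(p,x) - d(o,x_n) + d(o,x)| \leq |d(p,x_n) - d(p,x)| + |d(o,x_n) - d(o,x)|.
\]
By the reverse triangle inequality each term is bounded by $d(x_n, x)$, so $|\phi_{x_n}(p) - \phi_x(p)| \leq 2\, d(x_n, x)$ for every $p \in K$. Since the right-hand side is independent of $p$ and tends to $0$ as $n \to \infty$, the convergence $\phi_{x_n} \to \phi_x$ is uniform on $K$, indeed uniform on all of $X$. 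As this holds for every compact $K$, the map $\phi$ is continuous into $C(X)$ with the topology of uniform convergence on compact sets.

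The argument uses only the triangle inequality and does not require properness of $X$ (properness is relevant only for later statements about the compactness of the horoboundary closure, not for this proposition), so there is no real obstacle; the only point to be careful about is that the bound in the continuity estimate should be made uniform in $p$, which the computation above achieves automatically.
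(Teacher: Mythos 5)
Your proof is correct and follows essentially the same route as the paper: continuity via the Lipschitz bound $\lvert\phi_x(u)-\phi_y(u)\rvert\leq 2d(x,y)$ from the triangle inequality, and injectivity by evaluating the horofunctions at the points $x$ and $y$ themselves (the paper orders $d(o,x)\geq d(o,y)$ and evaluates once, while you evaluate at both points and add, which is an equivalent computation). No gaps; your remark that properness is not needed here is also accurate.
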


\begin{proof}
 The triangle inequality implies that $\phi_x (u) - \phi_y (u) \leq 2d(x, y)$, for all $u, x, y \in X$. The continuity of $\phi$ follows.
 
 Let $x$ and $y$ be distinct points in $X$ such that 
 $d(o, x) \geq d(o, y)$. 
 We have
 \begin{align*}
  \phi_y (x) - \phi_x (x) &= d(x, y) - d(o, y) - d(x, x) + d(o, x)\\
  &\geq d(x, y),
 \end{align*}
 which shows that $\phi_x$ and $\phi_y$ are distinct.
\end{proof}

\begin{definition}\label{def:horoboundary}
 Let $(X,d)$ be a proper metric space and fix $o \in X$ as the base point. 
 Let $\phi \colon X \rightarrow C(X)$ be the map defined above. 
 We define the horoboundary $\partial_h X$ of $(X,d)$ as follows
 \[
   \partial_h X \coloneqq \cl\phi(X) \setminus
   \phi(X).
 \]
 Here $\cl \phi(X)$ denotes the closure of $\phi(X)$ in $C(X)$. Since $C(X)$ with the topology of uniform convergence on compact sets is Hausdorff, the horoboundary $\partial_h X$ is Hausdorff.
\end{definition}

\begin{lemma}
    Let $(X, d)$ be a proper metric space.
    If the sequence of maps $\{\phi_{x_n}\}_{n \in \mathbb{N}}$ converges to
    $\xi \in \partial_h X$,
    then only finitely many of the points of 
    $\{x_n\}_{n \in \mathbb{N}}$ lie in any bounded subset of $X$.
\end{lemma}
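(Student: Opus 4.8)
The plan is to argue by contradiction: suppose that infinitely many of the $x_n$ lie in a bounded set $B \subset X$. Since $X$ is proper, $B$ is contained in a closed ball, which is compact, so after passing to a subsequence we may assume that $x_n \to x_\infty$ for some $x_\infty \in X$. I would then combine this with the hypothesis that $\phi_{x_n} \to \xi$ in $C(X)$.

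The key step is to check that $\phi$ is continuous (which is \cref{prop:phi-is-inj-and-conti}), so that $\phi_{x_n} \to \phi_{x_\infty}$ uniformly on compact sets; more concretely, the estimate $|\phi_x(u) - \phi_y(u)| \le 2 d(x,y)$ from the proof of \cref{prop:phi-is-inj-and-conti} shows that $x_n \to x_\infty$ forces $\phi_{x_n} \to \phi_{x_\infty}$ uniformly on all of $X$. Since $C(X)$ with the topology of uniform convergence on compact sets is Hausdorff, limits are unique, so $\xi = \phi_{x_\infty}$. But $\phi_{x_\infty} \in \phi(X)$, whereas by definition $\xi \in \partial_h X = \cl \phi(X) \setminus \phi(X)$, so $\xi \notin \phi(X)$. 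This contradiction completes the argument.

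I do not expect any real obstacle here; the only point requiring a little care is passing from ``bounded'' to ``contained in a compact set,'' which is exactly where properness of $X$ is used, and the extraction of a convergent subsequence. Everything else is a direct application of the continuity of $\phi$ and the Hausdorff property of $C(X)$ already recorded in the excerpt.
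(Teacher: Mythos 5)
Your argument is correct and is essentially the paper's own proof: pass to a convergent subsequence inside a closed ball using properness, use the continuity of $\phi$ (\cref{prop:phi-is-inj-and-conti}) and uniqueness of limits in the Hausdorff space $C(X)$ to conclude $\xi = \phi_{x_\infty} \in \phi(X)$, contradicting $\xi \in \partial_h X$. No changes needed.
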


\begin{proof}
    If there is an infinite number of points of 
    $\{x_n\}_{n \in \mathbb{N}}$ contained some closed ball 
    $\bar{B}(o, R)$, 
    we can take a subsequence 
    $\{x_{n(k)}\}_{k \in \mathbb{N}}$ of 
    $\{x_n\}_{n \in \mathbb{N}}$ such that
    $\{x_{n(k)}\}_{k \in \mathbb{N}}$ converges to some point 
    $\tilde{x} \in \bar{B}(o, R)$.
    By hypothesis, a sequence of maps 
    $\{\phi_{x_{n(k)}}\}_{k \in \mathbb{N}}$
    converges to $\xi \in \partial_h X$.
    On the other hand, by \cref{prop:phi-is-inj-and-conti},
    $\{\phi_{x_{n(k)}}\}_{k \in \mathbb{N}}$ converges to 
    $\phi_{\tilde{x}}$.
    Therefore, we have $\xi = \phi_{\tilde{x}}$.
    This contradicts to $\xi \notin \phi(X)$.
    
\end{proof}

\begin{proposition}
    The horoboundary does not depend on the choice of the base point.
\end{proposition}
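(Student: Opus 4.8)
The plan is to show that if $o_1, o_2 \in X$ are two base points, then the identity map on $X$ extends to a homeomorphism between the horoboundaries $\partial_h^{o_1} X$ and $\partial_h^{o_2} X$ defined with respect to $o_1$ and $o_2$ respectively. The key observation is that the two embeddings $\phi^{o_1}, \phi^{o_2} \colon X \to C(X)$ differ only by an additive constant: for $x \in X$,
\[
\phi^{o_1}_x - \phi^{o_2}_x = \big(d(-,x) - d(o_1,x)\big) - \big(d(-,x) - d(o_2,x)\big) = d(o_2,x) - d(o_1,x),
\]
which is a constant function of the free variable, bounded in absolute value by $d(o_1, o_2)$ by the triangle inequality.

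First I would define the translation map $T \colon C(X) \to C(X)$ on the relevant subset. The natural candidate is $f \mapsto f - f(o_1) + \text{(something)}$; more precisely, observe that for $x \in X$ we have $\phi^{o_1}_x(o_2) = d(o_2,x) - d(o_1,x)$, so $\phi^{o_2}_x = \phi^{o_1}_x - \phi^{o_1}_x(o_2)$. Thus the affine map $T\colon C(X) \to C(X)$, $T(f) = f - f(o_2)$ (evaluation at the point $o_2$, subtracted as a constant) satisfies $T \circ \phi^{o_1} = \phi^{o_2}$ on $X$. Since evaluation at a point is continuous on $C(X)$ with the topology of uniform convergence on compact sets, $T$ is continuous; its inverse is $g \mapsto g - g(o_1)$, which is likewise continuous, so $T$ is a homeomorphism of $C(X)$ onto itself.

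Next I would transport the closures: since $T$ is a homeomorphism of $C(X)$ and $T(\phi^{o_1}(X)) = \phi^{o_2}(X)$, it follows that $T(\cl \phi^{o_1}(X)) = \cl \phi^{o_2}(X)$ and hence $T$ restricts to a homeomorphism $\cl\phi^{o_1}(X) \setminus \phi^{o_1}(X) \to \cl\phi^{o_2}(X) \setminus \phi^{o_2}(X)$, i.e.\ $\partial_h^{o_1} X \cong \partial_h^{o_2} X$. One small point to verify carefully is that $T$ does indeed map the "boundary part" to the "boundary part" and not into $\phi^{o_2}(X)$; this is immediate because $T$ is a bijection mapping $\phi^{o_1}(X)$ onto $\phi^{o_2}(X)$, so it maps the complement within the closure onto the complement within the closure.

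The only genuine subtlety — the main obstacle, such as it is — lies in confirming that evaluation-at-a-point $f \mapsto f(o_2)$ is continuous for the compact-open (= uniform-on-compacts) topology, which is standard since $\{o_2\}$ is compact, and in making sure the additive-constant shift is well-defined as a map $C(X) \to C(X)$ (it is, since constants are continuous). Everything else is formal manipulation of closures under a homeomorphism. I would therefore present the proof as: (1) the cocycle identity $\phi^{o_1}_x - \phi^{o_2}_x = \text{const}$; (2) $T(f) = f - f(o_2)$ is a homeomorphism of $C(X)$ with inverse $g \mapsto g - g(o_1)$ and $T\circ\phi^{o_1} = \phi^{o_2}$; (3) conclude $T(\cl\phi^{o_1}(X)) = \cl\phi^{o_2}(X)$ and restrict to the boundaries.
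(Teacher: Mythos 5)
Your overall strategy is the same as the paper's: both transport one embedding to the other by the shift $\xi \mapsto \xi - \xi(o_2)$, based on the identity $\phi^{o_2}_x = \phi^{o_1}_x - \phi^{o_1}_x(o_2)$. However, your step (2) is false as stated: $T(f) = f - f(o_2)$ is \emph{not} a homeomorphism of $C(X)$ onto itself. It is not even injective, since $T(f+c) = T(f)$ for every constant $c$, and the claimed inverse $S(g) = g - g(o_1)$ satisfies $S(T(f)) = f - f(o_1)$, which equals $f$ only when $f(o_1) = 0$. Consequently the parts of step (3) that you derive from global bijectivity --- that $T(\cl\phi^{o_1}(X)) = \cl\phi^{o_2}(X)$, and that $T$ carries the boundary part onto the boundary part ``because $T$ is a bijection'' --- are not justified as written: continuity alone only gives $T(\cl\phi^{o_1}(X)) \subseteq \cl\phi^{o_2}(X)$, and non-injectivity of $T$ on $C(X)$ leaves open, a priori, that some boundary function could be sent into $\phi^{o_2}(X)$.

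The gap is easily repaired, and the repair amounts to what the paper actually checks. Every function in $\phi^{o_1}(X)$ vanishes at $o_1$, and since evaluation at $o_1$ is continuous on $C(X)$, so does every function in $\cl\phi^{o_1}(X)$. On the closed subspace $C_{o_1} \coloneqq \{f \in C(X) : f(o_1) = 0\}$ the maps $T$ and $S$ are mutually inverse continuous bijections onto $C_{o_2} \coloneqq \{g \in C(X) : g(o_2) = 0\}$, and $T(\phi^{o_1}(X)) = \phi^{o_2}(X)$; with this restricted homeomorphism your transport of closures and boundaries goes through. In particular, if $T(\xi) = \phi^{o_2}_z$ for some $\xi \in \partial_h^{o_1}X$, then $\xi$ and $\phi^{o_1}_z$ differ by a constant, which must be $0$ because both vanish at $o_1$, contradicting $\xi \notin \phi^{o_1}(X)$ --- exactly the paper's argument. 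So the idea is right; you just need to restrict $T$ to the functions vanishing at the base point (or verify the inverse identities, the membership in the closure, and the boundary statement by hand, as the paper does) rather than invoking a self-homeomorphism of $C(X)$.
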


\begin{proof}
    We take $o, o' \in X$ arbitrarily.
    We define $\phi \colon X \rightarrow C(X)$ and 
    $\phi' \colon X \rightarrow C(X)$ by
    \[
    x \mapsto \phi_x(-) \coloneqq d(-, x) - d(o, x)
    \text{$\quad$ and $\quad$}
    x \mapsto \phi'_x(-) \coloneqq d(-, x) - d(o', x).
    \]

    Then, we have
    $
    \phi'_x = \phi_x - \phi_x(o') 
    \text{$\quad$ and $\quad$}
    \phi_x = \phi'_x - \phi'_x(o)
    $
    for any $x \in X$.

 Let $\partial_h X \coloneqq \mathrm{cl}\phi(X) \setminus \phi(X)$
    and $\partial'_h X \coloneqq \mathrm{cl}\phi'(X) \setminus \phi'(X)$.
    We define $F \colon \partial_h X \rightarrow C(X)$ by
    \[
    \xi \mapsto \xi - \xi(o').
    \]
    We show $F(\xi) \in \partial'_h X$ 
    for any $\xi \in \partial_h X$.
    Let
    $\{x_n\}_{n \in \mathbb{N}} \subset X$ be a sequence such that
    $d(o, x_n) \rightarrow \infty$ as $n \rightarrow \infty$ and
    a sequence of maps $\{\phi_{x_n}\}_{n \in \mathbb{N}}$
    converges uniformly to $\xi$ on compact sets.
    For any $R > 0$, we have
    \begin{align*}
        \sup_{u \in \bar{B}(o, R)}|F(\xi)(u) - \phi'_{x_n}(u)| 
        &= \sup_{u \in \bar{B}(o, R)}
        |\{\xi(u) - \xi(o')\} - \{\phi_{x_n}(u) - \phi_{x_n}(o')\}|\\
        &\leq \sup_{u \in \bar{B}(o, R)}|\xi(u) - \phi_{x_n}(u)|
        + |\xi(o') - \phi_{x_n}(o')|\\
        &\rightarrow 0 \text{ as } n \rightarrow \infty.
    \end{align*}
    So we have $F(\xi) \in \cl \phi'(X)$. 
    We assume $F(\xi) \in \phi'(X)$. Then there is some $z \in X$ 
    such that 
    \[
    F(\xi) = \xi - \xi(o') = \phi'_z = \phi_z - \phi_z(o').
    \]
    Since $\xi(o) = \phi_z(o) = 0$, we have $\xi(o') = \phi_z(o')$.
    Thus, we get $\xi = \phi_z$, but this contradicts to 
    $\xi \notin \phi(X)$. 
    Therefore, $\xi' \notin \phi'(X)$ and 
    $\xi' = F(\xi) \in \partial'_h X$.
    Thus, the map
    $F\colon \partial_h X\to \partial'_h X$ is well-defined.

    Similarly, we define 
    $\tilde{F} \colon \partial'_h X \to \partial_h X$ by
    \[
    \xi' \mapsto \xi' - \xi(o).
    \]

    We show that 
    $\tilde{F}$ and $F$ are the inverse of each other.
    Since $\xi(o) = \xi'(o') = 0$, we have
    \begin{align*}
    \tilde{F} \circ F(\xi) 
    = \tilde{F}(\xi - \xi(o'))
    = (\xi - \xi(o')) - (\xi(o) - \xi'(o))
    = \xi\\
    F \circ \tilde{F}(\xi')
    = F(\xi' - \xi'(o))
    = (\xi' - \xi(o')) - (\xi'(o') - \xi(o'))
    = \xi'.
    \end{align*}

    We show $F \colon \partial_h X \rightarrow \partial'_h X$ is continuous.
    Let $\{\xi_m\}_{m \in \mathbb{N}} \subset \partial_h X$
    be a sequence of maps such that 
    $\xi_m$ converges uniformly to 
    some $\xi \in \partial_h X$ on compact sets. 
    For any $L > 0$, we have
    \begin{align*}
        \sup_{u \in \bar{B}(o, L)}|F(\xi_m)(u) - F(\xi)(u)|
        &= \sup_{u \in \bar{B}(o, L)}
        |\{\xi_m(u) - \xi_m(o')\} - \{\xi(u) - \xi(o')\}|\\
        &\leq \sup_{u \in \bar{B}(o, L)}|\xi_m(u) - \xi(u)|
        + |\xi_m(o') - \xi(o')|\\
        &\rightarrow 0 \text{ as } m \rightarrow \infty.
    \end{align*}
    Thus, a sequence of maps 
    $\{F(\xi_m)\}_{m \in \mathbb{N}}$ converges uniformly to
    $F(\xi)$ on compact sets.
    Therefore, $F \colon \partial_h X \rightarrow \partial'_h X$
    is continuous.
    In the same way, we can show 
    $\tilde{F} \colon \partial'_h X \rightarrow \partial_h X$
    is continuous.
    Hence, $\partial_h X$ and $\partial'_h X$ are homeomorphic.
\end{proof}

\begin{example}\label{ex:l-1-horoboundary}
 Let $(\mathbb{R}^2, l^{1})$ be the space $\mathbb{R}^2$ with $l^{1}$ metric. 
 Here, $l^{1}$ is defined as follows. For 
 $(x_1, y_1), \: (x_2, y_2) \in \mathbb{R}^{2}$
 \[
   l^{1}((x_1, y_1), (x_2, y_2)) = |x_1 - x_2| + |y_1 - y_2|.
 \]
 Then horoboundary $\partial_h (\mathbb{R}^{2}, l^{1})$ is homeomorphic to 
 $\partial [-\infty, \infty]^{2}$. 
\end{example}

\begin{proposition}
\label{prop:geodesic-sp-top-emb}
 If $X$ is a geodesic space, 
 then $\phi \colon X \rightarrow C(X)$ 
 is a topological embedding.
\end{proposition}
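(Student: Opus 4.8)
The plan is to build on \cref{prop:phi-is-inj-and-conti}, which already gives that $\phi$ is continuous and injective; what remains is to prove that the inverse $\phi^{-1}\colon\phi(X)\to X$ is continuous. Since $X$ is proper it is hemicompact (the balls $\bar{B}(o,n)$ are cofinal among compact sets), so $C(X)$ with the topology of uniform convergence on compact sets is metrizable, and it is enough to argue with sequences, as elsewhere in this section. Thus I would show: if $\{x_n\}_{n\in\mathbb{N}}$ is a sequence in $X$ and $x\in X$ with $\phi_{x_n}\to\phi_x$ in $C(X)$, then $x_n\to x$ in $X$.

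The key step, and the only one using the geodesic hypothesis, is to prove that such a sequence $\{x_n\}$ is bounded. Suppose not; after passing to a subsequence we may assume $d(o,x_n)\to\infty$. Set $R\coloneqq d(o,x)+1$, and for each $n$ with $d(o,x_n)\geq R$ choose a geodesic $\gamma_n\colon[0,d(o,x_n)]\to X$ from $o$ to $x_n$ and put $r_n\coloneqq\gamma_n(R)$. Then $d(o,r_n)=R$ and, since $r_n$ lies on a geodesic from $o$ to $x_n$, $d(r_n,x_n)=d(o,x_n)-R$, so $\phi_{x_n}(r_n)=d(r_n,x_n)-d(o,x_n)=-R$. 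On the other hand $\phi_x(u)=d(u,x)-d(o,x)\geq -d(o,x)=1-R$ for every $u\in X$. But the points $r_n$ all lie in the compact ball $\bar{B}(o,R)$, so uniform convergence $\phi_{x_n}\to\phi_x$ on $\bar{B}(o,R)$ forces $|\phi_{x_n}(r_n)-\phi_x(r_n)|\to 0$, hence $\phi_x(r_n)\to -R$, contradicting $\phi_x(r_n)\geq 1-R$. Therefore $\{x_n\}$ is bounded.

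Once boundedness is in hand the geodesic structure is no longer needed: given any subsequence of $\{x_n\}$, properness yields a further subsequence $x_{n_j}\to y$ for some $y\in X$, whence $\phi_{x_{n_j}}\to\phi_y$ by continuity of $\phi$ and $\phi_{x_{n_j}}\to\phi_x$ by hypothesis, so $\phi_y=\phi_x$ since $C(X)$ is Hausdorff, and then $y=x$ by injectivity of $\phi$. Since every subsequence of $\{x_n\}$ has a further subsequence converging to $x$, we get $x_n\to x$, which is continuity of $\phi^{-1}$ and completes the proof. The main obstacle is precisely the unbounded case handled above: a continuous injection from a locally compact space need not be an embedding, and it is exactly the fact that a geodesic joining $o$ to a far-away $x_n$ must cross the compact ball $\bar{B}(o,R)$ at a point where $\phi_{x_n}$ takes a value that $\phi_x$ cannot even approach that prevents the $x_n$ from escaping to infinity while $\phi_{x_n}$ converges.
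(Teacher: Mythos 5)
Your proof is correct and takes essentially the same route as the paper: the geodesic hypothesis enters exactly as in the paper's argument, by placing a point of a geodesic toward $x_n$ inside a fixed compact ball where $\phi_{x_n}$ takes a value that the candidate limit function cannot approach, which rules out the $x_n$ escaping to infinity. The remaining differences are minor: you run the geodesic from the base point $o$ rather than from the putative limit point, and you conclude via properness and a subsequence argument where the paper instead adds two convergence estimates to get $2d(x_n,x)\to 0$ directly; both versions are valid.
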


\begin{proof}
 First, we show the following claim.
 \begin{claim0}
  Let $\{x_n\}_{n \in \mathbb{N}}$ be a sequence on $X$ such that 
  $d(o, x_n) \rightarrow \infty$ and 
  $\phi_{x_n}$ converges to 
  $\xi \in \cl\phi(X)$ uniformly on compact sets. 
  Then we show that there is no 
  $y \in X$ such that $\xi = \phi_y$. 
 \end{claim0} 

 We prove this by contradiction. 
 That is, suppose there is some $y \in X$ and
 $\phi_{x_n}$ converges uniformly on a compact set to $\phi_y$. 
 
 Since
 $d(y, x_n) \geq d(o, x_n) - d(o, y)$ 
 from the triangle inequality,
 $d(y, x_n)$ diverges to infinity. 
 We let $\gamma_n \colon [0, d(y, x_n)]\rightarrow X$ 
 be a geodesic connecting $y$ and $x_n$. 
 That is,
 $\gamma_n(0) = y$ and $\gamma_n(d(y, x_n)) = x_n$. 
 We take $x'_n$
 to be a point on the image $\gamma_n([0, d(y, x_n)])$ of the geodesic $\gamma_n$, 
 satisfying $d(y, x'_n) = d(o, y) + 1$. 
 For any $n \in \mathbb{N}$, $x'_n$ is contained in 
 $\bar{B}(y, d(o, y)+1)$. 
 Since $\phi_{x_n}$ converges uniformly to $\phi_y$ on compact sets, 
 the following holds.
 \begin{align*}
  |\phi_y(x'_n) - \phi_{x_n}(x'_n)| 
  &\leq \sup_{u \in \bar{B}(y, d(o, y)+1)}|\phi_y(u) - \phi_{x_n}(u)|\\
  &\rightarrow 0 \text{ as } n \rightarrow \infty.
 \end{align*}
 
 For any $n \in \mathbb{N}$, 
 since $x_n, x'_n, y \in X$ lie on the geodesic $\gamma_n$, we have
 \begin{align*}
  \phi_y(x'_n) - \phi_{x_n}(x'_n) 
  &= \{d(x'_n, y) - d(o, y)\} - \{d(x'_n, x_n) - d(o, x_n)\}\\
  &= 1 - d(x'_n, x_n) + d(o, x_n)\\
  &\geq 1 - d(x'_n, x_n) + d(x_n, y) - d(o, y)\\
  &= 1 + d(y, x'_n) - d(o, y)\\
  &= 2.
 \end{align*}

 This contradicts the results above, so the claim holds.
 Now we show that if a sequence of maps 
 $\{\phi_{x_n}\}_{n \in \mathbb{N}}$ 
 converges uniformly to $\phi_x$ $(x \in X)$ on compact sets,
 a sequence 
 $\{x_n\}_{n \in \mathbb{N}}$
 converges to $x$.

 By the claim, the set $\{x_n\}$ is bounded. Set
 \[
 R \coloneqq \max\{d(o, x), \, \sup_{n \in \mathbb{N}}d(o, x_n)\}.
 \]
 Since
 $\phi_{x_n}$ converges pointwise to $\phi_x$, 
 we have
    \begin{align*}
        |\phi_{x_n}(x) - \phi_x(x)| &= 
     |(d(x, x_n) - d(o, x_n)) - (d(x, x) - d(o, x))|\\
        &= |d(x, x_n) - d(o, x_n) + d(o, x)|\\
        &\rightarrow 0 \text{ as } n \rightarrow \infty.
    \end{align*}
    
     On the other hand, $\phi_{x_n}$ converges uniformly to $\phi_x$ on compact sets, so we obtain
    \begin{align*}
        |\phi_{x_n}(x_n) - \phi_{x}(x_n)| &\leq \sup_{u \in \bar{B}(o, R)}|\phi_{x_n}(u) - \phi_x(u)|\\
        & \rightarrow 0 \text{ as } n \rightarrow  \infty.
    \end{align*}
    
    Thus, we obtain the following
    \begin{align*}
        |\phi_{x_n}(x_n) - \phi_{x}(x_n)| 
        &= |d(x_n, x_n) - d(o, x_n) - d(x_n, x) + d(o, x)| \\
        &= |-d(o, x_n) - d(x_n, x) + d(o, x)|\\
        &= |d(o, x_n) + d(x_n, x) - d(o, x)|\\
        & \rightarrow 0 \text{ as } n \rightarrow \infty.
    \end{align*}
    By triangle inequality, we have
    \begin{align*}
        2d(x_n, x) &\leq |d(x, x_n) - d(o, x_n) + d(o, x)|\\
        &\phantom{+ \quad}
        +|d(o, x_n) + d(x_n, x) - d(o, x)|\\
        & \rightarrow 0 \text{ as } n \rightarrow \infty.
    \end{align*}
\end{proof}


\subsection{Reduced horoboundary}

\begin{definition}
    
Let $(X,d)$ be a proper metric space and let $\partial_h X$ be the horoboundary.
 We define that two functions $\xi$ and $\eta$ in $\partial_h X$ are equivalent,
 denoted by $\xi \sim \eta$, if 
    \[
    \sup_{x \in X}|\xi(x) - \eta(x)| < \infty.
    \]

 This determines an equivalence relation on 
 $\partial_h X$. 
 A \emph{reduced horoboundary of $X$}, 
 denoted by $\partial_h X/\sim$, is the quotient of 
 $\partial_h X$ by the equivalence relation $\sim$ with the quotient topology.
\end{definition}

In general, a geodesic Gromov hyperbolic space does not necessarily coincide with its horoboundary and its ideal boundary. 

On the other hand, in geodesic Gromov hyperbolic spaces,
Webster and Winchester \cite{WW03} showed the following.

\begin{theorem}[{\cite[Theorem 4.5.]{WW03}}]\label{thm:WW}
    Let $(X, d)$ be a proper geodesic Gromov hyperbolic space with the base point $o \in X$. 
    Let $\partial_o X$ be an ideal boundary with respect to the base point $o$ and let $\partial_h X$ be a horoboundary of $X$.
    Then there is a natural continuous quotient map from $\partial_h X$
    onto $\partial_o X$.
\end{theorem}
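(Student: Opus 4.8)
To prove \cref{thm:WW} the plan is to build the map by hand: a horofunction $\xi$ is a limit $\phi_{x_n}\to\xi$ of some sequence $\{x_n\}$ escaping every bounded set, such a sequence will turn out to converge in the bordification $\bar X$, and its limit point in $\partial_o X$ will be declared to be the image of $\xi$. The whole argument is driven by one elementary observation valid in a $\delta$-hyperbolic space: writing the four-point inequality in Gromov-product form, $(a\mid c)_o\ge\min\{(a\mid b)_o,(b\mid c)_o\}-\delta$, and using the exact identity $\phi_x(u)=d(u,x)-d(o,x)=d(o,u)-2(u\mid x)_o$, one gets for every $R\ge 0$: \emph{(a)} if $(x\mid y)_o\ge R+\delta$ then $\sup_{u\in\bar B(o,R)}\lvert\phi_x(u)-\phi_y(u)\rvert\le 2\delta$; \emph{(b)} if $d(o,x)\ge R$ and $\sup_{u\in\bar B(o,R)}\lvert\phi_x(u)-\phi_y(u)\rvert\le\kappa$ then $(x\mid y)_o\ge R-\tfrac12\kappa-\delta$. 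For \emph{(b)} one evaluates at the point $u$ on a geodesic $[o,x]$ with $d(o,u)=R$, where $(u\mid x)_o=R$ exactly, reads off $(u\mid y)_o\ge R-\tfrac12\kappa$ from the hypothesis, and applies the four-point inequality to $x,u,y$. Only direction \emph{(b)} is needed for the present statement; direction \emph{(a)} is the additional ingredient one would use to match the fibres of the map with the equivalence classes defining the reduced horoboundary.

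Next I would construct the map. Given $\xi\in\partial_h X$, the lemma following \cref{def:horoboundary} produces $\{x_n\}$ with $d(o,x_n)\to\infty$ and $\phi_{x_n}\to\xi$. Any such $\{x_n\}$ lies in $S^\infty_o X$: otherwise $(x_{n_k}\mid x_{m_k})_o\le M$ along two subsequences, but for $k$ large both points lie outside $\bar B(o,2M+1)$ and $\sup_{\bar B(o,2M+1)}\lvert\phi_{x_{n_k}}-\phi_{x_{m_k}}\rvert\le 1$, so \emph{(b)} forces $(x_{n_k}\mid x_{m_k})_o>M$, a contradiction. The same argument applied to two sequences $\{x_n\},\{y_n\}$ both converging to $\xi$ gives $(x_n\mid y_n)_o\to\infty$, i.e.\ $\{x_n\}\sim\{y_n\}$. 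Hence
\[
\Psi\colon\partial_h X\longrightarrow\partial_o X,\qquad \Psi(\xi):=[\{x_n\}]\ \text{for any }\{x_n\}\text{ with }d(o,x_n)\to\infty,\ \phi_{x_n}\to\xi,
\]
is well defined.

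Then I would verify surjectivity and continuity. For $\zeta\in\partial_o X$, pick $\{z_n\}\in S^\infty_o X$ representing it; then $d(o,z_n)\to\infty$, so by an equicontinuity argument (the $\phi_x$ are $1$-Lipschitz, vanish at $o$, and $X$ is proper) a subsequence $\phi_{z_{n_k}}$ converges in $\cl\phi(X)$ to some $\xi$, and $\xi\notin\phi(X)$ by the Claim inside the proof of \cref{prop:geodesic-sp-top-emb} — this is where geodesicity is used — so $\xi\in\partial_h X$ and $\Psi(\xi)=[\{z_{n_k}\}]=\zeta$. For continuity, let $\xi_m\to\xi$ in $\partial_h X$ and fix $R\ge\delta+2$; choosing a realizing sequence $\{x^m_n\}_n$ for each $\xi_m$ and one $\{y_n\}_n$ for $\xi$, then for $m$ large (so $\sup_{\bar B(o,2R)}\lvert\xi_m-\xi\rvert\le 1$) and $n$ large one has $\sup_{\bar B(o,2R)}\lvert\phi_{x^m_n}-\phi_{y_n}\rvert\le 3$ and $d(o,x^m_n),d(o,y_n)\ge 2R$, so \emph{(b)} gives $(x^m_n\mid y_n)_o\ge 2R-\tfrac32-\delta\ge R$ eventually in $n$; hence $(\Psi(\xi_m)\mid\Psi(\xi))_o\ge\liminf_n(x^m_n\mid y_n)_o\ge R$ for all large $m$, and since $R$ is arbitrary $(\Psi(\xi_m)\mid\Psi(\xi))_o\to\infty$, i.e.\ $\Psi(\xi_m)\to\Psi(\xi)$ by \cref{idealboundary-compact-metric-space}. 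Finally $\cl\phi(X)$ is compact by the same equicontinuity argument, and $\partial_h X$ is closed in it — a diagonal argument together with the Claim in the proof of \cref{prop:geodesic-sp-top-emb} shows that a limit of horofunctions arising from unbounded sequences again lies outside $\phi(X)$ — so $\partial_h X$ is compact, and a continuous surjection from a compact space onto the Hausdorff space $\partial_o X$ is automatically a quotient map.

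The only genuine obstacle I anticipate is the comparison lemma, and specifically direction \emph{(b)}: that is the single place where both ``geodesic'' and ``$\delta$-hyperbolic'' enter in an essential way, and the remainder is bookkeeping — making sure the estimate survives first the limit $n\to\infty$ defining each horofunction and then the limit along $\{\xi_m\}$. Everything else (equicontinuity/Arzel\`a--Ascoli, the already-established properties of $\phi$, and the standard metrization of $\partial_o X$ recalled in \cref{idealboundary-compact-metric-space}) is routine.
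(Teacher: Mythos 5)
The paper does not prove this statement at all: it is quoted verbatim from Webster--Winchester \cite[Theorem 4.5]{WW03} and used as a black box, so there is no in-paper proof to compare yours with. Judged on its own, your argument is essentially correct and is the standard route (and close in spirit to the one in \cite{WW03}): the identity $\phi_x(u)=d(o,u)-2(u\mid x)_o$ for the \emph{metric} Gromov product, your comparison estimate (b) obtained by evaluating at a point at distance $R$ from $o$ on a geodesic $[o,x]$, well-definedness and continuity of $\Psi$ from (b), surjectivity via Arzel\`a--Ascoli plus the Claim inside the proof of \cref{prop:geodesic-sp-top-emb}, and finally compactness of $\partial_h X$ (closedness in $\cl\phi(X)$ by the diagonal argument) so that a continuous surjection onto the Hausdorff space $\partial_o X$ is automatically a quotient map. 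Two small points to tidy up: in the argument that a realizing sequence lies in $S^\infty_o X$, the bound $2M+\tfrac12-\delta>M$ only holds once $M\geq\delta$, so you should first replace the bound $M$ by $\max\{M,\delta\}$ (harmless, since a bound by $M$ is also a bound by anything larger); and note that your $(\cdot\mid\cdot)_o$ is the classical product $\tfrac12(d(o,x)+d(o,y)-d(x,y))$, not the bicombing-based product the paper uses in \cref{section:ideal-boundary} --- for a proper geodesic hyperbolic space the two are uniformly comparable and define the same boundary with the same topology, so the argument goes through, but the identity $\phi_x(u)=d(o,u)-2(u\mid x)_o$ is only valid for the metric product and this should be said explicitly.
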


It is well known that the reduced horoboundary coincides with the ideal boundary in proper geodesic Gromov hyperbolic spaces as a corollary of \cref{thm:WW}.

The same result does not hold for coarsely convex spaces.
For example, we have seen that all normed spaces are geodesic coarsely convex spaces in \cref{ex:norm-is-cc}.
There are examples such as the following. 
Here, we remark that an ideal boundary of a proper coarsely convex space is compact and metrizable

\begin{example}
 \label{ex:reduced-horo-non-Hausdorff}
 The reduced horoboundary of $(\mathbb{R}^2, l^1)$ is not Hausdorff.
\end{example}

\section{Open cone and horoboundary}
\label{section:open-cone}
\begin{definition}
    Let $(Y,d_Y)$ be a compact metric space with diameter less than or equal to 1. 
 Let the open cone over $Y$ be the following quotient topological space.
    \[
    \mathcal{O}Y = ([0, \infty) \times Y)/(\{0\} \times Y)
    \]
    For $(t, y) \in [0, \infty] \times Y$, we denote by $tx$ the point in $\mathcal{O}Y$ represented by $(t, y)$. We define a metric $d_{\mathcal{O}Y}$ on $\mathcal{O}Y$ by
    \[
    d_{\mathcal{O}Y}(tx, sy) \coloneqq |t-s| + \min\{t,s\}d_Y(x, y).
    \]
    Set $o \coloneqq 0y \in \mathcal{O}Y$ as the base point of 
    $\mathcal{O}Y$.
\end{definition}

\begin{proposition}
 Let $(Y,d_Y)$ be a compact metric space with diameter less than 
 or equal to 1 and $\mathcal{O}Y$ be an open cone over $Y$. Then $\mathcal{O}Y$ is a proper metric space.
\end{proposition}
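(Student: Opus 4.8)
The plan is to prove properness by realizing every closed ball of $\mathcal{O}Y$ as the continuous image of a compact space. Write $q \colon [0,\infty)\times Y \to \mathcal{O}Y$ for the canonical projection, $(t,x)\mapsto tx$. First I would record the elementary computation
\[
d_{\mathcal{O}Y}(o, tx) = d_{\mathcal{O}Y}(0y, tx) = |0-t| + \min\{0,t\}\,d_Y(y,x) = t ,
\]
so that for $R>0$ the closed ball $\bar{B}(o,R)$ is exactly $\{\,tx\in\mathcal{O}Y : 0\le t\le R\,\} = q([0,R]\times Y)$.

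Next I would check that $q$ is continuous when $\mathcal{O}Y$ is given the metric topology of $d_{\mathcal{O}Y}$. For $(t,x),(s,y)\in[0,\infty)\times Y$,
\[
d_{\mathcal{O}Y}(q(t,x), q(s,y)) = |t-s| + \min\{t,s\}\,d_Y(x,y) \le |t-s| + t\,d_Y(x,y),
\]
and the right-hand side tends to $0$ as $(s,y)\to (t,x)$ in the product topology; hence $q$ is continuous at each point. Since $[0,R]$ is compact and $Y$ is a compact metric space, the product $[0,R]\times Y$ is compact, and therefore $\bar{B}(o,R) = q([0,R]\times Y)$ is compact.

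Finally, to conclude properness it suffices to observe that an arbitrary closed ball $\bar{B}(p,r)$ satisfies $\bar{B}(p,r)\subseteq \bar{B}\bigl(o,\, d_{\mathcal{O}Y}(o,p)+r\bigr)$ by the triangle inequality; being a closed subset of a compact set, $\bar{B}(p,r)$ is itself compact. Thus all closed balls of $(\mathcal{O}Y, d_{\mathcal{O}Y})$ are compact, i.e. $\mathcal{O}Y$ is proper. The metric axioms for $d_{\mathcal{O}Y}$ are already asserted in the definition, so no work is needed there.

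There is really no serious obstacle here; the only point that needs a moment's care is the continuity of $q$ into the \emph{metric} space $\mathcal{O}Y$, i.e. the fact that the $d_Y$-term in $d_{\mathcal{O}Y}$ is damped by the radial coordinate and so causes no trouble near the cone point. The displayed estimate above handles this. (If desired, one can also note along the way that the metric topology on $\mathcal{O}Y$ coincides with the quotient topology, since $q$ is a continuous surjection that is closed on each compact slab $[0,R]\times Y$, but this is not needed for the statement.)
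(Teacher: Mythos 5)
Your proof is correct and follows essentially the same route as the paper: both arguments reduce properness to the compactness of the slab $\{tx : 0 \le t \le R\}$, obtained from the compact set $[0,R]\times Y$ (the paper via the quotient, you via the continuous surjection $q$). Your explicit check that $q$ is continuous into $(\mathcal{O}Y, d_{\mathcal{O}Y})$ is a small point the paper leaves implicit, but it is the same argument.
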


\begin{proof}
Let $K \subset \mathcal{O}Y$ be a closed bounded subset.
There exists $R\geq 0$ such that $K$ is contained in 
$[0,R]\times Y/(\{0\}\times Y)$. Since $[0,R]\times Y$ is compact,
the quotient $[0,R]\times Y/(\{0\}\times Y)$ is compact.
Therefore, $K$ is compact.
\end{proof}

As in the following example, $\mathcal{O}Y$ is not necessarily a geodesic space.

\begin{example}
   We consider a two-points set $Y=\{a, b\}$ and define 
   $d \colon Y \times Y \rightarrow \mathbb{R}_{\geq 0}$ 
   to satisfy $d(a,a) = 0, d(b,b) = 0, d(a,b) = d(b,a) = 1$. 
   Then $d$ is a metric on $Y$, 
   and $(Y,d)$ is a compact metric 
   space with diameter less than or equal to 1. 
   
   Then $\mathcal{O}Y$ is not a geodesic space. 
   We prove this by contradiction.
   
   We assume there exists some geodesic
   $\gamma \colon [0, 1] \rightarrow \mathcal{O}Y$
   such that $\gamma(0) = 1a$ and $\gamma(1) = 1b$.
   In this case, there is a map 
   $f \colon [0,1] \rightarrow \mathbb{R}_{\geq 0}$ and 
   a map $g \colon [0,1] \rightarrow Y$
   such that $\gamma(t) = f(t)g(t)$ for any $t \in [0,1]$.
   
   We show $f \colon [0,1]\rightarrow \mathbb{R}_{\geq 0}$ is a continuous function. 
   By the continuity of $\gamma \colon [0,1]\rightarrow \mathcal{O}Y$, 
   for any $t_1 \in [0,1]$ and any $\epsilon >0$, 
   there exists some $\delta >0$ and 
   for any $t \in [0,1]$ with $|t - t_1| < \delta$,
   we have
   \[
       d_{\mathcal{O}Y}(\gamma(t),\gamma(t_1)) = |f(t) - f(t_1)| + \min\{f(t), f(t_1)\}d(g(t), g(t_1)) < \epsilon. 
   \]
   So we have $|f(t) - f(t_1)| <  \epsilon$.
   
   By the compactness of $[0,1]$ and the continuity of $f$, 
   there exists minimum value $m \coloneqq \min_{t \in [0,1]}f(t)$. Then, the minimum value $m$ is equal to 0.
   We prove this by contradiction.
   Since $f(t) \geq 0$ for all $t \in [0,1]$, 
   we may assume that $m > 0$. 
   From the continuity of 
   $\gamma \colon [0,1]\rightarrow \mathcal{O}Y$, 
   for any ${t_1}' \in [0,1]$ and any $\epsilon' > 0$, 
   there exists some $\delta' > 0$ and for any $t' \in [0,1]$ with $|t' - {t_1}'| < \delta$, 
   we have
   \[
       d_{\mathcal{O}Y}(\gamma(t'), \gamma({t_1}'))= |f(t') - f({t_1}')| + \min\{f(t'), f({t_1}')\}d_Y(g(t'), g({t_1}')) < m\epsilon. 
   \]
    So we have 
    \[
    md_Y(g(t'), g({t_1}')) < m\epsilon.
    \]
    By the hypothesis of $m > 0$, we get 
    \[
    d_Y(g(t'), g({t_1}')) < \epsilon.
    \]
    
   Therefore, $g \colon [0,1]\rightarrow Y$ 
   is a continuous map. 
   Since 
   $[0,1]$ is connected and $Y$ is discrete,
   the map $g \colon [0,1] \rightarrow Y$ is a constant map. 
   We assume that $g(t)=1a$ for any $t \in [0,1]$. 
   Then, we have
   \begin{align*}
       0 = d_{\mathcal{O}Y}(\gamma(1), \gamma(1)) &= d_{\mathcal{O}Y}(1b, f(1)g(1)) = d_{\mathcal{O}Y}(1b, f(1)a) \\
       &= |1 - f(1)| + \min\{1, f(1)\}d_Y(a,b)\\
       &\geq \min\{1, m\} > 0.
   \end{align*}
   This is contradiction. 
   So we have $m = 0$. 
   In particular, there exists some $t_0 \in [0,1]$ such that $f(t_0) = 0$. It follows that $\gamma(t_0) = f(t_0)g(t_0) = o$. 
   Since $1b \neq o$, we have $t_0 \neq 1$. 
   On the other hand, since $\gamma \colon [0,1]\rightarrow \mathcal{O}Y$ is a geodesic, 
   we have $t_0 
   = d_{\mathcal{O}Y}(\gamma(0), \gamma(t_0)) 
   = d_{\mathcal{O}Y}(1a, o) = 1$. 
   This is a contradiction.
\end{example}

We will show that $\mathcal{O}Y$ can be topologically embedded in 
$C(\mathcal{O}Y)$.
As stated in the above,  $\mathcal{O}Y$ is not necessarily geodesic spaces.
So, we cannot apply directly \cref{prop:geodesic-sp-top-emb}.

\begin{proposition}
\label{prop:open-cone-top-emb}
 Let $(Y, d_Y)$ be a compact metric space with diameter less than or equal to 1 
 and $\mathcal{O}Y$ be an open cone over $Y$. 
    Let $C(\mathcal{O}Y)$ be the space of continuous functions on $\mathcal{O}Y$ 
    equipped with the topology of uniform convergence on compact sets. 
    For each $sx \in \mathcal{O}Y$,
    we consider the function 
    $\phi_{sx} \colon \mathcal{O}Y \rightarrow \mathbb{R}$ as following,
    \begin{align*}
        \phi_{sx} (ty) &\coloneqq d_{\mathcal{O}Y} (ty, sx) - d_{\mathcal{O}Y} (o, sx)\\
        &= d_{\mathcal{O}Y} (ty, sx) - s
    \end{align*}
    Then the map 
    $\phi \colon \mathcal{O}Y \rightarrow C(\mathcal{O}Y), sx \mapsto \phi_{sx}$ is a topological embedding.
\end{proposition}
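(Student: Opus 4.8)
The plan is to verify the two ingredients of "topological embedding" in turn: that $\phi$ is a continuous injection, and that $\phi^{-1}$ on the image is continuous. For the first I would simply observe that $\phi_{sx} = d_{\mathcal{O}Y}(-,sx) - d_{\mathcal{O}Y}(o,sx)$ is exactly the map attached to the proper metric space $\mathcal{O}Y$ in \cref{def:horoboundary}, so \cref{prop:phi-is-inj-and-conti} already gives injectivity and continuity; concretely the triangle inequality gives $\abs{\phi_{sx}(u)-\phi_{s'x'}(u)}\le 2d_{\mathcal{O}Y}(sx,s'x')$ for all $u$, so $\phi$ is $2$-Lipschitz into $C(\mathcal{O}Y)$ with the sup-norm, and injectivity follows by evaluating an equality $\phi_{sx}=\phi_{s'x'}$ at the points $sx$ and $s'x'$. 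It then remains to prove: if $\{p_n\}$ is a sequence in $\mathcal{O}Y$ with $\phi_{p_n}\to\phi_p$ uniformly on compact sets for some $p=sx\in\mathcal{O}Y$, then $p_n\to p$ in $\mathcal{O}Y$. Since $\mathcal{O}Y$ is proper, hence locally compact and $\sigma$-compact, $C(\mathcal{O}Y)$ is metrizable, so checking this sequential statement suffices.

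The crucial point, and the one I expect to be the main obstacle, is to show $\{p_n\}$ is bounded. We cannot invoke \cref{prop:geodesic-sp-top-emb} here because $\mathcal{O}Y$ need not be geodesic; the substitute is that the radial segments $r\mapsto rz$, $r\in[0,t]$, are genuine geodesics from $o$ to $tz$, and I will use these to manufacture bounded "witness" points, mimicking the contradiction in the proof of \cref{prop:geodesic-sp-top-emb}. Suppose $\{p_n\}$ were unbounded; passing to a subsequence we may assume $s_n\coloneqq d_{\mathcal{O}Y}(o,p_n)\to\infty$, and write $p_n=s_nx_n$ with $s_n>s+1$ for large $n$. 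Put $q_n\coloneqq (s+1)x_n$, which lies in the compact ball $\bar B(o,s+1)$. A short computation with the cone metric gives $\phi_{p_n}(q_n)=d_{\mathcal{O}Y}((s+1)x_n,s_nx_n)-s_n=(s_n-s-1)-s_n=-(s+1)$, while $\phi_p(q_n)=d_{\mathcal{O}Y}((s+1)x_n,sx)-s=1+s\,d_Y(x_n,x)-s\ge 1-s$, so $\phi_p(q_n)-\phi_{p_n}(q_n)\ge 2$ for all large $n$. This contradicts uniform convergence $\phi_{p_n}\to\phi_p$ on the compact set $\bar B(o,s+1)$, so $\{p_n\}$ is bounded.

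Finally I would close the argument by a compactness–Hausdorffness step: choose $R\ge d_{\mathcal{O}Y}(o,p)$ with $\{p_n\}\subset\bar B(o,R)$, which is compact by properness of $\mathcal{O}Y$. The restriction $\phi\rest\bar B(o,R)$ is a continuous injection from a compact space into the Hausdorff space $C(\mathcal{O}Y)$, hence a homeomorphism onto its image; applying its continuous inverse to the convergence $\phi_{p_n}\to\phi_p$ yields $p_n\to p$. Thus $\phi^{-1}$ is (sequentially, hence) continuous on $\phi(\mathcal{O}Y)$, and $\phi$ is a topological embedding.
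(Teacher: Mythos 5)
Your proof is correct, but it executes the two key steps differently from the paper. For the crucial ``points escaping to infinity cannot converge to some $\phi_p$'' step, the paper fixes the limit candidate $z=ty$ and splits into the cases $t=0$ and $t\neq 0$; in the second case it first evaluates at $z$ to deduce $d_Y(y_n,y)\to 0$ and then evaluates at the witness $(t+1)y$ on the ray through the limit direction to reach the contradiction $|\phi_{z_n}((t+1)y)-\phi_z((t+1)y)|\to 2$. You instead place the witnesses $q_n=(s+1)x_n$ on the radial rays through the points $p_n=s_nx_n$ themselves, where the cone metric gives the exact values $\phi_{p_n}(q_n)=-(s+1)$ and $\phi_p(q_n)\geq 1-s$, so a single computation yields the gap $\geq 2$ with no case analysis and no intermediate convergence of directions; this is arguably closer in spirit to the witness construction in the proof of \cref{prop:geodesic-sp-top-emb}, transplanted to the cone via its radial geodesics. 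For the conclusion, the paper simply says ``the rest of the proof can be shown as in \cref{prop:geodesic-sp-top-emb},'' i.e.\ it repeats the explicit triangle-inequality estimates giving $2d_{\mathcal{O}Y}(p_n,p)\to 0$, whereas you finish abstractly: once $\{p_n\}$ is trapped in a compact ball, the restriction of $\phi$ is a continuous injection from a compact space into the Hausdorff space $C(\mathcal{O}Y)$, hence a homeomorphism onto its image, and $p_n\to p$ follows. You also make explicit the metrizability of $C(\mathcal{O}Y)$ (via properness, hence hemicompactness, of $\mathcal{O}Y$) that justifies reducing continuity of $\phi^{-1}$ to sequences — a point the paper leaves implicit. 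Both routes are sound; the paper's stays elementary and parallel to the geodesic case, while yours is shorter and avoids the case split, at the cost of invoking a bit more general topology.
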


\begin{proof}
    Let $\{z_n\}_{n \in \mathbb{N}} = \{t_n y_n\}_{n \in \mathbb{N}}, (t_n \in \mathbb{R}_{\geq 0},\: y_n \in Y)$ 
    be a sequence in $\mathcal{O}Y$ 
    escaping to infinity, that is $t_n \rightarrow \infty$.

 We claim that no subsequence of $\phi_{z_n}$ 
 converges to a function $\phi_z$ with $z \coloneqq ty \in \mathcal{O}Y$. 
Suppose contrarily, there exists a subsequence of $\phi_{z_n}$ which converges to $\phi_z$ with $z=ty \in \mathcal{O}Y$.
By replacing the subsequence, we suppose that  $\phi_{z_n}$
converges to $\phi_z$.

First we consider the case $t=0$. 

We have $\phi_z(1y) = \phi_o(1y) = 1$.
 On the other hand, for any $n \in \mathbb{N}$ with 
 $t_n\geq 1$, 
 we have $\phi_{z_n}(1y) = -1 + d_Y(y_n, y) \leq 0$.
 This contradicts that $\phi_{z_n}$ converges pointwise to $\phi_o$.
 
    
 Next we consider the case $t \neq 0$. 

    Let $R>0$ be a constant that satisfies $R>t+1$. 
    Set $x = (t+1)y \in \mathcal{O}Y $.

 Since the topology of $C(\mathcal{O}Y)$ is given by the uniform convergence on compact sets,
 $\phi_{z_n}$ converges uniformly on $\bar{B}(0, R)$ to $\phi_z$. 
 By taking $n \in \mathbb{N}$ large enough, 
 we have
 \begin{align*}
  |\phi_{z_n}(z) - \phi_z(z)| 
  &= |(d_{\mathcal{O}Y}(z_n, z)- d_{\mathcal{O}Y}(0, z_n)) 
  - (d_{\mathcal{O}Y}(z, z)- d_{\mathcal{O}Y}(0, z))| \\
  &= |(|t_n - t| + \min\{t_n, t\}d_Y(y_n, y) - t_n)-(-t)|\\
  &= td_Y(y_n, y) \rightarrow 0 \text{ as } n \rightarrow \infty.
 \end{align*}
 Since $t \neq 0$, we have $d_Y(y_n, y) \rightarrow 0 
 \text{ as } n \rightarrow \infty$. 
 On the other hand, for $n\in \N$ with $t_n\geq t+1$,
 \begin{align*}
  &|\phi_{z_n}(x) - \phi_z(x)|\\
  &= |(|t_n - (t+1)| + \min\{t_n,t\}d_Y(y_n, y) - t_n)-
  (1+td_Y(y,y)-t)|\\
  &= |-2+ td_Y(y_n, y)|
  \rightarrow 2 \neq 0 \text{ as } n \rightarrow \infty.
 \end{align*}
This contradicts the fact that $\phi_{z_n}$ converges pointwise to $\phi_z$. 
 This completes the proof of the claim. 

 The rest of the proof can be shown as in \cref{prop:geodesic-sp-top-emb}.

\end{proof}

\begin{proposition}\label{prop:open-cone-convergence}
    Let $(Y,d_Y)$ be a compact metric space with diameter less than or equal to 1, and let
    $\mathcal{O}Y$ be an open cone over $Y$. 
    Let $\{t_n y_n\}_{n \in \mathbb{N}}$ be a sequence satisfying 
    $t_n \rightarrow \infty$ on $\mathcal{O}Y$. 
    If $\phi_{t_n y_n} \in C(\mathcal{O}Y)$ converges uniformly on compact sets to 
    $\xi \in \mathrm{cl}\{\phi_sx \: | \: sx \in \mathcal{O}Y \}$, 
    then $y_n$ converges to some point on $Y$.
\end{proposition}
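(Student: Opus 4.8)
The plan is to combine the sequential compactness of $Y$ with an explicit evaluation of the functions $\phi_{t_n y_n}$ along ``rays''. First I would fix a point $sx \in \mathcal{O}Y$ and note that, once $n$ is large enough that $t_n > s$, one has $|s - t_n| = t_n - s$ and $\min\{s, t_n\} = s$, so that
\[
\phi_{t_n y_n}(sx) = d_{\mathcal{O}Y}(sx, t_n y_n) - t_n = (t_n - s) + s\, d_Y(x, y_n) - t_n = s\bigl(d_Y(x, y_n) - 1\bigr).
\]
Thus, for large $n$, the value of $\phi_{t_n y_n}$ at $sx$ depends only on the distance $d_Y(x, y_n)$, which lies in the compact interval $[0,1]$.

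Next, since $(Y, d_Y)$ is a compact metric space it is sequentially compact, so every subsequence of $\{y_n\}_{n \in \mathbb{N}}$ has a convergent sub-subsequence; to prove that $\{y_n\}$ converges it therefore suffices to show that any two subsequential limits coincide. Suppose $y_{n(k)} \to y$ and $y_{m(j)} \to y'$ in $Y$. Using the displayed identity together with the hypothesis that $\phi_{t_n y_n}$ converges to $\xi$ (in particular pointwise), passing to the limit along the two subsequences gives, for every $s \geq 0$ and every $x \in Y$,
\[
\xi(sx) = s\bigl(d_Y(x, y) - 1\bigr) \qquad \text{and} \qquad \xi(sx) = s\bigl(d_Y(x, y') - 1\bigr).
\]
Taking $s = 1$ yields $d_Y(x, y) = d_Y(x, y')$ for all $x \in Y$, and setting $x = y$ gives $0 = d_Y(y, y')$, so $y = y'$. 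Hence $\{y_n\}$ has a unique subsequential limit, and by sequential compactness of $Y$ it converges to that limit.

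I do not expect a serious obstacle in this argument. The only points needing a little care are the standard reduction ``unique subsequential limit in a compact metric space implies convergence'' and the remark that the relevant quantities $d_Y(x, y_n)$ stay in $[0,1]$, so that convergence along subsequences is automatic and the pointwise limit of $\phi_{t_n y_n}$ can be read off directly. The computation $\phi_{t_n y_n}(sx) = s(d_Y(x, y_n) - 1)$, valid for all sufficiently large $n$, is what drives the whole proof.
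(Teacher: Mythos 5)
Your proof is correct, but it follows a different route from the paper's. The paper fixes the moving test point $1y_n$ and compares $\phi_{t_m y_m}(1y_n)$ with $\phi_{t_n y_n}(1y_n)$: uniform convergence on the compact ball $\bar{B}(o,1)$ forces this difference to $0$, while a direct computation shows it equals $d_Y(y_m,y_n)$, so $\{y_n\}$ is Cauchy and converges by completeness of the compact space $Y$. Your argument instead evaluates at a \emph{fixed} point $sx$, uses only pointwise convergence to identify the limit along any convergent subsequence of $\{y_n\}$ as $\xi(sx)=s(d_Y(x,y)-1)$, and then deduces convergence of the whole sequence from sequential compactness of $Y$ together with uniqueness of subsequential limits (taking $s=1$, $x=y$ to force $d_Y(y,y')=0$). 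What each buys: the paper's Cauchy argument exploits uniform convergence on compacts (essential there, since the test point depends on $n$) and is self-contained in one estimate; yours uses the hypothesis more weakly (pointwise suffices) at the price of the compactness/subsequence reduction, and as a bonus it explicitly identifies $\xi$ as the function $sx\mapsto s(d_Y(x,y)-1)$, which is exactly the computation the paper redoes later when proving surjectivity of $F$ in its Proposition 5.6. Both proofs are complete and correct; just make sure to state the standard fact that a sequence in a compact metric space with a unique subsequential limit converges, as you do.
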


\begin{proof}
 Since $Y$ is compact, it is enough to show that 
 $\{y_n\}_{n \in \mathbb{N}}$ is a Cauchy sequence.
 
    Let $m$ and $n$ be arbitrary natural numbers. 
    Since $\phi_{t_n y_n}$ converges uniformly to 
    $\xi \in \partial_h \mathcal{O}Y$  on compact sets, 
    we have
    \begin{align*}
        |\phi_{t_m y_m}(1y_n) - \phi_{t_n y_n}(1y_n)| &\leq |\phi_{t_m y_m}(1y_n) - \xi(1y_n)| + |\phi_{t_n y_n}(1y_n) - \xi(1y_n)|\\
        &\leq \sup_{u \in \bar{B}(o, 1)}|\phi_{t_m y_m}(u) - \xi(u)| + \sup_{u \in \bar{B}(o, 1)}|\phi_{t_n y_n}(u) - \xi(u)|\\
        &\rightarrow 0 \text{ as } m, n \rightarrow \infty.
    \end{align*}
    On the other hand, by taking $m$ and $n$ large enough, 
    we have
    \begin{align*}
        |\phi_{t_m y_m}(1y_n) - \phi_{t_n y_n}(1y_n)| &= |d_{\mathcal{O}Y}(1y_n, t_m y_m) - d_{\mathcal{O}Y}(o, t_m y_m) - d_{\mathcal{O}Y}(1y_n, t_n y_n) + d_{\mathcal{O}Y}(o, t_n y_n)|\\
        &= |(t_m - 1) + d_Y(y_m, y_n) - t_m - (t_n -1) - d_Y(y_n, y_n) + t_n|\\
        &= d_Y(y_m, y_n).
    \end{align*}
    So we have $d_Y(y_m, y_n) \rightarrow 0 \text{ as }  m, n \rightarrow \infty$.
\end{proof}

\begin{proposition}\label{prop:horoboundary-of-open-cone}
     Let $(Y,d_Y)$ be a compact metric space with diameter less than or equal to 1.
     Let $(\mathcal{O}Y, d_{\mathcal{O}Y})$ be an open cone over $Y$.
     Then $Y$ and $\partial_h \mathcal{O}Y$ are homeomorphic.
\end{proposition}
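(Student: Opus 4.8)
I would construct an explicit homeomorphism $\Psi\colon Y \to \partial_h\mathcal{O}Y$ by sending $y\in Y$ to the limit of the horofunctions $\phi_{ty}$ as $t\to\infty$. The first task is to show this limit exists in $C(\mathcal{O}Y)$. Fix $y\in Y$ and compute, for $sx\in\mathcal{O}Y$ and $t\ge s$,
\[
\phi_{ty}(sx) = d_{\mathcal{O}Y}(sx, ty) - t = (t-s) + \min\{s,t\}d_Y(x,y) - t = -s + s\,d_Y(x,y),
\]
which is eventually independent of $t$. Hence $\phi_{ty}$ converges pointwise, and in fact uniformly on each ball $\bar B(o,R)$, to the function
\[
b_y(sx) \coloneqq s\bigl(d_Y(x,y) - 1\bigr).
\]
So $b_y \in \cl\phi(\mathcal{O}Y)$. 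Since $b_y$ is unbounded (take $x=y$ and $s\to\infty$, giving $b_y(sy)=-s\to-\infty$, while any $\phi_{sx}$ is bounded below by $-d_{\mathcal{O}Y}(o,sx)=-s$ only on... more cleanly: a point of $\phi(\mathcal{O}Y)$ would have to equal $\phi_z$ for some $z$, contradicting the claim proved in \cref{prop:open-cone-top-emb} that no sequence $\phi_{z_n}$ with $t_n\to\infty$ converges to a $\phi_z$), we get $b_y\in\partial_h\mathcal{O}Y$. Define $\Psi(y)\coloneqq b_y$.

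Next I would check $\Psi$ is a continuous bijection onto $\partial_h\mathcal{O}Y$. Surjectivity: any $\xi\in\partial_h\mathcal{O}Y$ is a limit of $\phi_{t_n y_n}$ with $t_n\to\infty$ (by the Lemma stating points escape every bounded set, together with properness so that a subsequence of $z_n=t_ny_n$ either stays bounded—excluded—or has $t_n\to\infty$); by \cref{prop:open-cone-convergence}, $y_n\to y$ for some $y\in Y$, and then a short estimate using $|d_Y(x,y_n)-d_Y(x,y)|\le d_Y(y_n,y)$ shows $\phi_{t_n y_n}\to b_y$ uniformly on compacts, so $\xi = b_y = \Psi(y)$. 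Injectivity: if $b_y = b_{y'}$ then evaluating at $sx$ for all $s,x$ forces $d_Y(x,y)=d_Y(x,y')$ for all $x$; putting $x=y$ gives $0 = d_Y(y,y')$, so $y=y'$. Continuity of $\Psi$: if $y_n\to y$ in $Y$ then for $sx\in\bar B(o,R)$, $|b_{y_n}(sx)-b_y(sx)| = s|d_Y(x,y_n)-d_Y(x,y)| \le R\,d_Y(y_n,y)\to 0$, uniformly over $\bar B(o,R)$.

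Finally, since $Y$ is compact and $\partial_h\mathcal{O}Y$ is Hausdorff (as a subspace of the Hausdorff space $C(\mathcal{O}Y)$), a continuous bijection $\Psi\colon Y\to\partial_h\mathcal{O}Y$ is automatically a homeomorphism. This disposes of the proof. The one genuinely delicate point is surjectivity: I must be sure that an arbitrary horofunction arises from a sequence with $t_n\to\infty$ and not, say, from a sequence where $t_n$ is bounded but $y_n$ misbehaves—but boundedness of $t_n$ would keep $z_n$ in a bounded (hence, by properness, relatively compact) set, so after passing to a subsequence $z_n\to z\in\mathcal{O}Y$ and $\phi_{z_n}\to\phi_z$ by \cref{prop:open-cone-top-emb}, forcing $\xi=\phi_z\in\phi(\mathcal{O}Y)$, a contradiction. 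Everything else is the routine bookkeeping already modeled in \cref{prop:open-cone-convergence,prop:open-cone-top-emb}.
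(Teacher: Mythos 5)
Your proposal is correct and follows essentially the same route as the paper: the map $y\mapsto b_y$ with $b_y(sx)=-s+s\,d_Y(x,y)$ is exactly the paper's map $F$, obtained as the limit of $\phi_{ty}$, with surjectivity via \cref{prop:open-cone-convergence}, membership in $\partial_h\mathcal{O}Y$ via the claim in \cref{prop:open-cone-top-emb}, and the homeomorphism concluded from compactness of $Y$ and Hausdorffness of $C(\mathcal{O}Y)$. The only difference is cosmetic: you spell out why the approximating sequence may be assumed to have $t_n\to\infty$, which the paper handles implicitly through the lemma that points of a sequence converging to a horofunction eventually leave every bounded set.
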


\begin{proof}
    We define 
    $F \colon Y \rightarrow C(\mathcal{O}Y)$
    as follows,
    \[
    y \mapsto (F(y)(sx) \coloneqq -s + sd(x, y)).
    \]
    For any $y \in Y$, we show $F(y) \in \partial_h \mathcal{O}Y$.

 We observe that a sequence $\{\phi_{ny}\}_{n\in \N}$
 converges to $F(y)$.
 We fix $R > 0$. For all $n > R$, we have
    \begin{align*}
        &\sup_{sx \in \bar{B}(o, R)}|\phi_{n y}(sx) - F(y)(sx)|\\
        &= \sup_{sx \in \bar{B}(o, R)}
        |\{d_{\mathcal{O}Y}(sx, ny) - d_{\mathcal{O}Y}(o, ny)\}
        - \{-s + sd_Y(x, y)\}|\\
        &= \sup_{sx \in \bar{B}(o, R)}
        |\{n - s + sd_Y(x, y) - n\} - \{-s + sd_Y(x, y)\}|\\
        &= 0.
    \end{align*}
    
    Thus, the sequence of maps 
    $\{\phi_{n y}\}_{n \in \mathbb{N}}$
    converges uniformly to $F(y)$ on compact sets.
    Since the sequence 
    $\{n y\}_{n \in \mathbb{N}}$ is unbounded,
    we have 
    $F(y) = \lim_{n \rightarrow \infty}\phi_{n y} \in \partial_h \mathcal{O}Y$
    by \cref{prop:open-cone-top-emb}.
    
    We show $F \colon Y \rightarrow \partial_h \mathcal{O}Y$ is continuous.

    We take a sequence 
    $\{y_n\}_{n \in \mathbb{N}} \subset Y$
    such that 
    $y_n$ converges to some $y \in Y$.
    We take $L > 0$ arbitrarily.
    Then we have
    \begin{align*}
        &\sup_{sx \in \bar{B}(o, L)}|F(y_n)(sx) - F(y)(sx)|\\
        &= \sup_{sx \in \bar{B}(o, L)}
        |\{-s + sd_Y(x, y_n)\} - \{-s + sd_Y(x, y)\}|\\
        &\leq \sup_{sx \in \bar{B}(o, L)}sd_Y(y_n, y)\\
        &\leq Ld_Y(y_n, y) \rightarrow 0 \text{ as } n \rightarrow \infty.
    \end{align*}
    Thus, a sequence of maps
    $\{F(y_n)\}_{n \in \mathbb{N}}$
    converges uniformly to $F(y)$ on compact sets,
    so $F \colon Y \rightarrow \partial_h \mathcal{O}Y$ is continuous.
    
    We show $F \colon Y \rightarrow \partial_h \mathcal{O}Y$ is injective.
    For any $y, z \in Y$, we assume $F(y) = F(z)$.
    Then we have
    \[
    F(y)(1z) = -1 + d_Y(y, z) = -1 = F(z)(1z).
    \]
    Therefore, we have $d_Y(y, z) = 0$, that is, 
    $F \colon Y \rightarrow \partial_h \mathcal{O}Y$ is injective.
    
    We show $F \colon Y \rightarrow \partial_h \mathcal{O}Y$ is surjective.
    We take $\xi \in \partial_h \mathcal{O}Y$ arbitrarily.
    Then, there is a sequence 
    $\{t_n y_n\}_{n \in \mathbb{N}} \subset \mathcal{O}Y$
    such that 
    $t_n \rightarrow \infty$ as $n \rightarrow \infty$
    and 
    a sequence of maps 
    $\{\phi_{t_n y_n}\}_{n \in \mathbb{N}}$
    converges uniformly to $\xi$ on compact sets.
    By \cref{prop:open-cone-convergence}, the sequence 
    $\{y_n\}_{n \in \mathbb{N}}$
    converges to some point $y \in Y$.
    For any $sx \in \mathcal{O}Y$, we have
    \begin{align*}
        \xi(sx) &= \lim_{n \rightarrow \infty}\phi_{t_n y_n}(sx)\\
                &= \lim_{n \rightarrow \infty}
                    d_{\mathcal{O}Y}(sx, t_n y_n) - d_{\mathcal{O}Y}(o, t_n y_n)\\
                &= \lim_{n \rightarrow \infty}
                    t_n - s + sd_Y(x, y_n) - t_n\\
                &= -s + sd(x, y)\\
                &=F(y)(sx).
    \end{align*}
    Therefore, we have $\xi = F(y)$, that is, 
    $F \colon Y \rightarrow \partial_h \mathcal{O}Y$ is surjective.
    Since $Y$ is compact and $\partial_h \mathcal{O}Y$ is Hausdorff,
    $F \colon Y \rightarrow \partial_h \mathcal{O}Y$ is a homeomorphism.
\end{proof}

By \cref{idealboundary-compact-metric-space}, an 
ideal boundary of a proper coarsely convex space is a
compact metric space with its diameter if less than or equal to 1.
Applying \cref{prop:horoboundary-of-open-cone}, 
we obtain \cref{thm:horoboundary-of-opencone}.


\section{Cone metric and horoboundary}
\label{section:cone-metric}
 Andreev defined a cone metric for Busemann spaces \cite{And18}. 
 In this section, we construct a cone metric for coarsely convex spaces. 
 Then we construct the horoboundary using the cone metric.

\subsection{Cone metric}

For $t\in \R$, we denote by $\lfloor t \rfloor$ the greatest integer
less than or equal to $t$.

\begin{definition}\label{def:qg-cone-metric}
Let $(X, d)$ be a $(\lambda, k, E, C, \theta)$-coarsely convex space with a base point $o \in X$. Let $\Gamma$ be a $(\lambda, k, E, C, \theta)$-coarsely convex bicombing on $X$.
 For given points $x, y \in X$.
    The cone metric $d_c(x, y)$ is defined by
        \begin{align*}
            d_c(x, y) \coloneqq &|d(o, x) - d(o, y)| \\
            & \: + d(\texttt{rp}\Gamma(o, x, \lfloor\min\{d(o, x), d(o, y)\}\rfloor), \texttt{rp}\Gamma(o, y, \lfloor\min\{d(o, x), d(o, y)\}\rfloor)).
        \end{align*}
    By the definition of the cone metric, 
    for any $x \in X$, we have $d_c(o, x) = d(o, x)$.
    Also, the cone metric is non-negative, symmetric.
\end{definition}

\begin{remark}
 If $(X, d)$ is a geodesic $(E, C)$-coarsely convex space with a geodesic $(E, C)$-coarsely convex bicombing $\Gamma$ and a base point $o \in X$, we define cone metric as follows
 \begin{align*}
d_c(x, y) \coloneqq &|d(o, x) - d(o, y)| \\
            & \: + d(\texttt{rp}\Gamma(o, x, \min\{d(o, x), d(o, y)\}), \texttt{rp}\Gamma(o, y, \min\{d(o, x), d(o, y)\}))
 \end{align*}
 for any $x, y \in X$. 
 From \cref{ex:Busemann-is-gcc},
 a Busemann space is a geodesic $(1, 0)$-coarsely convex space.
 Then, the cone metric defined above coincides with the definition given by Andreev in \cite{And18}.
 Furthermore, \cref{def:qg-cone-metric} is an extension of the cone metric defined above to quasi-geodesic spaces.
\end{remark}

\begin{lemma}\label{lem:qg-triangle-inequality}
    Let $(X, d)$ be a $(\lambda, k, E, C, \theta)$-coarsely convex space
    with $(\lambda, k, E, C, \theta)$-coarsely convex bicombing $\Gamma$.
    For any three points $x, y, z \in X$, we have 
    \[
    d_c(x, z) \leq  \lambda Ed_c(x, y) + \lambda Ed_c(y, z) + 4\lambda + 2k + C.
    \]
\end{lemma}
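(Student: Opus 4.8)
I would prove the quasi-triangle inequality for $d_c$ by reducing to the behavior of the reparametrised bicombing $\texttt{rp}\Gamma$ along the "radial" rays from $o$, and then applying the convexity axiom \cref{item:cconvex} together with the quasi-geodesic estimate. Write $r_x \coloneqq d(o,x)$, $r_y \coloneqq d(o,y)$, $r_z \coloneqq d(o,z)$, and for two points $p,q \in X$ let $m(p,q) \coloneqq \lfloor \min\{d(o,p), d(o,q)\}\rfloor$. The radial part $|r_x - r_z| \leq |r_x - r_y| + |r_y - r_z|$ is immediate, so the whole problem is to control the "angular" term
\[
d\bigl(\texttt{rp}\Gamma(o, x, m(x,z)),\ \texttt{rp}\Gamma(o, z, m(x,z))\bigr)
\]
by the corresponding terms appearing in $d_c(x,y)$ and $d_c(y,z)$, up to the stated additive constant.

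\textbf{Key steps.} First I would record the basic stability estimate: for a single point $p$ and two times $s \leq s'$ in $[0, r_p]$, the $(\lambda,k)$-quasi-geodesic property of $\texttt{rp}\Gamma(o,p,-)|_{[0,r_p]}$ gives $d(\texttt{rp}\Gamma(o,p,s), \texttt{rp}\Gamma(o,p,s')) \leq \lambda|s'-s| + k$, and since the relevant times here are floors differing from the true minima by less than $1$, any discrepancy between evaluating at $m(x,z)$ versus $m(x,y)$ or $m(y,z)$ costs at most $\lambda + k$ each. This is where the $4\lambda + 2k$ part of the constant will come from — roughly, two such floor-adjustments, one on each of the $x$- and $z$-rays. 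Second, after aligning all evaluations to a common time $\tau \leq \min\{r_x, r_y, r_z\}$ (the smallest of the floored minima, or just $\lfloor\min\{r_x,r_y,r_z\}\rfloor$), I would apply \cref{item:cconvex} in the form that appears most cleanly for rays through a common origin: with $x_1 = x_2 = o$, taking $a = \tau/r_x$, $b = \tau/r_z$ and $c$ chosen so that $ca$, $cb$ land at time $\tau$ is not quite the right shape, so instead I would invoke convexity for the pair of segments $o\to x$ and $o\to y$ at the common parameter, giving
\[
d\bigl(\texttt{rp}\Gamma(o,x,\tau), \texttt{rp}\Gamma(o,y,\tau)\bigr) \leq E\, d\bigl(\texttt{rp}\Gamma(o,x,m(x,y)), \texttt{rp}\Gamma(o,y,m(x,y))\bigr) + (\text{const}),
\]
and similarly for the pair $o\to y$, $o\to z$; the factor $E$ here is the source of the $\lambda E$ coefficients (the extra $\lambda$ absorbs the quasi-geodesic distortion when passing between the reparametrisation and the $[0,1]$-parametrisation used in \cref{def:cconvex-bicombing}). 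Third, a plain triangle inequality in $X$ at time $\tau$,
\[
d\bigl(\texttt{rp}\Gamma(o,x,\tau), \texttt{rp}\Gamma(o,z,\tau)\bigr) \leq d\bigl(\texttt{rp}\Gamma(o,x,\tau), \texttt{rp}\Gamma(o,y,\tau)\bigr) + d\bigl(\texttt{rp}\Gamma(o,y,\tau), \texttt{rp}\Gamma(o,z,\tau)\bigr),
\]
chains the two convexity estimates together, and a final floor-adjustment back from $\tau$ to $m(x,z)$ completes the bound. Collecting the constants from the two convexity applications ($C$ each, but the geometry lets one of them be absorbed) and the floor corrections should yield exactly $4\lambda + 2k + C$.

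\textbf{Main obstacle.} The delicate point is bookkeeping the parameter alignment: $m(x,y)$, $m(y,z)$, $m(x,z)$ are three different (floored) times, none of which need coincide, and one of $x,y,z$ may be much closer to $o$ than the others, so the "common time" $\tau$ may be strictly smaller than some of the $m(\cdot,\cdot)$. I need to make sure that shrinking the evaluation time only helps (which it does: \cref{item:cconvex} controls the whole initial segment, so the distance at the smaller time $\tau$ is bounded by $E$ times the distance at the larger time plus a constant, not the other way around) and that each passage between a true minimum and its floor is charged only once. Keeping the constant down to $4\lambda + 2k + C$ rather than something larger forces me to be careful that the two invocations of \cref{item:cconvex} contribute additively in a way that telescopes — I expect to use that the constant $C$ in one of them can be folded into the floor slack already counted. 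I would also double-check the edge case where one of the minima is $0$ (so a floor is $0$ and the corresponding $\texttt{rp}\Gamma$ value is just $o$), which trivializes the angular term and is consistent with the claimed inequality.
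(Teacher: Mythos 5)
Your toolbox is the right one and is essentially the paper's (the paper organizes the argument as a case analysis according to which of the three points is nearest to $o$, using exactly the three mechanisms you list: a triangle inequality at a common time, the convexity axiom \cref{item:cconvex} to push a pair of rays from $o$ down to a smaller common time at the cost of a factor $E$ plus $C$, and quasi-geodesic translation along a single ray). But there is a concrete gap at the crux. You assert that ``any discrepancy between evaluating at $m(x,z)$ versus $m(x,y)$ or $m(y,z)$ costs at most $\lambda+k$ each'' and you attribute the whole additive constant $4\lambda+2k$ to floor adjustments of size less than $1$. This is false precisely in the hard case, namely when $y$ is the point nearest to $o$: then $\tau=\lfloor d(o,y)\rfloor=m(x,y)=m(y,z)$ while $m(x,z)=\lfloor\min\{d(o,x),d(o,z)\}\rfloor$, so $m(x,z)-\tau$ is roughly $\min\{d(o,x),d(o,z)\}-d(o,y)$ and can be arbitrarily large. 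Your ``final floor-adjustment back from $\tau$ to $m(x,z)$'' is therefore not an $O(\lambda+k)$ correction but a translation along the $x$- and $z$-rays of cost $\lambda(m(x,z)-\tau)+\lambda+k$ each, and the proposal never says how this unbounded cost is paid. The missing step (this is the paper's third displayed chain of inequalities) is to charge it against the radial parts of the right-hand side: with $d(o,y)\le d(o,x)\le d(o,z)$ one checks
\[
\bigl(d(o,z)-d(o,x)\bigr)+2\lambda\bigl(d(o,x)-d(o,y)\bigr)\le \lambda E\bigl(|d(o,x)-d(o,y)|+|d(o,y)-d(o,z)|\bigr),
\]
which is exactly why the coefficient $\lambda$ in $\lambda E$ is needed. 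Relatedly, your claim that the $\lambda$ ``absorbs the quasi-geodesic distortion when passing between the reparametrisation and the $[0,1]$-parametrisation'' in the convexity step is not right: applied to two rays issuing from $o$ at a common time $t_0\le\min$ of the two radii, \cref{item:cconvex} gives a clean bound $cE\cdot d(\cdot,\cdot)+C$ with no $\lambda$; the $\lambda$ exists solely to absorb the ray-translation just described.

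Once this is repaired your plan does close, and in fact the two mechanisms decouple (which is why the constant stays small): after using the $x\leftrightarrow z$ symmetry of the statement, either $d(o,x)\le\min\{d(o,y),d(o,z)\}$, in which case $\tau=m(x,z)=m(x,y)$, no upward translation is needed, and a single application of \cref{item:cconvex} to the pair $(y,z)$ gives $d_c(x,z)\le d_c(x,y)+E\,d_c(y,z)+C$; or $d(o,y)<d(o,x)\le d(o,z)$, in which case $\tau=m(x,y)=m(y,z)$, no convexity is needed at all, and the translation-plus-absorption argument gives $d_c(x,z)\le \lambda E\,d_c(x,y)+\lambda E\,d_c(y,z)+2\lambda+2k$. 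Your parenthetical observation that only one of the two convexity constants $C$ is ever incurred is correct, since $\min\{d(o,x),d(o,y),d(o,z)\}$ is attained by one of the pairs $\{x,y\}$, $\{y,z\}$; but as written the proposal does not supply the absorption step that the nearest-point-$y$ case requires, and that step is the heart of the lemma.
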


\begin{proof}
    We consider three points $x, y, z \in X$ satisfy $d(o, x) \leq d(o, y) \leq d(o, z)$.
    Since $\Gamma$ is a $(\lambda, k, E, C, \theta)$-coarsely convex bicombing, we have
    \begin{align*}
        &d(\texttt{rp}\Gamma(o, y, \lfloor d(o, x) \rfloor), \texttt{rp}\Gamma(o, z, \lfloor d(o, x) \rfloor))\\
        &=d\left(\Gamma \left(o, y, \frac{\lfloor d(o, x) \rfloor}{d(o, y)}\right),
        \Gamma \left(o, z, \frac{\lfloor d(o, x) \rfloor}{d(o, z)}\right)\right)\\
        &\leq \frac{\lfloor d(o, x) \rfloor}{\lfloor d(o, y) \rfloor} E
        d\left(\Gamma \left(o, y, \frac{\lfloor d(o, y) \rfloor}{d(o, y)}\right), \Gamma \left(o, z, \frac{\lfloor d(o, y) \rfloor}{d(o, z)}\right)\right) + C\\
        &\leq Ed(\texttt{rp}\Gamma(o, y, \lfloor d(o, y) \rfloor), \texttt{rp}\Gamma(o, z, \lfloor d(o, y) \rfloor)) + C.
    \end{align*}

     Thus, we have the following
     \begin{align*}
         d_c(x, y) &= d(o, y) - d(o, x) + d(\texttt{rp}\Gamma(o, x, \lfloor d(o, x) \rfloor), \texttt{rp}\Gamma(o, y, \lfloor d(o, x) \rfloor))\\
         &\leq d(o, z) - d(o, x) + d(\texttt{rp}\Gamma(o, x, \lfloor d(o, x) \rfloor), \texttt{rp}\Gamma(o, z, \lfloor d(o, x) \rfloor))\\
         &\phantom{ \leq \quad} + d(\texttt{rp}\Gamma(o, y, \lfloor d(o, x) \rfloor), \texttt{rp}\Gamma(o, z, \lfloor d(o, x) \rfloor))\\
         &= d_c(x, z) + d(\texttt{rp}\Gamma(o, y, \lfloor d(o, x) \rfloor), \texttt{rp}\Gamma(o, z, \lfloor d(o, x) \rfloor))\\
         &\leq d_c(x, z) + d(o, z) - d(o, y) + Ed(\texttt{rp}\Gamma(o, y, \lfloor d(o, y) \rfloor), \texttt{rp}\Gamma(o, z, \lfloor d(o, y) \rfloor)) + C\\
         &\leq Ed_c(x, z) + Ed_c(y, z) + C.
     \end{align*}

     Similarly, we have
     \begin{align*}
         d_c(x, z) &= d(o, z) - d(o, x) + d(\texttt{rp}\Gamma(o, x, \lfloor d(o, x) \rfloor), \texttt{rp}\Gamma(o, z, \lfloor d(o, x) \rfloor))\\
         &\leq d(o, z) - d(o, x) + d(o, y) - d(o, y) + d(\texttt{rp}\Gamma(o, x, \lfloor d(o, x) \rfloor), \texttt{rp}\Gamma(o, y, \lfloor d(o, x) \rfloor))\\
         &\phantom{ \leq \quad} + d(\texttt{rp}\Gamma(o, y, \lfloor d(o, x) \rfloor), \texttt{rp}\Gamma(o, z, \lfloor d(o, x) \rfloor))\\
         &= d_c(x, y) + d(o, z) - d(o, y) + d(\texttt{rp}\Gamma(o, y, \lfloor d(o, x) \rfloor), \texttt{rp}\Gamma(o, z, \lfloor d(o, x) \rfloor))\\
         &\leq d_c(x, y) + d(o, z) - d(o, y) + Ed(\texttt{rp}\Gamma(o, y, \lfloor d(o, y) \rfloor), \texttt{rp}\Gamma(o, z, \lfloor d(o, y) \rfloor)) + C\\
         &\leq Ed_c(x, y) + Ed_c(y, z) + C.
     \end{align*}

     Since $\Gamma$ is a $(\lambda, k)$-quasi geodesic bicombing, we have
     \begin{align*}
     d(\texttt{rp}\Gamma(o, y, \lfloor d(o, x) \rfloor), \texttt{rp}\Gamma(o, y, \lfloor d(o, y) \rfloor))
     &\leq \lambda \left(\frac{\lfloor d(o, y) \rfloor}{d(o, y)} - \frac{\lfloor d(o, x) \rfloor}{d(o, y)}\right)d(o, y) + k\\
     &\leq \lambda(d(o, y) - d(o, x)) + 2\lambda + k.
     \end{align*}
     Similarly, we have
      \begin{align*}
     d(\texttt{rp}\Gamma(o, z, \lfloor d(o, x) \rfloor), \texttt{rp}\Gamma(o, z, \lfloor d(o, y) \rfloor))
     &\leq \lambda \left(\frac{\lfloor d(o, y) \rfloor}{d(o, z)} - \frac{\lfloor d(o, x) \rfloor}{d(o, z)}\right)d(o, z) + k\\
     &\leq \lambda(d(o, y) - d(o, x)) + 2\lambda + k.
     \end{align*}

     So we have
     \begin{align*}
         d_c(y, z) &= d(o, z) - d(o, y) + d(\texttt{rp}\Gamma(o, y, \lfloor d(o, y) \rfloor), \texttt{rp}\Gamma(o, z, \lfloor d(o, y) \rfloor))\\
         &\leq d(o, z) - d(o, y) + d(\texttt{rp}\Gamma(o, y, \lfloor d(o, x) \rfloor), \texttt{rp}\Gamma(o, y, \lfloor d(o, y) \rfloor))\\
         &\phantom{\leq \quad}
          + d(\texttt{rp}\Gamma(o, y, \lfloor d(o, x) \rfloor), \texttt{rp}\Gamma(o, x, \lfloor d(o, x) \rfloor))\\
         &\phantom{\leq \quad}
          + d(\texttt{rp}\Gamma(o, x, \lfloor d(o, x) \rfloor), \texttt{rp}\Gamma(o, z, \lfloor d(o, x) \rfloor))\\
         &\phantom{\leq \quad}
         + d(\texttt{rp}\Gamma(o, z, \lfloor d(o, x) \rfloor), \texttt{rp}\Gamma(o, z, \lfloor d(o, y) \rfloor))\\
         &\leq d(o, z) - d(o, y) + \lambda(d(o, y) - d(o, x)) + 2\lambda + k\\
         &\phantom{\leq \quad}
         + d(\texttt{rp}\Gamma(o, y, \lfloor d(o, x) \rfloor), \texttt{rp}\Gamma(o, x, \lfloor d(o, x) \rfloor))\\
         &\phantom{\leq \quad}
          + d(\texttt{rp}\Gamma(o, x, \lfloor d(o, x) \rfloor), \texttt{rp}\Gamma(o, z, \lfloor d(o, x) \rfloor))
          + \lambda(d(o, y) - d(o, x)) + 2\lambda + k\\
        &\leq \lambda(d(o, z) - d(o, y)) + \lambda(d(o, y) - d(o, x)) + 2\lambda + k\\
         &\phantom{\leq \quad}
         + d(\texttt{rp}\Gamma(o, y, \lfloor d(o, x) \rfloor), \texttt{rp}\Gamma(o, x, \lfloor d(o, x) \rfloor))\\
         &\phantom{\leq \quad}
          + d(\texttt{rp}\Gamma(o, x, \lfloor d(o, x) \rfloor), \texttt{rp}\Gamma(o, z, \lfloor d(o, x) \rfloor))
          + \lambda(d(o, y) - d(o, x)) + 2\lambda + k\\
        &\leq \lambda d_c(x, y) + \lambda d_c(x, z) + 4\lambda + 2k.
     \end{align*}
\end{proof}

\begin{lemma}\label{lem:qg-bi-Lip}
    Let $(X, d)$ be a $(\lambda, k, E, C, \theta)$-coarsely convex space and let $d_c$ be a cone metric on $X$.
    For any $x, y \in X$, we have
    \[
    d_c(x, y) \leq (\lambda + 2)d(x, y) + 2\lambda + 2k.
    \]
\end{lemma}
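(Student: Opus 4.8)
The plan is to bound the cone metric $d_c(x,y)$ directly from its definition, splitting on the two cases $d(o,x)=d(o,y)$ (so that both reparametrisation parameters are $\lfloor d(o,x)\rfloor = \lfloor d(o,y)\rfloor$) and using the triangle inequality in $X$ together with the quasi-geodesic estimate for $\Gamma$ to control the bicombing term. Write $a := d(o,x)$, $b := d(o,y)$ and assume without loss of generality $a \leq b$, so that $\min\{a,b\} = a$ and the relevant parameter is $m := \lfloor a \rfloor$. Then
\[
d_c(x,y) = (b - a) + d\bigl(\texttt{rp}\Gamma(o,x,m),\,\texttt{rp}\Gamma(o,y,m)\bigr),
\]
and the first summand is $|d(o,x)-d(o,y)| \leq d(x,y)$ by the triangle inequality.

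For the second summand I would first move $\texttt{rp}\Gamma(o,x,m)$ back to the endpoint $x$: since $m \leq a = d(o,x)$, the point $\texttt{rp}\Gamma(o,x,m) = \Gamma(o,x,m/a)$ lies on the $(\lambda,k)$-quasi-geodesic from $o$ to $x$, and the quasi-geodesic bicombing inequality gives
\[
d\bigl(\texttt{rp}\Gamma(o,x,m),\,x\bigr) = d\bigl(\texttt{rp}\Gamma(o,x,m),\,\texttt{rp}\Gamma(o,x,a)\bigr) \leq \lambda\Bigl(\tfrac{a}{a} - \tfrac{m}{a}\Bigr)a + k = \lambda(a - m) + k \leq \lambda + k,
\]
using $a - m = a - \lfloor a\rfloor < 1$. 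The same argument applied to $y$, but stopping the parameter at $m$ rather than $b$, requires a bit more care: $\texttt{rp}\Gamma(o,y,m) = \Gamma(o,y,m/b)$ and I compare it with $\texttt{rp}\Gamma(o,y,a) = \Gamma(o,y,a/b)$, getting $d\bigl(\texttt{rp}\Gamma(o,y,m),\,\texttt{rp}\Gamma(o,y,a)\bigr) \leq \lambda\bigl(\tfrac{a}{b} - \tfrac{m}{b}\bigr)b + k = \lambda(a-m) + k \leq \lambda + k$, and then $d\bigl(\texttt{rp}\Gamma(o,y,a),\,y\bigr) = d\bigl(\texttt{rp}\Gamma(o,y,a),\,\texttt{rp}\Gamma(o,y,b)\bigr) \leq \lambda(b-a) + k$. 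Combining these three quasi-geodesic estimates with the triangle inequality,
\[
d\bigl(\texttt{rp}\Gamma(o,x,m),\,\texttt{rp}\Gamma(o,y,m)\bigr) \leq d(x,y) + (\lambda + k) + (\lambda + k) + \bigl(\lambda(b-a) + k\bigr) + (\lambda + k),
\]
and then bounding $b - a = |d(o,x)-d(o,y)| \leq d(x,y)$ once more yields
\[
d_c(x,y) \leq d(x,y) + d(x,y) + \lambda d(x,y) + 3\lambda + 4k + (\text{a constant}),
\]
which after collecting terms is of the claimed form $(\lambda+2)d(x,y) + 2\lambda + 2k$ up to adjusting which crude bounds I keep; I would tune the intermediate inequalities (e.g.\ using $\lambda(a-m)\le\lambda$ but absorbing the $+k$ terms carefully, and noting that when $m=0$ the bicombing point is just $o$) to land exactly on the stated constants.

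The main obstacle I anticipate is purely bookkeeping rather than conceptual: keeping the floor-function discrepancies $a - \lfloor a\rfloor$ and the "$o$ to the quasi-geodesic point" detours under control while not accumulating more than $2\lambda + 2k$ in additive constant and not more than $\lambda + 2$ in the multiplicative constant. In particular one must be slightly careful that the second reparametrisation parameter on the $y$-side is $m = \lfloor\min\{a,b\}\rfloor$, not $\lfloor b\rfloor$, so the estimate passes through the intermediate point $\texttt{rp}\Gamma(o,y,a)$; and one should handle the degenerate case $\min\{a,b\} < 1$, where $m = 0$ and $\texttt{rp}\Gamma(o,\cdot,0) = o$, so that $d_c(x,y) = |a-b| + d(o,o) = |a-b| \leq d(x,y)$ trivially. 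Modulo that routine case analysis, the estimate follows from the triangle inequality and the upper quasi-geodesic bound alone.
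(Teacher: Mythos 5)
Your approach is essentially the paper's: reduce to $d(o,x)\le d(o,y)$, bound $|d(o,x)-d(o,y)|\le d(x,y)$, and control the bicombing term by the triangle inequality through $x$ and $y$ together with the upper quasi-geodesic bound. The one real issue is that your route, as written, does not land on the stated constants. With $a=d(o,x)$, $b=d(o,y)$, $m=\lfloor a\rfloor$, your displayed combination lists four quasi-geodesic error terms although your own path $\texttt{rp}\Gamma(o,x,m)\to x\to y\to\texttt{rp}\Gamma(o,y,a)\to\texttt{rp}\Gamma(o,y,m)$ produces only three, so one $(\lambda+k)$ is spurious; and even after removing it, the detour through the intermediate point $\texttt{rp}\Gamma(o,y,a)$ costs an extra $+k$, giving $d(\texttt{rp}\Gamma(o,x,m),\texttt{rp}\Gamma(o,y,m))\le d(x,y)+\lambda(b-a)+2\lambda+3k$ and hence only $d_c(x,y)\le(\lambda+2)d(x,y)+2\lambda+3k$, weaker than the claim. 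The paper handles the $y$-side in a single step: $d(y,\texttt{rp}\Gamma(o,y,m))=d(\Gamma(o,y,1),\Gamma(o,y,m/b))\le\lambda(b-m)+k\le\lambda(b-a+1)+k$, so the bicombing term is at most $(\lambda+1)d(x,y)+2\lambda+2k$ and the stated bound follows exactly. The ``tuning'' you defer is therefore precisely this: drop the intermediate point and compare $\texttt{rp}\Gamma(o,y,m)$ directly with $y$; with that change (and your correct treatment of the degenerate case $m=0$) the argument is complete and coincides with the paper's.
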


\begin{proof}
    We do not lose generality by assuming $d(o, x) \leq d(o, y)$.
    Since $\Gamma$ is a $(\lambda, k)$-quasi geodesic bicombing, we have
    \begin{align*}
        &d(\texttt{rp}\Gamma(o, x, \lfloor d(o, x) \rfloor), \texttt{rp}\Gamma(o, y, \lfloor d(o, x) \rfloor))\\
        &\leq d(\texttt{rp}\Gamma(o, x, \lfloor d(o, x) \rfloor), x)
        + d(x, y) + d(y, \texttt{rp}\Gamma(o, y, \lfloor d(o, x) \rfloor))\\
        &= d\left(\Gamma\left(o, x, \frac{\lfloor d(o, x) \rfloor}{d(o, x)}\right), \Gamma(o, x, 1)\right) + d(x, y) + d\left(\Gamma(o, y, 1), \Gamma\left(o, y, \frac{\lfloor d(o, x) \rfloor}{d(o, y)}\right)\right)\\
        &\leq \lambda + k + d(x, y) + \lambda(d(o, y) - d(o, x) + 1) + k\\
        &\leq (\lambda + 1)d(x, y) + 2\lambda + 2k.
    \end{align*}
    So we have
    \begin{align*}
    d_c(x, y) &= d(o, y) - d(o, x) + d(\texttt{rp}\Gamma(o, x, \lfloor d(o, x) \rfloor), \texttt{rp}\Gamma(o, y, \lfloor d(o, x) \rfloor))\\
    &\leq (\lambda + 2)d(x, y) + 2\lambda + 2k.
    \end{align*}
\end{proof}

\subsection{Horoboundary}
Now, we consider horoboundary for the cone metric, and 
study the relation with the ideal boundary.
Let $B(X)$ be the set of all $\R$-valued functions on $X$, and equip with the topology
of uniform convergence on compact sets.
We define $\psi \colon X \rightarrow B(X)$ as follows. For $x \in X$, let
\begin{align*}
    x \mapsto \psi_x &\coloneqq d_c(-, x) - d_c(o, x)\\
                     &\coloneqq d_c(-, x) - d(o, x).
\end{align*}

\begin{definition}\label{def:qg-cone-metric-horoboundary}
  Let $(X,d)$ be a proper $(\lambda, k, E, C, \theta)$-coarsely convex space and let 
  $B_b(X)$ be the set of all $\R$-valued bounded functions on $X$. 
  Let $\psi \colon X \rightarrow B(X)$ be the map defined above. 
  Let
  \[
  B_{b, \lambda, E}(X) \coloneqq 
  \left\{
f \in B(X) 
  \relmiddle|
     \frac{1}{\lambda E}\psi_x - \beta \leq f \leq \lambda E\psi_x + \beta
     , \: \exists x \in X, \exists\beta \in B_b(X)
     \right\}.
     \]
  We define the \emph{horoboundary of $X$ with cone metric}, denoted by $\partial^c_h X$ as follows
     \[
     \partial^c_h X \coloneqq \cl\psi(X) \setminus B_{b,\lambda, E}(X).
     \]
 We also say that $\partial^c_h X$ is a horoboundary of $(X,d_c)$.
 Here $\cl\psi(X)$ denotes the closure of $\psi(X)$ in $B(X)$.

  We call a function which belongs to $\partial^c_h X$ a 
  \emph{horofunction with cone metric}, or we simply call it \emph{horofunction} if there is no risk of misunderstanding.
 \end{definition}

 \begin{lemma}
 \label{lem:qg-seq-diverges}
    Let $(X,d)$ be a proper $(\lambda, k, E, C, \theta)$-coarsely convex space. 
    If the sequence $\psi_{x_n}$ converges to $\xi \in \partial^c_h X$, 
    then only finitely many of the points $x_n$ lie in any bounded subset of $X$. 
\end{lemma}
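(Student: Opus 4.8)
The plan is to argue by contradiction, exploiting the fact that the \emph{exceptional set} $B_{b,\lambda,E}(X)$ in \cref{def:qg-cone-metric-horoboundary} is designed precisely to absorb points that stay in a bounded region. Suppose some bounded subset of $X$ — hence some closed ball $\bar{B}(o,R)$ — contains infinitely many of the $x_n$. Passing to a subsequence (which leaves the limit $\xi$ unchanged), we may assume $x_n \in \bar{B}(o,R)$ for every $n$. The goal is to show that then all the $\psi_{x_m}$ are comparable, up to a fixed additive constant and the multiplicative factor $\lambda E$, to a single function $\psi_{x_n}$; passing to the limit this forces $\xi \in B_{b,\lambda,E}(X)$, contradicting $\xi \in \partial^c_h X = \cl\psi(X)\setminus B_{b,\lambda,E}(X)$.

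To make this precise, first apply \cref{lem:qg-bi-Lip}: since $d(x_n,x_m)\le 2R$ for all $n,m$, we get $d_c(x_n,x_m)\le S$ where $S:=2(\lambda+2)R+2\lambda+2k$. Next, for an arbitrary $u\in X$, the quasi-triangle inequality \cref{lem:qg-triangle-inequality} gives
\[
 d_c(u,x_m)\le \lambda E\, d_c(u,x_n)+\lambda E\, d_c(x_n,x_m)+(4\lambda+2k+C)\le \lambda E\, d_c(u,x_n)+T,
\]
with $T:=\lambda E\,S+4\lambda+2k+C$, and the same with the roles of $m$ and $n$ exchanged. Substituting $d_c(u,x)=\psi_x(u)+d(o,x)$ and using $0\le d(o,x_n),d(o,x_m)\le R$, one extracts a constant $\beta_0$ (depending only on $\lambda,k,E,C,R$) such that
\[
 \tfrac{1}{\lambda E}\,\psi_{x_n}(u)-\beta_0 \;\le\; \psi_{x_m}(u)\;\le\;\lambda E\,\psi_{x_n}(u)+\beta_0
\]
for all $u\in X$ and all $n,m$.

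Finally, fix $n$ and let $m\to\infty$: since $\psi_{x_m}\to\xi$ uniformly on compact sets, in particular pointwise, the two displayed inequalities pass to the limit and yield $\tfrac{1}{\lambda E}\psi_{x_n}-\beta_0\le\xi\le\lambda E\,\psi_{x_n}+\beta_0$ on all of $X$. Since the constant function $\beta_0$ lies in $B_b(X)$, this says exactly that $\xi\in B_{b,\lambda,E}(X)$, contradicting $\xi\in\partial^c_h X$. The only mildly delicate point is the bookkeeping of constants so that a single $\beta_0\in B_b(X)$ serves simultaneously for the upper and the lower bound; there is no genuine obstacle here. (Note that properness of $X$ is not actually needed for this lemma — unlike the corresponding statement for the metric horoboundary, where continuity of $\phi$ and extraction of a limit point are used — since $B_{b,\lambda,E}(X)$ already encodes the coarse slack.)
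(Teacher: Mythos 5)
Your argument is correct: the bound $d_c(x_n,x_m)\le (\lambda+2)\cdot 2R+2\lambda+2k$ from \cref{lem:qg-bi-Lip}, fed into \cref{lem:qg-triangle-inequality} and rewritten in terms of $\psi$, does give a single constant $\beta_0$ (one can take $\beta_0=\lambda E R+\lambda E\,S+4\lambda+2k+C$, using $\lambda E\ge 1$ for the lower bound) with $\tfrac{1}{\lambda E}\psi_{x_n}-\beta_0\le\psi_{x_m}\le\lambda E\,\psi_{x_n}+\beta_0$, and letting $m\to\infty$ pointwise places $\xi$ in $B_{b,\lambda,E}(X)$, the desired contradiction. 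The route differs from the paper's in one respect: the paper invokes properness to extract a subsequence $x_{n(k)}\to\tilde{x}\in\bar{B}(o,R)$ and then compares $\xi$ with $\psi_{\tilde{x}}$, using $d(x_{n(k)},\tilde{x})\to 0$ inside the same two lemmas, whereas you anchor the comparison at a fixed member $x_n$ of the bounded subsequence and only pass to the limit in the index $m$. Both proofs rest on \cref{lem:qg-triangle-inequality} and \cref{lem:qg-bi-Lip} and end with the same contradiction against $\xi\notin B_{b,\lambda,E}(X)$, but your version needs no compactness, so your parenthetical remark is accurate: for this particular lemma the properness hypothesis is not used (it is genuinely needed elsewhere, e.g.\ in \cref{lem:qg-const-geodesic} and in \cref{lem:combing-at-infinity}), and your constant is slightly less sharp than the paper's (which exploits $d(x_n,\tilde{x})\to 0$), though that is irrelevant for the conclusion.
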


\begin{proof}
    We assume that infinitely many 
    $x_n$ contained in some ball $\bar{B}(o, R)$ where $R > 0$. 

    Since $X$ is proper, $\bar{B}(o,R)$ is compact.  
    By taking a subsequence, we can assume that 
    $\{x_n\}_{n \in \mathbb{N}}$ converges to $\tilde{x} \in \bar{B}(o, R)$ as $n \rightarrow \infty$.
    By hypothesis, $\psi_{x_n}$ 
    converges pointwise to $\xi$ on $X$.
    By \cref{lem:qg-triangle-inequality} and \cref{lem:qg-bi-Lip}, we have 
    \begin{align*}
        \psi_{x_n}(x) - \lambda E\psi_{\tilde{x}}(x) 
        &= \{d_c(x, x_n) - d_c(o, x_n)\} - \lambda E\{d_c(x, \tilde{x}) - d_c(o, \tilde{x})\}\\
        &= \{d_c(x, x_n) - \lambda Ed_c(x, \tilde{x})\} + \{\lambda Ed_c(o, \tilde{x}) - d_c(o, x_n)\}\\
        &\leq \lambda Ed_c(x_n, \tilde{x}) +4\lambda + 2k + C + \{\lambda Ed(o, \tilde{x}) - d(o, x_n)\}\\
       &\leq \lambda E\{(\lambda + 2)d(x_n, \tilde{x}) + 2\lambda + 2k\}
       + 4\lambda + 2k + C \\
       &\phantom{\leq \quad}
       + \{\lambda Ed(o, \tilde{x}) - d(o, x_n)\}
    \end{align*}
    for all $x \in X$.
    Since $d(x_n, \tilde{x}) \rightarrow 0$ and 
    $\psi_{x_n}(x) \rightarrow \xi(x)$
    for all $x \in X$,  we get
    \begin{align*}
    \xi(x) - \lambda E\psi_{\tilde{x}}(x) 
    &= \lim_{n \rightarrow \infty}\psi_{x_n}(x) - \lambda E\psi_{\tilde{x}}(x)\\
    &\leq \lim_{n \rightarrow \infty}\lambda E\{(\lambda + 2)d(x_n, \tilde{x}) + 2\lambda + 2k\}
       + 4\lambda + 2k + C \\
     &\phantom{\leq \quad \quad \quad}
       + \{\lambda Ed(o, \tilde{x}) - d(o, x_n)\}\\
    &= (\lambda E - 1)d(o, \tilde{x}) + 2\lambda^2 E + 2\lambda kE + 4\lambda + 2k + C\\
    &\leq (\lambda E - 1)R + 2\lambda^2 E + 2\lambda kE + 4\lambda + 2k + C.
    \end{align*}
    So we have 
    \[
    \xi(x) \leq \lambda E\psi_{\tilde{x}}(x) + (\lambda E - 1)R + 2\lambda^2 E + 2\lambda kE + 4\lambda + 2k + C
    \]
    for any $x \in X$.

    Similarly, we have
    \[
    \psi_{\tilde{x}}(x) - \lambda E\xi(x) \leq (\lambda E - 1)R + 2\lambda^2 E + 2\lambda kE + 4\lambda + 2k + C
    \]
    for any $x \in X$, that is,
    \[
    \frac{1}{\lambda E}\psi_{\tilde{x}}(x) - \frac{1}{\lambda E}\{(\lambda E - 1)R + 2\lambda^2 E + 2\lambda kE + 4\lambda + 2k + C\} \leq \xi(x)
    \]
    for any $x \in X$.
    This contradicts to $\xi \in \partial^c_h X$.
\end{proof}

In the rest of this section, let $(X, d)$ be a proper metric space
with a $(\lambda, k, E, C, \theta)$-coarsely convex bicombing $\Gamma$,
fix the base point $o \in X$ 
and let $\partial_o X$ be the ideal boundary of $X$ with respect to the base point $o$. 
We construct a continuous map from $\partial^c_h X$ to $\partial_o X$.

\begin{lemma}\label{lem:qg-const-geodesic}
 Let $\xi \in \partial^c_h X$ and let $\{x_n\}_{n \in \mathbb{N}}$ on 
 $X$ be a sequence such that  
    $\psi_{x_n}$ converges uniformly to $\xi$ on compact sets. 
\begin{enumerate}[label=(\arabic*)]
 \item \label{item:qg-Gamma-x_n-R-converge}
       The sequence $\{x_n\}_{n \in \mathbb{N}}$ belongs to $S^{\infty}_o X$.
       That is, 
       $(x_n \:|\: x_m)_o \rightarrow \infty$
       as $n, m \rightarrow \infty$.
 \item \label{item:qg-Gamma-x_n-R-converge-independet}
       Let $\{y_n\}_{n \in \mathbb{N}}$ be another sequence on $X$ such that 
       $\psi_{y_n}$ converges uniformly to $\xi$ on compact sets. Then we have 
       $(x_n \:|\: y_n)_o \rightarrow \infty$
       as $n \rightarrow \infty$.
\end{enumerate}
\end{lemma}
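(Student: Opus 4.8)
The plan is to establish part \ref{item:qg-Gamma-x_n-R-converge-independet} first and then deduce part \ref{item:qg-Gamma-x_n-R-converge} from it. Indeed, if $(x_n \mid x_m)_o$ did not tend to $\infty$ as $n,m\to\infty$, one could extract indices $n_i,m_i\to\infty$ with $(x_{n_i}\mid x_{m_i})_o$ bounded; since $\{x_{n_i}\}$ and $\{x_{m_i}\}$ are subsequences of $\{x_n\}$, both of their $\psi$-images still converge uniformly on compact sets to $\xi$, so part \ref{item:qg-Gamma-x_n-R-converge-independet} applied to the pair $\{x_{n_i}\},\{x_{m_i}\}$ would force $(x_{n_i}\mid x_{m_i})_o\to\infty$, a contradiction. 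Hence it suffices to prove part \ref{item:qg-Gamma-x_n-R-converge-independet}.

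For part \ref{item:qg-Gamma-x_n-R-converge-independet} I argue by contradiction: after passing to subsequences, assume $(x_n\mid y_n)_o\le T$ for all $n$ and a fixed $T\ge 0$. By \cref{lem:qg-seq-diverges}, $d(o,x_n)\to\infty$ and $d(o,y_n)\to\infty$, so for $n$ large the minimum defining $(x_n\mid y_n)_o$ is attained by its supremum term; thus $\sup\{t : d(\texttt{rp}\Gamma(o,x_n,t),\texttt{rp}\Gamma(o,y_n,t))\le D_1\}\le T$, which gives $d(\texttt{rp}\Gamma(o,x_n,T+1),\texttt{rp}\Gamma(o,y_n,T+1))>D_1$. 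The geometric heart of the argument is an $n$-uniform linear divergence estimate for the two rays: applying condition \ref{item:cconvex} of \cref{def:cconvex-bicombing} with $x_1=x_2=o$, $y_1=x_n$, $y_2=y_n$, $a=s/d(o,x_n)$, $b=s/d(o,y_n)$ and $c=(T+1)/s$ yields, for $T+1\le s\le\min\{d(o,x_n),d(o,y_n)\}$,
\[
d(\texttt{rp}\Gamma(o,x_n,T+1),\texttt{rp}\Gamma(o,y_n,T+1)) \le \tfrac{T+1}{s}E\, d(\texttt{rp}\Gamma(o,x_n,s),\texttt{rp}\Gamma(o,y_n,s)) + C,
\]
and since $D_1=2D+2>C$ this rearranges to $d(\texttt{rp}\Gamma(o,x_n,s),\texttt{rp}\Gamma(o,y_n,s))\ge\tfrac{s}{(T+1)E}(D_1-C)$, a bound that grows linearly in $s$ with a slope independent of $n$.

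Now fix a large $\tau$ and put $p_n:=\texttt{rp}\Gamma(o,x_n,\tau)$, so $d(o,p_n)\le\lambda\tau+k$ and $p_n\in\bar{B}(o,\lambda\tau+k)$, a compact set. Uniform convergence of $\psi_{x_n}$ and $\psi_{y_n}$ to $\xi$ on this ball gives $|\psi_{x_n}(p_n)-\psi_{y_n}(p_n)|\to 0$. On the other hand, for $n$ large $d(o,p_n)<\min\{d(o,x_n),d(o,y_n)\}$, so writing $s_0:=\lfloor d(o,p_n)\rfloor$ and unwinding \cref{def:qg-cone-metric}, the term $|d(o,p_n)-d(o,x_n)|$ cancels against $-d(o,x_n)$ in $\psi_{x_n}(p_n)=d_c(p_n,x_n)-d(o,x_n)$ (and similarly for $y_n$), leaving
\[
\psi_{y_n}(p_n)-\psi_{x_n}(p_n) = d(\texttt{rp}\Gamma(o,p_n,s_0),\texttt{rp}\Gamma(o,y_n,s_0)) - d(\texttt{rp}\Gamma(o,p_n,s_0),\texttt{rp}\Gamma(o,x_n,s_0)).
\]
Since $\texttt{rp}\Gamma(o,x_n,\tau)=p_n=\texttt{rp}\Gamma(o,p_n,d(o,p_n))$, \cref{lem:gromov-product}(5) bounds the subtracted distance by $\Omega$; combining the triangle inequality with the divergence estimate at $s=s_0\ge\tau/\lambda-k-1$ gives
\[
\psi_{y_n}(p_n)-\psi_{x_n}(p_n) \ge \frac{\tau/\lambda-k-1}{(T+1)E}(D_1-C) - 2\Omega.
\]
Here $T,\lambda,k,E,D_1,C,\Omega$ are fixed, so for $\tau$ large enough the right-hand side exceeds $1$ for every large $n$, contradicting $|\psi_{x_n}(p_n)-\psi_{y_n}(p_n)|\to 0$. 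This proves part \ref{item:qg-Gamma-x_n-R-converge-independet}, hence also part \ref{item:qg-Gamma-x_n-R-converge}.

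The main obstacle is the $n$-uniform linear divergence estimate: the single separation bound $d(\texttt{rp}\Gamma(o,x_n,T+1),\texttt{rp}\Gamma(o,y_n,T+1))>D_1$ must be boosted, via coarse convexity, to a lower bound on $d(\texttt{rp}\Gamma(o,x_n,s),\texttt{rp}\Gamma(o,y_n,s))$ that grows without bound in $s$ and is uniform in $n$. Once this is in hand, the cancellation built into the cone metric does the rest, and the only remaining care is to absorb the additive errors (the constant $\Omega$, the floor function, and the quasi-geodesic constants $\lambda,k$) into the linearly growing main term by choosing $\tau$ sufficiently large.
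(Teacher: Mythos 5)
Your proof is correct, but it is organized differently from the paper's. The paper argues directly: for every scale $R$ it uses the test point $\tilde{x}_n=\texttt{rp}\Gamma(o,x_n,R)$, the cancellation built into the cone metric, and an explicit $\theta(0)$-estimate to show that the rays toward $x_n$ and $x_m$ (resp.\ $y_n$) are eventually within a uniform constant $L$ of each other at time $R$; it then rescales via condition \ref{item:cconvex} of \cref{def:cconvex-bicombing} (closeness at time $ELs$ forces distance at most $1+C\le D_1$ at time $s$) to conclude $(x_n\mid x_m)_o\ge s$, and it repeats the argument verbatim for part (2). You instead run the contrapositive: boundedness of the Gromov product along a subsequence, fed into the same convexity inequality read in the reverse direction, yields an $n$-uniform linear lower bound on the divergence of the two rays, which contradicts uniform convergence of $\psi_{x_n}$ and $\psi_{y_n}$ at a single well-chosen test point $p_n=\texttt{rp}\Gamma(o,x_n,\tau)$; you also replace the paper's explicit bound $\lambda(\theta(0)+2)+3k$ on $d(\texttt{rp}\Gamma(o,p_n,\lfloor d(o,p_n)\rfloor),\texttt{rp}\Gamma(o,x_n,\lfloor d(o,p_n)\rfloor))$ by an appeal to item (5) of \cref{lem:gromov-product} (the constant $\Omega$), and you deduce part (1) from part (2) by a subsequence argument instead of proving it separately (this is not circular, since your proof of (2) uses only \cref{lem:qg-seq-diverges} and uniform convergence). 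The ingredients are the same — the test point on the ray, the exact cancellation in $d_c$, and convexity as the scale-transfer mechanism — but your version trades the paper's quantitative ``closeness at all scales'' statement for a shorter proof by contradiction, at the cost of being non-effective, while the paper's direct argument is what gets reused later (e.g.\ in \cref{prop:qg-Pr-is-continuous} and \cref{prop:qg-g-is-well-defined}, which cite intermediate estimates from this proof rather than just its statement).
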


\begin{proof}
    We take $R > 0$ arbitrarily. Set $ \tilde{x}_n = \texttt{rp}\Gamma(o, x_n, R)$.
    By the definition of $\Gamma$, for any $n \in \mathbb{N}$, we have
 \[
 d(o, \tilde{x}_n) = d\left(o,\texttt{rp}\Gamma(o, x_n, R) 
 \right)
 \leq \lambda R + k .
 \]
 So we have $\tilde{x}_n \in \bar{B}(o, \lambda R + k)$ for any $n \in \mathbb{N}$.
 Since $X$ is proper, $\bar{B}(o, \lambda R + k)$ is compact.

 Since $\psi_{x_n}$ converges uniformly to $\xi$ on $\bar{B}(o, \lambda R + k)$, we have
 \begin{align*}
        |\psi_{x_n}(\tilde{x}_n) - \psi_{x_m}(\tilde{x}_n)| &\leq |\psi_{x_n}(\tilde{x}_n) - \xi(\tilde{x}_n)| + |\psi_{x_m}(\tilde{x}_n) - \xi(\tilde{x}_n)|\\
        &\leq \sup_{u \in \bar{B}(o, \lambda R + k)}|\psi_{x_n}(u) - \xi(u)| + \sup_{u \in \bar{B}(o, \lambda R + k)}|\psi_{x_m}(u) - \xi(u)|\\
        &\rightarrow 0 \text{ as } n, m \rightarrow \infty.
    \end{align*}
On the other hand, for any $n, m \in \mathbb{N}$ with 
$d(o, x_n), d(o, x_m) > R$, we have
\begin{align}\label{eq:qg-psi-metric}
     &|\psi_{x_n}(\tilde{x}_n) - \psi_{x_m}(\tilde{x}_n)|\\
        &= |\{d_c(\tilde{x}_n, x_n) - d(o, x_n)\} - \{d_c(\tilde{x}_n, x_m) - d(o, x_m)\}|\notag \\
        &= |\{d(o, x_n) - d(o, \tilde{x}_n) + d(\texttt{rp}\Gamma(o, \tilde{x}_n, \lfloor d(o, \tilde{x}_n) \rfloor), \texttt{rp}\Gamma(o, x_n, \lfloor d(o, \tilde{x}_n) \rfloor)) - d(o, x_n)\}\notag \\
        &\phantom{ {=} \quad}
        - \{d(o, x_m) - d(o, \tilde{x}_n) + d(\texttt{rp}\Gamma(o, \tilde{x}_n, \lfloor d(o, \tilde{x}_n) \rfloor), \texttt{rp}\Gamma(o, x_m, \lfloor d(o, \tilde{x}_n) \rfloor)) - d(o, x_m)\}|\notag \\
        &= |d(\texttt{rp}\Gamma(o, \tilde{x}_n, \lfloor d(o, \tilde{x}_n) \rfloor), \texttt{rp}\Gamma(o, x_n, \lfloor d(o, \tilde{x}_n) \rfloor))\notag \\
        &\phantom{ {=} \quad}
        - 
        d(\texttt{rp}\Gamma(o, \tilde{x}_n, \lfloor d(o, \tilde{x}_n) \rfloor), \texttt{rp}\Gamma(o, x_m, \lfloor d(o, \tilde{x}_n) \rfloor))|\notag.
\end{align}
Since $\Gamma$ is a $(\lambda, k, E, C, \theta)$-coarsely convex bicombing, we have
\begin{align*}
    |R - d(o, \tilde{x}_n)|
    &=
    \left|\frac{R}{d(o, x_n)}d(o, x_n) - d(o, \tilde{x}_n) 
    \right|\\
    &\leq \theta \left( d \left(\Gamma \left(o, x_n, \frac{R}{d(o, x_n)}\right), \Gamma(o, \tilde{x}_n, 1)\right) \right)\\
    &= \theta(0).
\end{align*}
So we have
\begin{align}\label{eq:qg-close-of-qg}
    &d(\texttt{rp}\Gamma(o, \tilde{x}_n, \lfloor d(o, \tilde{x}_n) \rfloor), \texttt{rp}\Gamma(o, x_n, \lfloor d(o, \tilde{x}_n) \rfloor))\\
    &\leq d(\texttt{rp}\Gamma(o, \tilde{x}_n, \lfloor d(o, \tilde{x}_n) \rfloor), \texttt{rp}\Gamma(o, \tilde{x}_n, d(o, \tilde{x}_n)))
    + d(\texttt{rp}\Gamma(o, \tilde{x}_n, d(o, \tilde{x}_n)), \texttt{rp}\Gamma(o, x_n, R))\notag \\
    &\phantom{\leq \quad}
    + d(\texttt{rp}\Gamma(o, x_n, R), \texttt{rp}\Gamma(o, x_n, d(o, \tilde{x}_n)))
    + d(\texttt{rp}\Gamma(o, x_n, d(o, \tilde{x}_n)), \texttt{rp}\Gamma(o, x_n, \lfloor d(o, \tilde{x}_n) \rfloor))\notag \\
    &\leq \lambda + k + \lambda  |R - d(o, \tilde{x}_n)| + k 
    + \lambda + k \notag \\
    &\leq \lambda(\theta(0) + 2) + 3k \notag.
\end{align}
By \cref{eq:qg-psi-metric} and \cref{eq:qg-close-of-qg}, there is some $N \in \mathbb{N}$ such that we have
\begin{align*}
    &d(\texttt{rp}\Gamma(o, \tilde{x}_n, \lfloor d(o, \tilde{x}_n) \rfloor), \texttt{rp}\Gamma(o, x_m, \lfloor d(o, \tilde{x}_n) \rfloor))\\
    &\leq d(\texttt{rp}\Gamma(o, \tilde{x}_n, \lfloor d(o, \tilde{x}_n) \rfloor), \texttt{rp}\Gamma(o, x_n, \lfloor d(o, \tilde{x}_n) \rfloor)) + 1\\
    &\leq \lambda(\theta(0) + 2) + 3k + 1
\end{align*}
for any $n, m \in \mathbb{N}$ with $n, m \geq N$.
Therefore, we have
\begin{align*}
    &d(\texttt{rp}\Gamma(o, x_n, R), \texttt{rp}\Gamma(o, x_m, R))\\
    &\leq d(\texttt{rp}\Gamma(o, x_n, R), \texttt{rp}\Gamma(o, x_n, d(o, \tilde{x}_n)))
    + d(\texttt{rp}\Gamma(o, x_n, d(o, \tilde{x}_n)),
    \texttt{rp}\Gamma(o, x_n, \lfloor d(o, \tilde{x}_n)\rfloor))\\
    &\phantom{\leq \quad}
    + d( \texttt{rp}\Gamma(o, x_n, \lfloor d(o, \tilde{x}_n)\rfloor),  \texttt{rp}\Gamma(o, \tilde{x}_n, \lfloor d(o, \tilde{x}_n)\rfloor))\\
    &\phantom{\leq \quad}
    + d(\texttt{rp}\Gamma(o, \tilde{x}_n, \lfloor d(o, \tilde{x}_n) \rfloor), \texttt{rp}\Gamma(o, x_m, \lfloor d(o, \tilde{x}_n) \rfloor))\\
    &\phantom{\leq \quad}
    + d(\texttt{rp}\Gamma(o, x_m, \lfloor d(o, \tilde{x}_n) \rfloor), \texttt{rp}\Gamma(o, x_m, d(o, \tilde{x}_n)))\\
    &\phantom{\leq \quad}
    + d(\texttt{rp}\Gamma(o, x_m, d(o, \tilde{x}_n)), \texttt{rp}\Gamma(o, x_m, R))\\
    &\leq \lambda \theta(0) + k + \lambda + k + \lambda(\theta(0) + 2) + 3k +  
    \lambda(\theta(0) + 2) + 3k + 1 + \lambda + k + \lambda \theta(0) + k\\
    &= \lambda(4\theta(0) + 6) + 10k + 1
\end{align*}
for any $n, m \in \mathbb{N}$ with $n, m \geq N$.

Set $L \coloneq \lambda(4\theta(0) + 6) + 10k + 1$.
Note that $L > 1$. Then, for any $R > 0$, there is some 
$N \in \mathbb{N}$ such that for any $n, m \in \mathbb{N}$ 
with $n, m \geq N$, we have
\[
d(\texttt{rp}\Gamma(o, x_n, R), \texttt{rp}\Gamma(o, x_m, R)) \leq L.
\]

We show, for any $s \in \mathbb{R}_{\geq 0}$, there is some $N' \in \mathbb{N}$ such that for any $n, m \in \mathbb{N}$ with $n, m \geq N'$, we have 
$(x_n \:|\: x_m)_o \geq s$.
From above, there is some $N' \in \mathbb{N}$ such that for any $n, m \in \mathbb{N}$ with $n, m \geq N'$, we have
\[
d(\texttt{rp}\Gamma(o, x_n, ELs), \texttt{rp}\Gamma(o, x_m, ELs)) \leq L.
\]
Since $\Gamma$ is a $(\lambda, k, E, C, \theta)$-coarsely convex bicombing, we have
\begin{align*}
    d(\texttt{rp}\Gamma(o, x_n, s), \texttt{rp}\Gamma(o, x_m, s))
    &= d\left(\texttt{rp}\Gamma\left(o, x_n,\frac{1}{EL} EL s\right), \texttt{rp}\Gamma\left(o, x_m, \frac{1}{EL} ELs\right)\right)\\
    &\leq \frac{1}{EL}Ed(\texttt{rp}\Gamma(o, x_n, ELs), \texttt{rp}\Gamma(o, x_m, ELs)) + C\\
    &\leq 1 + C \leq D_1.
\end{align*}
So we have
\[
    (x_n \:|\: x_m)_o 
    = \sup\{t \in \mathbb{R}_{\geq 0} \:|\: d(\texttt{rp}\Gamma(o, x_n, t), \texttt{rp}\Gamma(o, x_m, t)) \leq D_1\}
    \geq s
\]
for any $n, m \in \mathbb{N}$ with $n, m \geq N'$.
This completes the proof of \cref{item:qg-Gamma-x_n-R-converge}.

Now we will show \cref{item:qg-Gamma-x_n-R-converge-independet}.
Since both $\psi_{x_n}$ and $\psi_{y_n}$  converge uniformly to 
 $\xi$ on $\bar{B}(o, \lambda R + k)$, we have
    \begin{align*}
        |\psi_{x_n}(\tilde{x}_n) - \psi_{y_n}(\tilde{x}_n)| 
        &\leq |\psi_{x_n}(\tilde{x}_n) - \xi(\tilde{x}_n)| + |\psi_{y_n}(\tilde{x}_n) - \xi(\tilde{x}_n)|\\
        &\leq \sup_{u \in \bar{B}(o, \lambda R + k)}|\psi_{x_n}(u) - \xi(u)| + \sup_{u \in \bar{B}(o, \lambda R + k)}|\psi_{y_n}(u) - \xi(u)|\\
        &\rightarrow 0 \text{ as } n \rightarrow \infty.
    \end{align*}
On the other hand, for any $n \in \mathbb{N}$ with 
$d(o, x_n), d(o, y_n) > R$, we have
\begin{align}\label{eq:qg-psi-metric-well-defined}
     &|\psi_{x_n}(\tilde{x}_n) - \psi_{y_n}(\tilde{x}_n)|\\
        &= |\{d_c(\tilde{x}_n, x_n) - d(o, x_n)\} - \{d_c(\tilde{x}_n, y_n) - d(o, y_n)\}|\notag \\
        &= |\{d(o, x_n) - d(o, \tilde{x}_n) + d(\texttt{rp}\Gamma(o, \tilde{x}_n, \lfloor d(o, \tilde{x}_n) \rfloor), \texttt{rp}\Gamma(o, x_n, \lfloor d(o, \tilde{x}_n) \rfloor)) - d(o, x_n)\}\notag \\
        &\phantom{ {=} \quad}
        - \{d(o, y_n) - d(o, \tilde{x}_n) + d(\texttt{rp}\Gamma(o, \tilde{x}_n, \lfloor d(o, \tilde{x}_n) \rfloor), \texttt{rp}\Gamma(o, y_n, \lfloor d(o, \tilde{x}_n) \rfloor)) - d(o, y_n)\}|\notag \\
        &= |d(\texttt{rp}\Gamma(o, \tilde{x}_n, \lfloor d(o, \tilde{x}_n) \rfloor), \texttt{rp}\Gamma(o, x_n, \lfloor d(o, \tilde{x}_n) \rfloor))\notag \\
        &\phantom{ {=} \quad}
        - 
        d(\texttt{rp}\Gamma(o, \tilde{x}_n, \lfloor d(o, \tilde{x}_n) \rfloor), \texttt{rp}\Gamma(o, y_n, \lfloor d(o, \tilde{x}_n) \rfloor))|\notag.
\end{align}
By \cref{eq:qg-close-of-qg} and \cref{eq:qg-psi-metric-well-defined}, there is some $N \in \mathbb{N}$ such that we have
\begin{align*}
    &d(\texttt{rp}\Gamma(o, \tilde{x}_n, \lfloor d(o, \tilde{x}_n) \rfloor), \texttt{rp}\Gamma(o, y_n, \lfloor d(o, \tilde{x}_n) \rfloor))\\
    &\leq d(\texttt{rp}\Gamma(o, \tilde{x}_n, \lfloor d(o, \tilde{x}_n) \rfloor), \texttt{rp}\Gamma(o, x_n, \lfloor d(o, \tilde{x}_n) \rfloor)) + 1\\
    &\leq \lambda(\theta(0) + 2) + 3k + 1
\end{align*}
for any $n \in \mathbb{N}$ with $n \geq N$. 
Therefore, we have 
\begin{align*}
    &d(\texttt{rp}\Gamma(o, x_n, R), \texttt{rp}\Gamma(o, y_n, R))\\
    &\leq d(\texttt{rp}\Gamma(o, x_n, R), \texttt{rp}\Gamma(o, x_n, d(o, \tilde{x}_n)))
    + d(\texttt{rp}\Gamma(o, x_n, d(o, \tilde{x}_n)),
    \texttt{rp}\Gamma(o, x_n, \lfloor d(o, \tilde{x}_n)\rfloor))\\
    &\phantom{\leq \quad}
    + d( \texttt{rp}\Gamma(o, x_n, \lfloor d(o, \tilde{x}_n)\rfloor),  \texttt{rp}\Gamma(o, \tilde{x}_n, \lfloor d(o, \tilde{x}_n)\rfloor))\\
    &\phantom{\leq \quad}
    + d(\texttt{rp}\Gamma(o, \tilde{x}_n, \lfloor d(o, \tilde{x}_n) \rfloor), \texttt{rp}\Gamma(o, y_n, \lfloor d(o, \tilde{x}_n) \rfloor))\\
    &\phantom{\leq \quad}
    + d(\texttt{rp}\Gamma(o, y_n, \lfloor d(o, \tilde{x}_n) \rfloor), \texttt{rp}\Gamma(o, y_n, d(o, \tilde{x}_n)))\\
    &\phantom{\leq \quad}
    + d(\texttt{rp}\Gamma(o, y_n, d(o, \tilde{x}_n)), \texttt{rp}\Gamma(o, y_n, R))\\
    &\leq \lambda \theta(0) + k + \lambda + k + \lambda(\theta(0) + 2) + 3k +  
    \lambda(\theta(0) + 2) + 3k + 1 + \lambda + k + \lambda \theta(0) + k\\
    &= \lambda(4\theta(0) + 6) + 10k + 1 = L
\end{align*}
for any $n \in \mathbb{N}$ with $n \geq N$.
The rest of the proof of \cref{item:qg-Gamma-x_n-R-converge-independet} can be shown as in \cref{item:qg-Gamma-x_n-R-converge}.
\end{proof}
By \cref{lem:qg-const-geodesic}, we define 
$\mathrm{Pr} \colon \partial^c_h X \rightarrow \partial_o X$ as follows.
For any $\xi \in \partial^c_h X$, 
there is a sequence $\{x_n\}_{n \in \mathbb{N}} \in S^{\infty}_o X$ 
such that $\psi_{x_n}$ converges uniformly to $\xi$ on compact sets.
Let $x \in \partial_o X$ be the equivalence class of 
the sequence $\{x_n\}_{n \in \mathbb{N}}$.
Then we define $\mathrm{Pr}(\xi) \coloneqq x$.
By \cref{item:qg-Gamma-x_n-R-converge-independet} of \cref{lem:qg-const-geodesic}, $\mathrm{Pr}(\xi)$ does not depend on the choice of a sequence 
$\{x_n\}_{n \in \mathbb{N}}$ such that 
$\xi = \lim_{n \rightarrow \infty}\psi_{x_n}$.

\begin{proposition}
\label{prop:qg-Pr-is-continuous}
    $\mathrm{Pr} \colon \partial^c_h X \rightarrow \partial_o X$ is continuous.
\end{proposition}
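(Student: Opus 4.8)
The plan is to reduce to sequential continuity and then to rerun, in a family indexed by $m$, the argument already used for \cref{item:qg-Gamma-x_n-R-converge-independet} of \cref{lem:qg-const-geodesic}. Since $X$ is proper it is exhausted by countably many compact sets, so $B(X)$ with the topology of uniform convergence on compact sets, and hence its subspace $\partial^c_h X$, is first countable; it therefore suffices to prove that $\mathrm{Pr}$ is sequentially continuous. So I would take $\xi_m \rightarrow \xi$ in $\partial^c_h X$, set $x \coloneqq \mathrm{Pr}(\xi)$ and $x^{(m)} \coloneqq \mathrm{Pr}(\xi_m)$, and recall from \cref{idealboundary-compact-metric-space} that $x^{(m)} \rightarrow x$ in $\partial_o X$ is equivalent to $(x^{(m)} \:|\: x)_o \rightarrow \infty$. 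Thus the task becomes: given $s > 0$, find $M$ with $(x^{(m)} \:|\: x)_o \geq s$ for all $m \geq M$.

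First I would fix representatives. Choose $\{x_n\}_{n}$ with $\psi_{x_n} \rightarrow \xi$ uniformly on compacta; by \cref{lem:qg-seq-diverges} one has $d(o, x_n) \rightarrow \infty$, and $\{x_n\}_n$ represents $x$. For each $m$, choose $\{x^{(m)}_n\}_n$ with $\psi_{x^{(m)}_n} \rightarrow \xi_m$ uniformly on compacta; then $d(o, x^{(m)}_n) \rightarrow \infty$ by \cref{lem:qg-seq-diverges}, $\{x^{(m)}_n\}_n \in S^{\infty}_o X$ by \cref{item:qg-Gamma-x_n-R-converge} of \cref{lem:qg-const-geodesic}, and it represents $x^{(m)}$. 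Because these are genuine representatives, the extended Gromov product obeys $(x^{(m)} \:|\: x)_o \geq \liminf_{n \rightarrow \infty}(x^{(m)}_n \:|\: x_n)_o$, so it is enough to exhibit $M$ such that for every $m \geq M$ there is $N_m$ with $(x^{(m)}_n \:|\: x_n)_o \geq s$ for all $n \geq N_m$.

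The core of the argument is then the estimate on $|\psi_{x_n}(\tilde{x}_n) - \psi_{x^{(m)}_n}(\tilde{x}_n)|$, where $\tilde{x}_n \coloneqq \texttt{rp}\Gamma(o, x_n, R)$ with $R \coloneqq ELs$ and $\tilde{x}_n$ lies in the fixed compact ball $K \coloneqq \bar{B}(o, \lambda R + k)$. Here I would split through $\xi$ and $\xi_m$:
\[
|\psi_{x_n}(\tilde{x}_n) - \psi_{x^{(m)}_n}(\tilde{x}_n)| \leq \sup_{K}|\psi_{x_n} - \xi| + \sup_{K}|\xi - \xi_m| + \sup_{K}|\xi_m - \psi_{x^{(m)}_n}|,
\]
choose $M$ so that the middle term is $< \tfrac{1}{3}$ for all $m \geq M$ (this is where $M$ is fixed, using only $\xi_m \rightarrow \xi$ uniformly on $K$), then fix $m \geq M$ and choose $N_m$ so that for $n \geq N_m$ both outer terms are $< \tfrac{1}{3}$ and $d(o, x_n), d(o, x^{(m)}_n) > R$. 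With $|\psi_{x_n}(\tilde{x}_n) - \psi_{x^{(m)}_n}(\tilde{x}_n)| < 1$ in hand, the computations \cref{eq:qg-psi-metric} and \cref{eq:qg-close-of-qg} from the proof of \cref{lem:qg-const-geodesic} apply verbatim with $x^{(m)}_n$ in the role of $y_n$ — all constants there depend only on $\lambda, k, E, C, \theta(0)$ — and give $d(\texttt{rp}\Gamma(o, x_n, R), \texttt{rp}\Gamma(o, x^{(m)}_n, R)) \leq L$; then, exactly as at the end of that proof, $d(\texttt{rp}\Gamma(o, x_n, s), \texttt{rp}\Gamma(o, x^{(m)}_n, s)) \leq D_1$, so $(x^{(m)}_n \:|\: x_n)_o \geq s$ for all $n \geq N_m$. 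Since $s > 0$ was arbitrary this yields $(x^{(m)} \:|\: x)_o \rightarrow \infty$, completing the proof.

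The main obstacle I anticipate is purely the order of quantifiers in the third step: $M$ must be extracted from $\xi_m \rightarrow \xi$ on the $s$-dependent compact set $K$ \emph{before} $m$ is chosen, and only afterwards may $N_m$ be extracted (depending on $m$) from the two convergences $\psi_{x_n} \rightarrow \xi$ and $\psi_{x^{(m)}_n} \rightarrow \xi_m$. Granting this, one must also check the (routine but necessary) point that the chain of estimates in \cref{lem:qg-const-geodesic} uses the two sequences only through the single inequality $|\psi_{x_n}(\tilde{x}_n) - \psi_{x^{(m)}_n}(\tilde{x}_n)| < 1$; no genuinely new geometric estimate is needed.
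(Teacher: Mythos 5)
Your proposal is correct and follows essentially the same route as the paper's proof: the same three-way splitting of $|\psi_{x_n}(\tilde{x}_n)-\psi_{x^{(m)}_n}(\tilde{x}_n)|$ through $\xi$ and $\xi_m$ with $M$ fixed first from $\xi_m\rightarrow\xi$ on the $s$-dependent ball, the same reuse of the estimates \cref{eq:qg-psi-metric} and \cref{eq:qg-close-of-qg}, and the same rescaling with $R=ELs$ to force $(x^{(m)}_n\:|\:x_n)_o\geq s$; the paper merely leaves the reduction to sequential continuity implicit and indexes the representatives of $\mathrm{Pr}(\xi_m)$ by a separate parameter $l$, working with $\liminf_{n,l}$ where you use a single index $n$ and the definition of the extended Gromov product directly. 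The only adjustment needed is the threshold in your choice of $N_m$: require $d(o,x_n), d(o,x^{(m)}_n) > \lambda R + k$ (as the paper does) rather than $>R$, so that the minimum in the cone metric is attained at $d(o,\tilde{x}_n)$, which is harmless since both sequences escape to infinity by \cref{lem:qg-seq-diverges}.
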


\begin{proof}
     We take $\xi \in \partial^c_h X$ 
    arbitrarily and assume that the sequence of maps
    $\{\xi_m\}_{m \in \mathbb{N}}$ in 
    $\partial^c_h X$ converges uniformly to $\xi$ on compact sets. 

     We take $\{x_n\}_{n \in \mathbb{N}}$ to be 
    a sequence on $X$ such that 
    $\{\psi_{x_n}\}_{n \in \mathbb{N}}$ 
    converges uniformly to $\xi$ on compact sets.
    By \cref{lem:qg-const-geodesic}, 
    the sequence $\{x_n\}_{n \in \mathbb{N}}$ belongs to $S^{\infty}_o X$,
    and we denote by $x$ the equivalence class of 
    $\{x_n\}_{n \in \mathbb{N}}$ on $\partial_o X$.
    By the definition of the map
    $\mathrm{Pr}$, we get $\mathrm{Pr}(\xi) = x$.

    Similarly,  we take 
    $\{y_{m,l}\}_{l \in \mathbb{N}}$ 
    to be a sequence on $X$ such that 
    $\{\psi_{y_{m,l}}\}_{l \in \mathbb{N}}$ 
    converges uniformly to $\xi_m$ on compact sets for each $m \in \mathbb{N}$. 
    \cref{lem:qg-const-geodesic}, the sequence 
    $\{y_{m, l}\}_{l \in \mathbb{N}}$ belongs to $S^{\infty}_o X$
    each $l \in \mathbb{N}$, and we denote by $y_m$ 
    the equivalence class of 
    $\{y_{m, l}\}_{l \in \mathbb{N}}$ on $\partial_o X$.
    By the definition of the map
    $\mathrm{Pr}$, we get $\mathrm{Pr}(\xi_m) = y_m$ for any $m \in \mathbb{N}$.

    Here, we show 
    $\lim_{m \rightarrow \infty}(x \:|\: y_m)_o = \infty$.
    Since we have 
    $\lim_{n \rightarrow \infty}(x \:|\: x_n)_o = \infty$ and
    $\lim_{l \rightarrow \infty}(y_{m, l} \:|\: y_m)_o = \infty$,
    it is enough to show
    $\lim_{m \rightarrow \infty}\liminf_{n,l \rightarrow \infty}(x_n \:|\: y_{m, l})_o = \infty$.

    We take $R > 0$ arbitrarily. 
    Set $\tilde{x}_n \coloneqq \texttt{rp}\Gamma(o, x_n, R)$.
    We have $\tilde{x}_n \in \bar{B}(o, \lambda R + k)$ for any $n \in \mathbb{N}$.
    Since $\psi_{x_n}$ converges uniformly to $\xi$ on compact sets and $\psi_{y_{m, l}}$ converges uniformly to $\xi_m$ on compact sets for each $m \in \mathbb{N}$, we have
    \begin{align*}
        &\liminf_{n, l \rightarrow \infty}
        |\psi_{x_n}(\tilde{x}_n) - \psi_{y_{m, l}}(\tilde{x}_n)|\\
        &\leq \liminf_{n,l \rightarrow \infty}|\psi_{x_n}(\tilde{x}_n) - \xi(\tilde{x}_n)| + |\xi(\tilde{x}_n) - \xi_m(\tilde{x}_n)| + |\xi_m(\tilde{x}_n) - \psi_{y_{m, l}}(\tilde{x}_n)|\\
        &\leq \lim_{n \rightarrow \infty}\sup_{u \in \bar{B}(o, \lambda R + k)}|\psi_{x_n}(u) - \xi(u)| 
        + \sup_{u \in \bar{B}(o, \lambda R + k)}|\xi(u) - \xi_m(u)|\\
        &\phantom{ \lim_{n \rightarrow \infty} \quad}
        + \lim_{l \rightarrow \infty}\sup_{u \in \bar{B}(o, \lambda R + k)}|\xi_m(u) - \psi_{y_{m, l}}(u)|\\
        &= \sup_{u \in \bar{B}(o, \lambda R + k)}|\xi(u) - \xi_m(u)|.
    \end{align*}
    Since $\xi_m$ converges uniformly to $\xi$ on compact sets, 
    for any $R > 0$, there is some $M > 0$ such that for any $m \in \mathbb{N}$ with $m \geq M$ and for a sufficiently small $\epsilon$, we have
    \[
    \sup_{u \in \bar{B}(o, \lambda R + k)}|\xi(u) - \xi_m(u)| \leq 1 - \epsilon.
    \]
    We fix $m \in \mathbb{N}$ with $m \geq M$.
    Then, we have
    \[
    \liminf_{n, l \rightarrow \infty}
        |\psi_{x_n}(\tilde{x}_n) - \psi_{y_{m, l}}(\tilde{x}_n)|
    \leq 1 - \epsilon.
    \]
    There is some $N_m \in \mathbb{N}$ such that for any 
    $i \in \mathbb{N}$ with $i \geq N_m$, 
    there are some $n, l \in \mathbb{N}$ such that $n, l \geq i$ and we have
    $|\psi_{x_n}(\tilde{x}_n) - \psi_{y_{m, l}}(\tilde{x}_n)| < 1$.

    On the other hand, by \cref{lem:qg-seq-diverges}, for large enough $n \in \mathbb{N}$, we have $d(o, x_n) > \lambda R + k$.
    Similarly, for large enough $l \in \mathbb{N}$, we have
    $d(o, y_{m, l}) > \lambda R + k$. So we have
    \begin{align*}
        &|\psi_{x_n}(\tilde{x}_n) - \psi_{y_{m, l}}(\tilde{x}_n)|\\
        &= |\{d(o, x_n) - d(o, \tilde{x}_n) + d(\texttt{rp}\Gamma(o, x_n, \lfloor d(o, \tilde{x}_n) \rfloor), \texttt{rp}\Gamma(o, \tilde{x}_n, \lfloor d(o, \tilde{x}_n) \rfloor)) - d(o, x_n) \}\\
        &\phantom{ {=} \quad}
        - \{d(o, y_{m, l}) - d(o, \tilde{x}_n) + 
        d(\texttt{rp}\Gamma(o, y_{m, l}, \lfloor d(o, \tilde{x}_n) \rfloor), \texttt{rp}\Gamma(o, \tilde{x}_n, \lfloor d(o, \tilde{x}_n) \rfloor) - d(o, y_{m, l})\}|\\
        &=|d(\texttt{rp}\Gamma(o, x_n, \lfloor d(o, \tilde{x}_n) \rfloor), \texttt{rp}\Gamma(o, \tilde{x}_n, \lfloor d(o, \tilde{x}_n) \rfloor))\\
        &\phantom{ {=} \quad}
        - d(\texttt{rp}\Gamma(o, y_{m, l}, \lfloor d(o, \tilde{x}_n) \rfloor), \texttt{rp}\Gamma(o, \tilde{x}_n, \lfloor d(o, \tilde{x}_n) \rfloor))|
    \end{align*}
    for large enough $n, l \in \mathbb{N}$.
    By the proof of \cref{lem:qg-const-geodesic}, we have
    \[
    d(\texttt{rp}\Gamma(o, x_n, \lfloor d(o, \tilde{x}_n) \rfloor), \texttt{rp}\Gamma(o, \tilde{x}_n, \lfloor d(o, \tilde{x}_n) \rfloor)) \leq \lambda(\theta(0) + 2) + 3k
    \]
    for each $n \in \mathbb{N}$.
    Thus, for any $i \in \mathbb{N}$ with $i \geq N_m$, there is some $n, l \in \mathbb{N}$ such that $n, l \geq i$ and we have 
    \[
    d(\texttt{rp}\Gamma(o, y_{m, l}, \lfloor d(o, \tilde{x}_n) \rfloor), \texttt{rp}\Gamma(o, \tilde{x}_n, \lfloor d(o, \tilde{x}_n) \rfloor))
    \leq \lambda(\theta(0) + 2) + 3k + 1.
    \]
    By the proof of \cref{lem:qg-const-geodesic}, we have
    $|R - d(o, \tilde{x}_n)| \leq \theta(0)$.
    Therefore, for any $i \in \mathbb{N}$ with $i \geq N_m$, there is some $n, l \in \mathbb{N}$ such that $n, l \geq i$ and we have 
    \begin{align*}
        &d(\texttt{rp}\Gamma(o, y_{m,l}, R), \texttt{rp}\Gamma(o, x_n, R))\\
        &\leq d(\texttt{rp}\Gamma(o, y_{m,l}, R), \texttt{rp}\Gamma(o, y_{m,l}, d(o, \tilde{x}_n)))
        + d(\texttt{rp}\Gamma(o, y_{m,l}, d(o, \tilde{x}_n)), \texttt{rp}\Gamma(o, y_{m,l}, \lfloor d(o, \tilde{x}_n)\rfloor))\\
        &\phantom{ \leq \quad} 
        + d(\texttt{rp}\Gamma(o, y_{m,l}, \lfloor d(o, \tilde{x}_n)\rfloor), \texttt{rp}\Gamma(o, \tilde{x}_n, \lfloor d(o, \tilde{x}_n) \rfloor))\\
        &\phantom{\leq \quad}
        + d(\texttt{rp}\Gamma(o, \tilde{x}_n, \lfloor d(o, \tilde{x}_n) \rfloor), \texttt{rp}\Gamma(o, x_n, \lfloor d(o, \tilde{x}_n) \rfloor)\\
        &\phantom{\leq \quad}
        + d(\texttt{rp}\Gamma(o, x_n, \lfloor d(o, \tilde{x}_n) \rfloor), \texttt{rp}\Gamma(o, x_n, d(o, \tilde{x}_n)))
        + d(\texttt{rp}\Gamma(o, x_n, d(o, \tilde{x}_n)), \texttt{rp}\Gamma(o, x_n, R))\\
        &\leq \lambda(4\theta(0) + 6) + 10k + 1.
    \end{align*}
    Set $L \coloneqq \lambda(4\theta(0) + 6) + 10k + 1$.
    Note $L > 1$.
    For any $R > 0$, there is some $M \in \mathbb{N}$ such that for any $m \in \mathbb{N}$ with $m \geq M$, there is some $N_m \in \mathbb{N}$ such that for any $i \in \mathbb{N}$ with $i \geq N_m$, there are some $n, l \in \mathbb{N}$ such that $n, l \geq i$
    and we have
    \[
    d(\texttt{rp}\Gamma(o, y_{m,l}, R), \texttt{rp}\Gamma(o, x_n, R)) \leq L.
    \]

    We show, for any $s \in \mathbb{R}_{\geq 0}$, there is some $M' \in \mathbb{N}$ such that for any $m \in \mathbb{N}$ with $m \geq M'$, we have $\liminf_{n,l \rightarrow \infty}(x_n \:|\: y_{m, l})_o \geq s$.
    From above, there is some $M' \in \mathbb{N}$ such that for any 
    $m \in \mathbb{N}$ with $m \geq M'$, 
    there is some $N_m \in \mathbb{N}$ such that for any $i \in \mathbb{N}$ with $i \geq N_m$, there are some $n, l \in \mathbb{N}$ such that $n, l \geq i$ and we have
    \[
    d(\texttt{rp}\Gamma(o, y_{m,l}, ELs), \texttt{rp}\Gamma(o, x_n, ELs)) \leq L.
    \]
    Since $\Gamma$ is a $(\lambda, k, E, C, \theta)$-coarsely convex bicombing, we have
    \begin{align*}
         d(\texttt{rp}\Gamma(o, y_{m,l}, s), \texttt{rp}\Gamma(o, x_n, s))
        &= d\left(\texttt{rp}\Gamma\left(o, y_{m,l},\frac{1}{EL} EL s\right), \texttt{rp}\Gamma\left(o, x_n, \frac{1}{EL} ELs\right)\right)\\
        &\leq \frac{1}{EL}Ed(\texttt{rp}\Gamma(o, y_{m,l}, ELs), \texttt{rp}\Gamma(o, x_n, ELs)) + C\\
        &\leq 1 + C \leq D_1.
    \end{align*}
    This means that for any $s \in \mathbb{R}_{\geq 0}$, there is some $M' \in \mathbb{N}$ such that for any $m \in \mathbb{N}$ with $m \geq M'$, we have $\liminf_{n,l \rightarrow \infty}(x_n \:|\: y_{m, l})_o \geq s$.
    Therefore, we have 
    \[
    \lim_{m \rightarrow \infty}\liminf_{n, l \rightarrow \infty}(x_n \:|\: y_{m, l})_o = \infty.
    \]
\end{proof}

\subsection{Reduced horoboundary}

\begin{definition}\label{def:reduced-horoboundary}
    Let $(X,d)$ be a proper $(\lambda, k, E, C, \theta)$-coarsely convex space and let $\partial^c_h X$ be the horoboundary defined above, using the cone metric.

 We define that two functions $\xi$ and $\eta$ in $\partial^c_h X$ are equivalent,
 denoted by $\xi \sim \eta$, if 
    \[
    \sup_{u \in X}|\xi(u) - \eta(u)| < \infty.
    \]

 This determines an equivalence relation on 
 $\partial^c_h X$. 
 A \emph{reduced horoboundary of $X$ with cone metric}, 
 denoted by $\partial^c_h X/\sim$, is the quotient of 
 $\partial^c_h X$ by the equivalence relation $\sim$ with the quotient topology.
 We also say that $\partial^c_h X/\sim$ is a reduced horoboundary of $(X,d_c)$.
\end{definition}

Let $\pi \colon \partial^c_h X \rightarrow \partial^c_h X/\sim$ be the natural projection.
If $\xi \sim \eta$, 
we have $\mathrm{Pr}(\xi) = \mathrm{Pr}(\eta)$.
So there uniquely exists a continuous map 
$f \colon \partial^c_h X/\sim \, \rightarrow \partial_o X$ 
such that $\mathrm{Pr} = f \circ \pi$.
We construct the inverse of $f$.

\begin{definition}
    For any $x \in \partial_o X$, we define a $(\lambda, k_1)$-quasi geodesic ray 
    $\gamma_x \colon \mathbb{R}_{\geq 0} \rightarrow X$ as follows
    \[
      \gamma_x(-) \coloneqq \texttt{rp}\bar{\Gamma}(o, x, -)
    \]
    where $k_1 \coloneqq \lambda + k$.
    Then we define $b_{x} \colon X \rightarrow \mathbb{R}$
    as the \emph{Busemann function respect to the cone metric} by
    \[
    u \mapsto -d(o, u) + d(\texttt{rp}\Gamma(o, u, \lfloor d(o, u) \rfloor), \gamma_x(\lfloor d(o, u) \rfloor)).
    \]
\end{definition}

\begin{proposition}\label{prop:qg-g-is-well-defined}
    The Busemann function respect to the cone metric is a horofunction.
\end{proposition}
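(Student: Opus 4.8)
The plan is to check directly the two conditions defining
$\partial^c_h X = \cl\psi(X)\setminus B_{b,\lambda,E}(X)$, namely that $b_x\in\cl\psi(X)$ and that $b_x\notin B_{b,\lambda,E}(X)$.

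\medskip
\textbf{Membership in $\cl\psi(X)$.} By \cref{item:combing-at-infinity--convergence} of \cref{lem:combing-at-infinity} I would fix a sequence $\{x_n\}_{n\in\mathbb N}$ in $X$ with $\texttt{rp}\Gamma(o,x_n,j)\to\gamma_x(j)$ for every $j\in\mathbb N$. First I would note that $d(o,x_n)\to\infty$: if some subsequence stayed in a ball $\bar B(o,R)$, then for every integer $j>R$ one would have $\texttt{rp}\Gamma(o,x_n,j)=x_n$ along that subsequence, forcing $\gamma_x(j)$ to be independent of $j$, which contradicts $\gamma_x$ being a genuine $(\lambda,k_1)$-quasi-geodesic ray (\cref{item:combing-at-infinity--geodesic} of \cref{lem:combing-at-infinity}). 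Then, given a compact $K\subseteq X$, put $R\coloneqq\sup_{u\in K}d(o,u)$ and take $n$ with $d(o,x_n)>R$. For $u\in K$ the minimum in \cref{def:qg-cone-metric} equals $d(o,u)$, and the term $|d(o,u)-d(o,x_n)|=d(o,x_n)-d(o,u)$ cancels the $-d(o,x_n)$ in $\psi_{x_n}(u)=d_c(u,x_n)-d(o,x_n)$, leaving
\[
\psi_{x_n}(u)=-d(o,u)+d\bigl(\texttt{rp}\Gamma(o,u,\lfloor d(o,u)\rfloor),\,\texttt{rp}\Gamma(o,x_n,\lfloor d(o,u)\rfloor)\bigr).
\]
Comparing with $b_x(u)$ and using the triangle inequality,
$|\psi_{x_n}(u)-b_x(u)|\le d\bigl(\texttt{rp}\Gamma(o,x_n,\lfloor d(o,u)\rfloor),\gamma_x(\lfloor d(o,u)\rfloor)\bigr)$. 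As $u$ ranges over $K$ the integer $\lfloor d(o,u)\rfloor$ takes only the finitely many values $0,1,\dots,\lfloor R\rfloor$, so $\sup_{u\in K}|\psi_{x_n}(u)-b_x(u)|\le\max_{0\le j\le\lfloor R\rfloor}d(\texttt{rp}\Gamma(o,x_n,j),\gamma_x(j))\to0$. Hence $\psi_{x_n}\to b_x$ uniformly on compact sets, so $b_x\in\cl\psi(X)$.

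\medskip
\textbf{$b_x\notin B_{b,\lambda,E}(X)$.} Here I would test $b_x$ along the ray $\gamma_x$ itself. For $N\in\mathbb N$ we have $\gamma_x(N)=\texttt{rp}\bar\Gamma(o,x,N)$ and also $\gamma_x(N)=\texttt{rp}\bar\Gamma(o,\gamma_x(N),d(o,\gamma_x(N)))$ (the endpoint of the combing ray from $o$ to $\gamma_x(N)$, using \cref{item:combing-at-infinity--restriction} of \cref{lem:combing-at-infinity} and the definition of $\texttt{rp}\Gamma$). Item~(5) of \cref{lem:gromov-product} then gives $d(\texttt{rp}\Gamma(o,\gamma_x(N),t),\gamma_x(t))\le\Omega$ for all $t\in[0,d(o,\gamma_x(N))]$, in particular for $t=\lfloor d(o,\gamma_x(N))\rfloor$, whence
\[
b_x(\gamma_x(N))=-d(o,\gamma_x(N))+d\bigl(\texttt{rp}\Gamma(o,\gamma_x(N),\lfloor d(o,\gamma_x(N))\rfloor),\gamma_x(\lfloor d(o,\gamma_x(N))\rfloor)\bigr)\le-d(o,\gamma_x(N))+\Omega.
\]
Since $\gamma_x$ is a $(\lambda,k_1)$-quasi-geodesic ray with $\gamma_x(0)=o$, this is at most $-\tfrac1\lambda N+k_1+\Omega\to-\infty$. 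On the other hand, for any $z\in X$, \cref{lem:qg-triangle-inequality} applied to the triple $o,z,\gamma_x(N)$ yields $d_c(\gamma_x(N),z)\ge\frac{1}{\lambda E}d(o,\gamma_x(N))-d(o,z)-\frac{4\lambda+2k+C}{\lambda E}$, so $\psi_z(\gamma_x(N))=d_c(\gamma_x(N),z)-d(o,z)\to+\infty$. Thus for any bounded $\beta$ the quantity $\frac{1}{\lambda E}\psi_z(\gamma_x(N))-\beta(\gamma_x(N))$ tends to $+\infty$, contradicting the inequality $\frac{1}{\lambda E}\psi_z-\beta\le b_x$ required in \cref{def:qg-cone-metric-horoboundary}. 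Hence $b_x\notin B_{b,\lambda,E}(X)$, and together with the previous step $b_x\in\partial^c_h X$.

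\medskip
I expect the first step to be the main obstacle: it requires carefully unwinding \cref{def:qg-cone-metric} (the cancellation between the absolute value and $-d(o,x_n)$, and the observation that the floor term occurs identically in $\psi_{x_n}$ and $b_x$, so the discontinuity of $u\mapsto\lfloor d(o,u)\rfloor$ does no harm), plus the preliminary fact that $d(o,x_n)\to\infty$. The second step is essentially forced once item~(5) of \cref{lem:gromov-product} and \cref{lem:qg-triangle-inequality} are in hand.
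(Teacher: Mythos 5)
Your proof is correct, and the two halves compare differently with the paper. The first half (membership in $\cl\psi(X)$) is essentially the paper's own argument: the same cancellation in $\psi_{x_n}(u)$ once $d(o,x_n)>R$, the same reverse-triangle estimate, and the same reduction to the finitely many integer values of $\lfloor d(o,u)\rfloor$; your separate justification that $d(o,x_n)\to\infty$ is a harmless addition (the paper gets it from $\{x_n\}\in S^{\infty}_o X$, since $(x_n\mid x_m)_o\le d(o,x_n)$). The second half (non-membership in $B_{b,\lambda,E}(X)$) takes a genuinely different route. The paper argues along the approximating sequence: it evaluates $\frac{1}{\lambda E}\psi_y-\psi_{x_n}$ at the points $\tilde x_n=\texttt{rp}\Gamma(o,x_n,R)$, bounds this below by a quantity of order $R$ using the estimate $d(\texttt{rp}\Gamma(o,x_n,\lfloor d(o,\tilde x_n)\rfloor),\texttt{rp}\Gamma(o,\tilde x_n,\lfloor d(o,\tilde x_n)\rfloor))\le\lambda(\theta(0)+2)+3k$ from the proof of \cref{lem:qg-const-geodesic}, and then transfers the bound to $b_x$ via the uniform convergence established in the first half. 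You instead work directly on the limit ray: item (5) of \cref{lem:gromov-product} applied to $x$ and $\gamma_x(N)$ gives $b_x(\gamma_x(N))\le -d(o,\gamma_x(N))+\Omega\to-\infty$, while \cref{lem:qg-triangle-inequality} forces $\psi_z(\gamma_x(N))\to+\infty$ for each fixed $z$, so the lower inequality in \cref{def:qg-cone-metric-horoboundary} fails for every $z$ and bounded $\beta$ --- and violating one of the two inequalities is indeed enough for non-membership, exactly as in the paper. Your version is shorter and avoids the second limiting step; what the paper's version buys is an explicit lower bound depending only on $R$, $d(o,y)$ and the constants, which is quoted again in the proof of \cref{prop:g-is-continuous} for the functions $b_{\tilde\gamma}$, where your item-(5) argument would not apply verbatim because $\tilde\gamma$ there is an abstract limit quasi-geodesic ray rather than one of the form $\texttt{rp}\bar{\Gamma}(o,x,-)$. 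One small point you share with the paper: both estimates implicitly use $\gamma_x(0)=o$ (so that $d(o,\gamma_x(N))\ge\frac{1}{\lambda}N-k_1$, resp.\ so that the $t=0$ term vanishes); if one prefers not to assume this, your own appeal to item (5) at $t=0$ already yields $d(o,\gamma_x(0))\le\Omega$, which suffices.
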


\begin{proof}
Let $x \in \partial_o X$ and define
$\gamma_x \colon \mathbb{R}_{\geq 0} \rightarrow X$ by
$\gamma_x(-) = \texttt{rp}\bar{\Gamma}(o, x, -)$.
Then, there is a sequence $\{x_n\}_{n \in \mathbb{N}} \in S^{\infty}_o X$ such that 
we have $(x_n \:|\: x)_o \rightarrow \infty$ as $n \rightarrow \infty$ and 
the sequence of maps
$\{\texttt{rp}\Gamma(o, x_n, -)|_{\mathbb{N}} \colon \mathbb{N} \rightarrow X\}_{n \in \mathbb{N}}$ converges pointwise to
$\gamma_x(-) = \texttt{rp}\bar{\Gamma}(o, x, -)$.

First, we show $b_x \in \cl \psi(X)$.
For any $R > 0$, by taking $n \in \mathbb{N}$ large enough, we have
\begin{align*}
    &\sup_{u \in \bar{B}(o, R)}|\psi_{x_n}(u) - b_x(u)|\\
    &= \sup_{u \in \bar{B}(o, R)}|\{d(o, x_n) - d(o, u) + d(\texttt{rp}\Gamma(o, x_n, \lfloor d(o, u) \rfloor), \texttt{rp}\Gamma(o, u, \lfloor d(o, u) \rfloor)) - d(o, x_n)\}\\
    &\phantom{\sup_{u \in \bar{B}(o, R)} \quad \quad}
    - \{- d(o, u) + d(\texttt{rp}\Gamma(o, u, \lfloor d(o, u) \rfloor), \gamma_x(\lfloor d(o, u) \rfloor))\}\\
    &= \sup_{u \in \bar{B}(o, R)}|d(\texttt{rp}\Gamma(o, x_n, \lfloor d(o, u) \rfloor), \texttt{rp}\Gamma(o, u, \lfloor d(o, u) \rfloor) - d(\texttt{rp}\Gamma(o, u, \lfloor d(o, u) \rfloor, \gamma_x(\lfloor d(o, u) \rfloor))|\\
    &\leq \sup_{u \in \bar{B}(o, R)}d(\texttt{rp}\Gamma(o, x_n, \lfloor d(o, u) \rfloor), \gamma_x(\lfloor d(o, u) \rfloor))\\
    &= \max_{t \in \{1, 2, \cdots, \lfloor  R \rfloor\}}d(\texttt{rp}\Gamma(o, x_n, t), \gamma_x(t)) \rightarrow 0 \text{ as } n \rightarrow \infty.
\end{align*}

Second, we show $b_x \notin B_{b, \lambda, E}(X)$ by contradiction. That is, there are some $y \in X$
and $\beta \in B_b(X)$ such that we have

\begin{align}
 \label{eq:pshi_y-b_x_is_bounded}
 \frac{1}{\lambda E}\psi_y - b_x \leq \beta. 
\end{align}

We take $R > 0$ with
\[
d(o, y) \leq \frac{1}{\lambda}R - k.
\]
Set $\tilde{x}_n \coloneqq \texttt{rp}\Gamma(o, x_n, R)$.
Then, for any $n \in \mathbb{N}$, we have
\[
d(o, y) \leq \frac{1}{\lambda}R - k \leq d(o, \tilde{x}_n) \leq \lambda R + k.
\]
For large enough $n \in \mathbb{N}$, we have $d(o, x_n) > \lambda R + k$.

By the proof of \cref{lem:qg-const-geodesic}, we have
\[
d(\texttt{rp}\Gamma(o, x_n, \lfloor d(o, \tilde{x}_n) \rfloor), \texttt{rp}\Gamma(o, \tilde{x}_n, \lfloor d(o, \tilde{x}_n) \rfloor)) \leq \lambda(\theta(0) + 2) + 3k.
\]
Thus, we have
\begin{align*}
    &\frac{1}{\lambda E}\psi_y(\tilde{x}_n) - \psi_{x_n}(\tilde{x}_n)\\
    &= \frac{1}{\lambda E}\{d(o, \tilde{x}_n) - d(o, y) + d(\texttt{rp}\Gamma(o, \tilde{x}_n,\lfloor d(o, y) \rfloor), \texttt{rp}\Gamma(o, y, \lfloor d(o, y) \rfloor)) - d(o, y)\}\\
    &\phantom{ {=} \quad}
    -\{d(o, x_n) - d(o, \tilde{x}_n) + d(\texttt{rp}\Gamma(o, x_n, \lfloor d(o, \tilde{x}_n) \rfloor), \texttt{rp}\Gamma(o, \tilde{x}_n, \lfloor d(o, \tilde{x}_n)\rfloor)) - d(o, x_n)\}\\
    &= \left(\frac{1}{\lambda E} + 1 \right)d(o, \tilde{x}_n) - \frac{2}{\lambda E}d(o, y) + \frac{1}{\lambda E}d(\texttt{rp}\Gamma(o, \tilde{x}_n,\lfloor d(o, y) \rfloor), \texttt{rp}\Gamma(o, y, \lfloor d(o, y) \rfloor))\\
    &\phantom{ {=} \quad}
    - d(\texttt{rp}\Gamma(o, x_n, \lfloor d(o, \tilde{x}_n) \rfloor), \texttt{rp}\Gamma(o, \tilde{x}_n, \lfloor d(o, \tilde{x}_n)\rfloor))\\
    &\geq \left(\frac{1}{\lambda E} + 1 \right) \left(\frac{1}{\lambda}R - k \right) 
    - \frac{2}{\lambda E}d(o, y)
    - \lambda(\theta(0) + 2) - 3k.
\end{align*}
Therefore, we have
\begin{align*}
    \sup_{u \in \bar{B}(o, \lambda R + k)}
    \frac{1}{\lambda E}\psi_y(u) - \psi_{x_n}(u) 
    \geq \left(\frac{1}{\lambda E} + 1 \right) \left(\frac{1}{\lambda}R - k \right) 
    - \frac{2}{\lambda E}d(o, y)
    - \lambda(\theta(0) + 2) - 3k
\end{align*}
for large enough $n \in \mathbb{N}$.

On the other hand, since $\psi_{x_n}$ converges uniformly to $b_x$ on compact sets, we have 
\[
\lim_{n \rightarrow \infty}
\sup_{u \in \bar{B}(o, \lambda R + k)}
\frac{1}{\lambda E}\psi_y(u) - \psi_{x_n}(u) 
= \sup_{u \in \bar{B}(o, \lambda R + k)}
    \frac{1}{\lambda E}\psi_y(u) - b_x(u).
\]
So we have
\[
\sup_{u \in \bar{B}(o, \lambda R + k)}
    \frac{1}{\lambda E}\psi_y(u) - b_x(u)
    \geq \left(\frac{1}{\lambda E} + 1 \right) \left(\frac{1}{\lambda}R - k \right) 
    - \frac{2}{\lambda E}d(o, y)
    - \lambda(\theta(0) + 2) - 3k.
\]
Since the right-hand side of the above equation contains $R > 0$ 
which can be arbitrary large, this contradicts to
\cref{eq:pshi_y-b_x_is_bounded}.
\end{proof}

For $x \in \partial_o X$, we define $g \colon \partial_o X \rightarrow \partial^c_h X / \sim$ as $g(x) \coloneqq \pi(b_x)$.

\begin{proposition}
\label{prop:g-is-inverse-of-f}
    The map $g$ is the inverse of $f$.
\end{proposition}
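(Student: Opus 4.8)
The plan is to verify that $g$ is a two-sided inverse of $f$, i.e.\ that $f\circ g=\mathrm{id}_{\partial_o X}$ and $g\circ f=\mathrm{id}_{\partial^c_h X/\sim}$. The first identity is immediate from the constructions: fix $x\in\partial_o X$; since $f\circ\pi=\mathrm{Pr}$ we have $f(g(x))=f(\pi(b_x))=\mathrm{Pr}(b_x)$, and in the proof of \cref{prop:qg-g-is-well-defined} we exhibited a sequence $\{x_n\}_{n\in\mathbb N}\in S^{\infty}_o X$ representing $x$ with $\psi_{x_n}\to b_x$ uniformly on compact sets, so by the very definition of $\mathrm{Pr}$ we get $\mathrm{Pr}(b_x)=x$, hence $f(g(x))=x$.

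For the second identity, let $\eta=\pi(\xi)\in\partial^c_h X/\sim$ and set $x:=f(\eta)=\mathrm{Pr}(\xi)$. It suffices to prove $b_x\sim\xi$, i.e.\ $\sup_{u\in X}|\xi(u)-b_x(u)|<\infty$, since then $g(f(\eta))=\pi(b_x)=\pi(\xi)=\eta$. I would pick $\{x_n\}$ with $\psi_{x_n}\to\xi$ uniformly on compact sets; by \cref{lem:qg-const-geodesic} it lies in $S^{\infty}_o X$ and represents $x$. Let $\{x'_n\}\in S^{\infty}_o X$ be the sequence from the proof of \cref{prop:qg-g-is-well-defined}, which also represents $x$ and satisfies $\texttt{rp}\Gamma(o,x'_n,-)|_{\mathbb N}\to\gamma_x(-)|_{\mathbb N}$ pointwise, so $\psi_{x'_n}\to b_x$. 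Fix $u\in X$ and put $t:=\lfloor d(o,u)\rfloor$. For every $n$ with $d(o,x_n),d(o,x'_n)>d(o,u)$ the definition of the cone metric collapses, giving
\[
\psi_{x_n}(u)-\psi_{x'_n}(u)=d\bigl(\texttt{rp}\Gamma(o,u,t),\texttt{rp}\Gamma(o,x_n,t)\bigr)-d\bigl(\texttt{rp}\Gamma(o,u,t),\texttt{rp}\Gamma(o,x'_n,t)\bigr),
\]
so $|\psi_{x_n}(u)-\psi_{x'_n}(u)|\le d(\texttt{rp}\Gamma(o,x_n,t),\texttt{rp}\Gamma(o,x'_n,t))$ by the triangle inequality; letting $n\to\infty$ yields
\[
|\xi(u)-b_x(u)|\le\limsup_{n\to\infty}d\bigl(\texttt{rp}\Gamma(o,x_n,t),\texttt{rp}\Gamma(o,x'_n,t)\bigr).
\]

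It then remains to bound the right-hand side by a constant independent of the integer $t$ (hence of $u$). Since $\{x_n\}$ and $\{x'_n\}$ represent the same boundary point they are equivalent in $S^{\infty}_o X$, i.e.\ $(x_n\mid x'_n)_o\to\infty$; unwinding the definition of the Gromov product, this forces $d(o,x_n),d(o,x'_n)\to\infty$ and $\sup\{s:d(\texttt{rp}\Gamma(o,x_n,s),\texttt{rp}\Gamma(o,x'_n,s))\le D_1\}\to\infty$. Hence for each fixed $t$ and all large $n$ there is a parameter $s_0>t$ at which the two combings are $D_1$-close, and the coarse-convexity contraction inequality in \cref{def:cconvex-bicombing} — applied exactly as in the internal estimates of the proof of \cref{lem:qg-const-geodesic} — transfers this to a bound $d(\texttt{rp}\Gamma(o,x_n,t),\texttt{rp}\Gamma(o,x'_n,t))\le\Omega'$ with $\Omega'$ depending only on the structural constants. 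This gives $\sup_{u\in X}|\xi(u)-b_x(u)|\le\Omega'<\infty$, so $b_x\sim\xi$, which finishes the proof. I expect the main obstacle to be precisely this transfer step made \emph{uniform} in $t$: the Gromov product is a supremum over a set of parameters that need not be downward closed, so one must carefully place the witnessing parameter $s_0$ in the range $[t,\min\{d(o,x_n),d(o,x'_n)\}]$, handling separately the regime in which one of the reparametrised combings has already reached its endpoint while the other keeps moving; once such an $s_0$ is secured, the contraction property of the bicombing (together with \cref{lem:gromov-product}) closes the argument with a bound independent of $t$.
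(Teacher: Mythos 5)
Your proposal is correct and follows essentially the same route as the paper: $f\circ g=\mathrm{id}$ falls out of \cref{prop:qg-g-is-well-defined} and the definition of $\mathrm{Pr}$, and $g\circ f=\mathrm{id}$ is reduced to bounding $d(\texttt{rp}\Gamma(o,x_n,\lfloor d(o,u)\rfloor),\texttt{rp}\Gamma(o,x'_n,\lfloor d(o,u)\rfloor))$ uniformly in $u$, which the paper does exactly as you sketch, using divergence of the Gromov product of the two representing sequences to find a witness $s\geq\lfloor d(o,u)\rfloor$ with $D_1$-closeness and then the convexity inequality of \cref{def:cconvex-bicombing} to get the uniform bound $ED_1+C$. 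The "transfer step" you flag as the main obstacle is precisely this last estimate in the paper's proof, so your plan closes in the intended way.
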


\begin{proof}
    First, we show $f \circ g = \mathrm{id}_{\partial_o X}$.
    We take $x \in \partial_o X$ arbitrarily. 
    Then, we have $g(x) = \pi(b_x)$.
    By \cref{prop:qg-g-is-well-defined}, there is a sequence 
    $\{x_n\}_{n \in \mathbb{N}} \in S^{\infty}_o X$ such that 
    we have
    $(x_n \:|\: x)_o \rightarrow \infty$
     as  
     $n \rightarrow \infty$
     and 
    $\lim_{n \rightarrow \infty}\psi_{x_n} = b_x$.

    Thus, we obtain
    \[
    f \circ g(x) = f(\pi(b_x)) = \mathrm{Pr}(b_x) = x.
    \]

    Second, we show $g \circ f = \mathrm{id}_{\partial^c_h X / \sim}$.
    We take $\xi \in \partial^c_h X$ arbitrarily.
    By \cref{lem:qg-const-geodesic}, there is a sequence of maps $\{\psi_{y_n}\}_{n \in \mathbb{N}}$ converges uniformly to $\xi$ on compact sets, and we have $\{y_n\}_{n \in \mathbb{N}} \in S^{\infty}_o X$.
    Let $y \in \partial_o X$ be an equivalence class of a sequence $\{y_n\}_{n \in \mathbb{N}} \in S^{\infty}_o X$.
    By the definition of $\mathrm{Pr}$, we have $\mathrm{Pr}(\xi) = y$.

    There is a sequence $\{z_n\}_{n \in \mathbb{N}} \in S^{\infty}_o X$ such that we have 
    $(z_n \:|\: y)_o \rightarrow  \infty$ as $n \rightarrow \infty$
    and the sequence of maps $\{\psi_{z_n}\}_{n \in \mathbb{N}}$ converges uniformly to $b_y$ on compact sets.
    By the definition of $f$ and $g$, we have
    \[
    g \circ f(\pi(\xi)) = g(\mathrm{Pr}(\xi)) = g(y) = \pi(b_y).
    \]
    Thus, it is enough to show
    \[
    \sup_{u \in X}|\xi(u) - b_y(u)| 
    = \sup_{u \in X}\lim_{n \rightarrow \infty}|\psi_{y_n}(u) - \psi_{z_n}(u)| < \infty.
    \]
    
    We take $u \in X$ arbitrarily.
    For large enough $n \in \mathbb{N}$, we have
    \begin{align*}
       &|\psi_{y_n}(u) - \psi_{z_n}(u)|\\
       &= |\{d(o, y_n) - d(o, u) + d(\texttt{rp}\Gamma(o, y_n, \lfloor d(o, u) \rfloor), \texttt{rp}\Gamma(o, u, 
       \lfloor d(o, u) \rfloor)) - d(o, y_n)\}\\
       & \phantom{ = \quad}
       -\{d(o, z_n) - d(o, u) + d(\texttt{rp}\Gamma(o, z_n, \lfloor d(o, u) \rfloor), \texttt{rp}\Gamma(o, u, 
       \lfloor d(o, u) \rfloor)) - d(o, z_n)\}|\\
       &\leq d(\texttt{rp}\Gamma(o, y_n, \lfloor d(o, u) \rfloor), \texttt{rp}\Gamma(o, z_n, \lfloor d(o, u) \rfloor))
    \end{align*}

    By $(y_n \:|\: y)_o \rightarrow \infty$ and $(z_n \:|\: y)_o \rightarrow \infty$ as $n \rightarrow \infty$, we have
    \[
    \liminf_{n \rightarrow \infty}(y_n \:|\: z_n)_o = \infty.
    \]

    Thus, there is some $N \in \mathbb{N}$ such that for any $n \in \mathbb{N}$ with $n \geq N$, there is some $n' \in \mathbb{N}$ with $n' \geq n$ such that we have 
    \[
    \sup\{ t \in \mathbb{R}_{\geq 0} \:|\: d(\texttt{rp}\Gamma(o, y_{n'}, t), \texttt{rp}\Gamma(o, z_{n'}, t)) \leq D_1\}
    \geq \lfloor d(o, u) \rfloor.
    \]
    So, there is some $s \geq \lfloor d(o, u) \rfloor$ such that we have 
    \[
    d(\texttt{rp}\Gamma(o, y_{n'}, s), \texttt{rp}\Gamma(o, z_{n'}, s)) \leq D_1.
    \]
    Therefore, there is some $N \in \mathbb{N}$ such that for any $n \in \mathbb{N}$ with $n \geq N$, there is some 
    $n' \in \mathbb{N}$ with $n' \geq n$ such that we have
    \begin{align*}
         &d(\texttt{rp}\Gamma(o, y_{n'}, \lfloor d(o, u) \rfloor), \texttt{rp}\Gamma(o, z_{n'}, \lfloor d(o, u) \rfloor)\\
         &=  d\left(\texttt{rp}\Gamma \left(o, y_{n'}, \frac{\lfloor d(o, u) \rfloor}{s}s \right), \texttt{rp}\Gamma \left(o, z_{n'}, \frac{\lfloor d(o, u) \rfloor}{s}s \right)\right)\\
         &\leq \frac{\lfloor d(o, u) \rfloor}{s}Ed(\texttt{rp}\Gamma(o, y_{n'}, s), \texttt{rp}\Gamma(o, z_{n'}, s)) + C\\
         &\leq ED_1 + C.
    \end{align*}
    From the above, we obtain
    \begin{align*}
        |\xi(u) - b_y(u)| &= \lim_{n \rightarrow \infty}|\psi_{y_n}(u) - \psi_{z_n}(u)|\\
        &= \liminf_{n \rightarrow \infty}|\psi_{y_n}(u) - \psi_{z_n}(u)|\\
        &\leq \liminf_{n \rightarrow \infty}d(\texttt{rp}\Gamma(o, y_n, \lfloor d(o, u) \rfloor), \texttt{rp}\Gamma(o, z_n, \lfloor d(o, u) \rfloor))\\
        &\leq ED_1 + C
    \end{align*}
    for any $u \in X$.
    So, we have
    \[
    \sup_{u \in X}|\xi(u) - b_y(u)| \leq ED_1 + C.
    \]
\end{proof}

We show that $g \colon \partial_o X \rightarrow \partial^c_h X/\sim$ is continuous.
We recall the following lemma
from general topology.

\begin{lemma}\label{lem:general-topology}
 Let $(Z, \mathcal{O})$ be a topological space and 
 let $\alpha \in Z$ be a point on $Z$. 
 Let $\{a_n\}_{n \in \mathbb{N}}$ be a sequence on $Z$ 
 such that for any subsequence $\{a_{n(m)}\}_{m \in \mathbb{N}}$ 
 has a subsequence $\{a_{n(m)(k)}\}_{k \in \mathbb{N}}$ such that 
 $a_{n(m)(k)}$ converges to $\alpha$. Then $a_n$ converges to $\alpha$.
\end{lemma}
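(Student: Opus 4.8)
The plan is to prove the statement by contradiction. Suppose that $a_n$ does not converge to $\alpha$. By the definition of convergence of a sequence in a topological space, there exists an open set $U \in \mathcal{O}$ with $\alpha \in U$ such that for every $N \in \mathbb{N}$ there is some index $n \geq N$ with $a_n \notin U$. Applying this repeatedly, I would select indices $n(1) < n(2) < \cdots$ with $a_{n(m)} \notin U$ for every $m$, thereby extracting a subsequence $\{a_{n(m)}\}_{m \in \mathbb{N}}$ all of whose terms lie in the complement $Z \setminus U$. Note that this subsequence is genuinely infinite, since the negation of convergence supplies a term outside $U$ beyond every index.

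Next, I would apply the hypothesis to this particular subsequence: it has a further subsequence $\{a_{n(m)(k)}\}_{k \in \mathbb{N}}$ that converges to $\alpha$. Since $U$ is an open neighborhood of $\alpha$ and $a_{n(m)(k)} \to \alpha$, there is some $K \in \mathbb{N}$ with $a_{n(m)(k)} \in U$ for all $k \geq K$. But every term $a_{n(m)(k)}$ is in particular a term of the sequence $\{a_{n(m)}\}_{m \in \mathbb{N}}$, hence lies in $Z \setminus U$; in particular $a_{n(m)(K)} \in Z \setminus U$, contradicting $a_{n(m)(K)} \in U$. Therefore the assumption fails, and $a_n$ converges to $\alpha$.

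There is essentially no hard part: the statement is a standard fact of general topology, valid without any countability or metrizability assumption on $Z$ for this direction. The only point requiring a little attention is the first step, ensuring that the failure of convergence really yields an infinite subsequence avoiding the neighborhood $U$; once that is in place, the second step is immediate.
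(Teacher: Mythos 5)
Your proof is correct and follows essentially the same argument as the paper: negate convergence to obtain a neighborhood $U$ of $\alpha$ missed by infinitely many terms, extract the subsequence lying outside $U$, and contradict the hypothesis by noting its convergent sub-subsequence must eventually enter $U$. The only difference is cosmetic — you spell out the final contradiction termwise, where the paper simply observes no sub-subsequence can converge to $\alpha$.
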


\begin{proof}
    We prove this by contradiction. 
    That is, there is an open set $U \in \mathcal{O}$ such that $\alpha \in U$ and for any $N \in \mathbb{N}$, 
    there is some $n \in \mathbb{N}$, $n \geq N$ and $a_n \notin U$. 
    We construct a subsequence
    $\{a_{n(m)}\}_{m \in \mathbb{N}}$ of $\{a_n\}_{n \in \mathbb{N}}$ 
    by induction as follows.
    
    By the assumption, 
 there is $n(1) \in \mathbb{N}$ such that $n(1) \geq 1$ and $a_{n(1)} \notin U$. 
    Also, there is $n(2) \in \mathbb{N}$ such that $n(2) > n(1)$ and $a_{n(2)} \notin U$.
    In the same way, for any $m \in \mathbb{N}$, 
    there is $n(m+1) \in \mathbb{N}$ such that $n(m+1) > n(m)$ and $a_{n(m+1)} \notin U$. 
 Thus, we obtain
 a subsequence $\{a_{n(m)}\}_{m \in \mathbb{N}}$ 
 that does not intersect with $U$. 
    By the construction of 
    $\{a_{n(m)}\}_{m \in \mathbb{N}}$, 
    there is no subsequence of $\{a_{n(m)}\}_{m \in \mathbb{N}}$ that converges to $\alpha$. This is a contradiction.
\end{proof}

\begin{proposition}\label{prop:g-is-continuous}
    $g \colon \partial_o X \rightarrow \partial^c_h X / \sim$
    is continuous.
\end{proposition}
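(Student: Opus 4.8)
The plan is to reduce the statement to a statement about sequences and then exploit properness together with \cref{lem:general-topology}. Since $\partial_o X$ is metrizable by \cref{idealboundary-compact-metric-space}, it suffices to show that $g$ is sequentially continuous. So let $\{x_n\}_{n\in\mathbb N}$ be a sequence in $\partial_o X$ with $x_n\to x$; I must show $g(x_n)\to g(x)$ in $\partial^c_h X/\sim$. By \cref{lem:general-topology} it is enough to check that \emph{every} subsequence of $\{x_n\}$ admits a further subsequence $\{x_{n(k)}\}$ with $g(x_{n(k)})\to g(x)$; after relabelling, I may assume I am simply given $x_n\to x$ and must produce such a subsequence. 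The reason one cannot argue with the original sequence directly — and the main obstacle of the proof — is that the extended bicombing $\rp{\bar\Gamma}$ is only \emph{coarsely} canonical, so the Busemann functions $b_{x_n}$ need not converge to $b_x$; they will only be shown to stay within bounded distance of $b_x$, which is precisely why passing to a subsequence and invoking \cref{lem:general-topology} is unavoidable.

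First I would translate the convergence $x_n\to x$ into control on the quasi-geodesic rays $\gamma_{x_n}=\rp{\bar\Gamma}(o,x_n,-)$ and $\gamma_x=\rp{\bar\Gamma}(o,x,-)$. By the definition of the neighbourhood basis $\{V_n[x]\}$, $x_n\to x$ means $(x_n\:|\:x)_o\to\infty$ for the extended Gromov product, so by \cref{lem:gromov-product}(1) also $(\gamma^{x_n}_o\:|\:\gamma^x_o)_o\to\infty$, and then \cref{lem:gromov-product}(4) gives, for each $t\in\mathbb N$ and every sufficiently large $n$, the estimate $d(\gamma_{x_n}(t),\gamma_x(t))\le\Omega$. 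Since $\gamma_{x_n}$ is a $(\lambda,k_1)$-quasi-geodesic ray by \cref{lem:combing-at-infinity}(iv), we have $d(o,\gamma_{x_n}(t))\le\lambda t+k_1$, so properness of $X$ makes $\{\gamma_{x_n}(t)\}_n$ relatively compact for each $t\in\mathbb N$. A diagonal argument over $t\in\mathbb N$ then yields a subsequence $\{x_{n(k)}\}$ and points $\gamma(t)\in X$ with $\gamma_{x_{n(k)}}(t)\to\gamma(t)$ for every $t\in\mathbb N$, and passing to the limit in the previous estimate gives $d(\gamma(t),\gamma_x(t))\le\Omega$ for all $t$.

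Finally I would set $\beta(u):=-d(o,u)+d(\rp\Gamma(o,u,\lfloor d(o,u)\rfloor),\gamma(\lfloor d(o,u)\rfloor))$ and show $b_{x_{n(k)}}\to\beta$ uniformly on compact sets: on a ball $\bar B(o,R)$ the integer $\lfloor d(o,u)\rfloor$ takes only the values $0,1,\dots,\lfloor R\rfloor$, and $|b_{x_{n(k)}}(u)-\beta(u)|\le d(\gamma_{x_{n(k)}}(\lfloor d(o,u)\rfloor),\gamma(\lfloor d(o,u)\rfloor))\le\max_{0\le t\le\lfloor R\rfloor}d(\gamma_{x_{n(k)}}(t),\gamma(t))\to 0$. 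Since each $b_{x_{n(k)}}\in\partial^c_h X\subset\cl\psi(X)$ and $\cl\psi(X)$ is closed, $\beta\in\cl\psi(X)$; moreover $|\beta(u)-b_x(u)|\le d(\gamma(\lfloor d(o,u)\rfloor),\gamma_x(\lfloor d(o,u)\rfloor))\le\Omega$ for all $u$, so $\beta\sim b_x$. As $b_x\notin B_{b,\lambda,E}(X)$ by \cref{prop:qg-g-is-well-defined} and $B_{b,\lambda,E}(X)$ is stable under adding a bounded function, $\beta\notin B_{b,\lambda,E}(X)$, hence $\beta\in\partial^c_h X$. Therefore $b_{x_{n(k)}}\to\beta$ in $\partial^c_h X$, and applying the continuous projection $\pi$ together with $\beta\sim b_x$ yields $g(x_{n(k)})=\pi(b_{x_{n(k)}})\to\pi(\beta)=\pi(b_x)=g(x)$. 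This is the required subsequence, so \cref{lem:general-topology} finishes the proof. The only genuinely delicate point is the passage from the coarse closeness $d(\gamma_{x_n}(t),\gamma_x(t))\le\Omega$ to an honest limit $\gamma$, which is what forces the properness/diagonal step and the use of \cref{lem:general-topology}; everything else is bookkeeping with the cone-metric Busemann functions.
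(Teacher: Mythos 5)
Your proof is correct, and it follows the same overall architecture as the paper — reduce to sequences, apply \cref{lem:general-topology}, use properness to extract a convergent subsequence of the rays $\gamma_{x_n}$, pass to a limiting Busemann-type function, and show it is a horofunction equivalent to $b_x$ — but you execute the two central verifications by different means. Where the paper extracts (via \cite[Proposition 4.17]{FO17}) a subsequence of rays converging uniformly on compact sets to a quasi-geodesic ray $\tilde\gamma$, you only need pointwise convergence at integer times, obtained by a diagonal argument; this is enough because $b_x$ samples the ray only at $\lfloor d(o,u)\rfloor$, so your limit object $\gamma$ need not even be shown to be a quasi-geodesic. More substantially, you establish $\sup_u|\beta(u)-b_x(u)|\le\Omega$ \emph{a priori}, by combining \cref{lem:gromov-product}(1) and (4) to get $d(\gamma_{x_n}(t),\gamma_x(t))\le\Omega$ for each $t$ and large $n$ and passing this bound to the limit, whereas the paper proves boundedness of $b_{\tilde\gamma}-b_x$ only at the end, by a contradiction argument that re-invokes the convexity inequality \cref{item:cconvex}; your route is more direct and gives an explicit bound ($\Omega$ versus the paper's $D_1+C$-flavoured estimate). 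Finally, for membership of the limit in $\partial^c_h X$ you observe that $B_{b,\lambda,E}(X)$ is stable under bounded perturbations (which is immediate from its definition, replacing $\beta$ by $\beta+\sup|h|$) and quote $b_x\notin B_{b,\lambda,E}(X)$ from \cref{prop:qg-g-is-well-defined}, while the paper re-derives the divergence estimate of that proposition for $b_{\tilde\gamma}$; again your argument is shorter and reuses what is already proved. The only point worth making explicit in a write-up is the justification that convergence in $\partial_o X$ means $(x_n\:|\:x)_o\to\infty$ and that sequential continuity suffices because $\partial_o X$ is metrizable (\cref{idealboundary-compact-metric-space}); the paper glosses over this in the same way, so nothing is lost.
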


\begin{proof}
    We take $x \in \partial_o X$ arbitrarily, and we take a sequence 
    $\{x_n\}_{n \in \mathbb{N}} \subset \partial_o X$ 
    arbitrarily such that 
    $\liminf_{n \rightarrow \infty}(x_n \:|\: x)_o = \infty$. 
    By \cref{prop:qg-g-is-well-defined}, we have
    $b_{x} \in \partial^c_h X$ and $b_{x_n} \in \partial^c_h X$ 
    for any $n \in \mathbb{N}$.
    Then we show $\lim_{n \rightarrow \infty}\pi(b_{x_n}) = \pi(b_x)$.

    Set $\gamma_x(-) \coloneqq \text{rp}\bar{\Gamma}(o, x, -)$ and 
    $\gamma_{x_n}(-) \coloneqq \text{rp}\bar{\Gamma}(o, x_n, -)$ for any $n \in \mathbb{N}$.
     By \cref{lem:gromov-product}, we have 
    $\liminf_{n \rightarrow \infty}(\gamma_{x_n} \:|\: \gamma_x)_o = \infty$.
    
    We take any subsequence 
    $\{\pi(b_{x_{n(m)}})\}_{m \in \mathbb{N}}$ of 
    $\{\pi(b_{x_n})\}_{n \in \mathbb{N}}$.
    Correspondingly, we  take a sequence of $(\lambda, k_1)$-quasi geodesic rays
    $\{\gamma_{x_{n(m)}}\}_{m \in \mathbb{N}}$ where 
    $k_1 = \lambda + k$.
    By the properness of $X$ and induction, 
    we can take a subsequence 
    $\{\gamma_{x_{n(m)(l)}}\}_{l \in \mathbb{N}}$ 
    of $\{\gamma_{x_{n(m)}}\}_{m \in \mathbb{N}}$ 
    such that $\{\gamma_{x_{n(m)(l)}}\}_{l \in \mathbb{N}}$ 
    converges uniformly to some $(\lambda, k_1)$-quasi geodesic ray $\tilde{\gamma}$ on compact sets.
    For details, see \cite[Proposition 4.17]{FO17}.
    We define $b_{\tilde{\gamma}} \colon X \rightarrow \mathbb{R}$ as follows
    \[
    u \mapsto -d(o, u) + d(\texttt{rp}\Gamma(o, u, \lfloor d(o, u) \rfloor), \tilde{\gamma}(\lfloor d(o, u) \rfloor))
    \]
    for any $u \in X$.
    Then, 
    for any $R > 0$, we have
    \begin{align*}
        &\sup_{u \in \bar{B}(o, R)}|b_{x_{n(m)(l)}}(u) - b_{\tilde{\gamma}}(u)|\\
        &= \sup_{u \in \bar{B}(o, R)}|\{-d(o, u) + d(\texttt{rp}\Gamma(o, u, \lfloor d(o, u) \rfloor), \gamma_{x_{n(m)(l)}}(\lfloor d(o, u) \rfloor))\}\\
        &\phantom{ = \quad}- \{-d(o, u) + d(\texttt{rp}\Gamma(o, u, \lfloor d(o, u) \rfloor), \tilde{\gamma}(\lfloor d(o, u)\rfloor)\}|\\
        &\leq \sup_{u \in \bar{B}(o, R)}d(\gamma_{x_{n(m)(l)}}(\lfloor d(o, u) \rfloor), \tilde{\gamma}(\lfloor d(o, u) \rfloor))\\
        &\leq \max_{t \in \{1, \cdots ,\lfloor R \rfloor\}}d(\gamma_{x_{n(m)(l)}}(t), \tilde{\gamma}(t)) \rightarrow 0 \text{ as } l \rightarrow \infty.
    \end{align*}
    Thus, $\{b_{x_{n(m)(l)}}\}_{l \in \mathbb{N}}$ converges uniformly to $b_{\tilde{\gamma}}$ on compact sets.
    Since $b_{x_{n(m)(l)}} \in \partial^c_h X \subset \cl\psi(X)$ for any $l \in \mathbb{N}$, we have 
    $\lim_{l \rightarrow \infty}b_{x_{n(m)(l)}} = b_{\tilde{\gamma}} \in \cl\psi(X)$.
    Also, we have $b_{\tilde{\gamma}} \notin B_{b, \lambda, E}$.
    We take $R > 0$ and $y \in X$ arbitrarily. 
    By the proof of \cref{prop:qg-g-is-well-defined}, we have 
    \[
    \sup_{u \in \bar{B}(o, \lambda R+ k)}\frac{1}{\lambda E}\psi_y(u) - b_{x_{n(m)(l)}}(u) \geq \left(\frac{1}{\lambda E} + 1 \right)\left(\frac{1}{\lambda}R - k \right) - \frac{2}{\lambda E}d(o, y) - \lambda(\theta(0) + 2) - 3k
    \]
    for any $l \in \mathbb{N}$.
    Since $b_{x_{n(m)(l)}}$ converges uniformly to $b_{\tilde{\gamma}}$ on compact sets, we have
    \[
    \sup_{u \in \bar{B}(o, \lambda R+ k)}\frac{1}{\lambda E}\psi_y(u) - b_{\tilde{\gamma}}(u) \geq \left(\frac{1}{\lambda E} + 1 \right)\left(\frac{1}{\lambda}R - k \right) - \frac{2}{\lambda E}d(o, y) - \lambda(\theta(0) + 2) - 3k
    \]
    for any $R > 0$ and $y \in X$.
    This means $b_{\tilde{\gamma}} \notin B_{b, \lambda, E}(X)$
    and we have $b_{\tilde{\gamma}} \in \partial^c_h X$.
    
    We show $\pi(b_{\tilde{\gamma}}) = \pi(b_x)$, 
    that is $\sup_{u \in X}|b_{\tilde{\gamma}}(u) - b_x(u)| < \infty$.
    We prove this by contradiction.
    We assume that 
    \[
    \sup_{u \in X}|b_{\tilde{\gamma}}(u) - b_x(u)| = \sup_{u \in X}\lim_{l \rightarrow \infty}|b_{x_{n(m)(l)}}(x) - b_x(u)| = \infty.
    \]
    For any $M > D_1 + C$, there is $x_M \in X$ such that for any $L \in \mathbb{N}$, there is some $l \in \mathbb{N}$, $l \geq L$ such that
    \[
    |b_{\tilde{\gamma}}(x_M) - b_{x_{n(m)(l)}}(x_M)| 
    = |d(x_M, \tilde{\gamma}(d(o,x_M))) - d(x_M, \gamma_{x_{n(m)(l)}}(d(o, x_M)))| 
    > M.
    \]
    By the triangle inequality, for any $L \in \mathbb{N}$ there is some $l \in \mathbb{N}$ such that 
    $l \geq L$ and we have
    \[
    d(\tilde{\gamma}(d(o, x_M)), \gamma_{x_{n(m)(l)}}(d(o, x_M))) > M > D_1 + C.
    \]
    On the other hand, since $\{\gamma_{x_{n(m)(l)}}\}_{l \in \mathbb{N}}$ is a subsequence of 
    $\{\gamma_{x_n}\}_{n \in \mathbb{N}}$, we have
    \[
    \liminf_{l \rightarrow \infty}(\gamma_{x_{n(m)(l)}} \:|\: \gamma_x)_o = \infty.
    \]
    Thus, there is some $\tilde{L} \in \mathbb{N}$ such that for any $\tilde{l} \in \mathbb{N}$, $\tilde{l} \geq \tilde{L}$, there is $t_M > d(o, x_M)$ such that
    $d(\gamma_{x_{n(m)(\tilde{l})}}(t_M), \gamma_x(t_M)) \leq D_1$. Thus, we have
    \begin{align*}
        d(\gamma_{x_{n(m)(\tilde{l})}}(d(o, x_M)), \gamma_x(d(o, x_M)) 
        &= d(\texttt{rp}\bar{\Gamma}(o, x_{n(m)(\tilde{l})}, d(o, x_M)), \texttt{rp}\bar{\Gamma}(o, x, d(o, x_M)))\\
        &\leq \frac{d(o, x_M)}{t_M}d(\texttt{rp}\bar{\Gamma}(o, x_{n(m)(\tilde{l})}, t_M), \texttt{rp}\bar{\Gamma}(o, x, t_M)) + C \\
        &= \frac{d(o, x_M)}{t_M}d(\gamma_{x_{n(m)(\tilde{l})}}(t_M), \gamma_x(t_M)) + C\\
        &\leq D_1 + C < M.
    \end{align*}
    This is a contradiction. So we have
    \[
    \sup_{u \in X}|b_{\tilde{\gamma}}(u) - b_x(u)| 
    <\infty.
    \]
    In particular, we have
    \[
    \lim_{l \rightarrow \infty}\pi(b_{x_{n(m)(l)}}) 
    = \pi(\lim_{l \rightarrow \infty}b_{x_{n(m)(l)}})
    = \pi(b_{\tilde{\gamma}})
    = \pi(b_x).
    \]
    From above, any subsequence $\{\pi(b_{x_{n(m)}})\}_{m \in \mathbb{N}}$ of $\{\pi(b_{x_n})\}_{n \in \mathbb{N}}$ has a subsequence which converges to $\pi(b_x)$. By \cref{lem:general-topology}, we have 
    $\lim_{n \rightarrow \infty}\pi(b_{x_n}) = \pi(b_x)$.
\end{proof}

Combining \cref{prop:qg-Pr-is-continuous,prop:g-is-inverse-of-f,prop:g-is-continuous}, we complete the proof of \cref{thm-Intro:reduced-horobdry-equal-ideal-bdry}.


\section*{Acknowledgement}
I would like to thank Professor Tomohiro Fukaya for giving me tremendous opportunities for growth through weekly instruction and off-campus placements over the past four years.

I would like to thank Dr. Kenshiro Tashiro for providing us with very useful information on horoboundary and for his very careful guidance.

I would also like to thank Dr. Yoshito Ishiki, Dr. Yuya Kodama, 
and Dr. Takumi Matsuka for their constant encouragement.

Finally, I would like to thank my parents for allowing me to study for a total of six years at the university and graduate school.

\bibliographystyle{alpha}
\bibliography{cc-horobundary}

\begin{thebibliography}{AFGG22}

\bibitem[AFGG22]{AFGG22}
Leandro Arosio, Matteo Fiacchi, Sebastien Gontard, and Lorenzo Guerini.
\newblock The horofunction boundary of a gromov hyperbolic space.
\newblock {\em Mathematische Annalen}, 388:1--42, 12 2022.

\bibitem[And08]{And08}
P.~D. Andreev.
\newblock Geometry of ideal boundaries of geodesic spaces with nonpositive curvature in the sense of busemann.
\newblock {\em Siberian Advances in Mathematics}, 18(2):95--102, 2008.

\bibitem[And18]{And18}
Pavel Andreev.
\newblock The cone metric of a busemann space.
\newblock {\em Journal of Geometry}, 109(1):25, 2018.

\bibitem[FM24]{fukaya2024boundaryfreeproductsmetric}
Tomohiro Fukaya and Takumi Matsuka.
\newblock Boundary of free products of metric spaces.
\newblock {\em arXiv:2402.06862}, 2024.

\bibitem[FO20]{FO17}
Tomohiro Fukaya and Shin-ichi Oguni.
\newblock A coarse {C}artan-{H}adamard theorem with application to the coarse {B}aum-{C}onnes conjecture.
\newblock {\em J. Topol. Anal.}, 12(3):857--895, 2020.

\bibitem[FOY22]{FOY22}
Tomohiro Fukaya, Shin-ichi Oguni, and Takamitsu Yamauchi.
\newblock Coarse compactifications and controlled products.
\newblock {\em Journal of Topology and Analysis}, 14(04):875--900, 2022.

\bibitem[Gro81]{Gromov+1981+183+214}
M.~Gromov.
\newblock {\em Hyperbolic Manifolds, Groups and Actions}, pages 183--214.
\newblock Princeton University Press, Princeton, 1981.

\bibitem[WW03]{WW03}
Corran Webster and Adam~Jeremiah Winchester.
\newblock Boundaries of hyperbolic metric spaces.
\newblock {\em Pacific Journal of Mathematics}, 221:147--158, 2003.

\end{thebibliography}
\end{document}